\renewcommand\theequation{\thesection.\arabic{equation}}
\newcommand{\BA}{{\mathbb {A}}}
\newcommand{\BC}{{\mathbb {C}}}
\newcommand{\BR}{{\mathbb {R}}}
\newcommand{\CA}{{\mathcal {A}}}
\newcommand{\CC}{{\mathcal {C}}}
\renewcommand{\CD}{{\mathcal {D}}}
\newcommand{\CE}{{\mathcal {E}}}
\newcommand{\CN}{{\mathcal {N}}}
\newcommand{\CS}{{\mathcal {S}}}
\newcommand{\cusp}{{\mathrm{cusp}}}
\newcommand{\disc}{{\mathrm{disc}}}
\newcommand{\GL}{{\mathrm{GL}}}
\newcommand{\Ind}{{\mathrm{Ind}}}
\newcommand{\Res}{{\mathrm{Res}}}
\newcommand{\Sym}{{\mathrm{Sym}}}
\newcommand{\Sp}{{\mathrm{Sp}}}
\newcommand{\tr}{{\mathrm{tr}}}
\newcommand{\wt}{\widetilde}
\newcommand{\ol}{\overline}
\newcommand{\ul}{\underline}
\newcommand{\bs}{\backslash}
\def\bks{{\backslash}}
\def\diag{{\rm diag}}
\def\lam{{\lambda}}
\newtheorem{thm}{Theorem}[section]
\newtheorem{lem}[thm]{Lemma}
\newtheorem{prop}[thm]{Proposition}
\newtheorem {conj}[thm]{Conjecture}
\newtheorem {ques/conj}[thm]{Question/Conjecture}
\newtheorem{rmk}[thm]{Remark}
\newcommand{\Rmnum}[1]{\expandafter\@slowromancap\romannumeral #1@}
\begin{document}
\renewcommand{\theequation}{\arabic{equation}}
\numberwithin{equation}{section}

\title[E\MakeLowercase{xtension of }G\MakeLowercase{inzburg}-J\MakeLowercase{iang}-S\MakeLowercase{oudry correspondence}]
{O\MakeLowercase{n extension of }G\MakeLowercase{inzburg}-J\MakeLowercase{iang}-S\MakeLowercase{oudry correspondence to certain automorphic forms on} $Sp_{4mn}(\BA)$ \MakeLowercase{and} $\wt{Sp}_{4mn \pm 2n}(\BA)$}

\author{B\MakeLowercase{aiying} L\MakeLowercase{iu}}
\address{Department of Mathematics\\
University of Utah\\
155 S 1400 E Room 233, Salt Lake City, UT 84112-0090, USA.}
\email{liuxx969@umn.edu}

%    General info
\subjclass[2000]{Primary 22E55; Secondary 11F70}

\date{August, 2013}

%\dedicatory{}

\keywords{Fourier coefficients, global Arthur parameters, automorphic forms}

\begin{abstract}
Let $F$ be a number field, and $\BA=\BA_F$.
In this paper, first, we provide a family of global Arthur parameters confirming all parts of a general conjecture on the relation between the structure of Fourier coefficients and
the structure of global Arthur parameters, given by Jiang in 2012. Then we extend a correspondence between certain automorphic forms on $Sp_{4n}(\BA)$ and $\wt{Sp}_{2n}(\BA)$, given by Ginzburg, Jiang and Soudry in 2012, to certain automorphic forms on $Sp_{4mn}(\BA)$ and $\wt{Sp}_{4mn \pm 2n}(\BA)$, using the same idea of considering compositions of automorphic descent maps.
\end{abstract}

\maketitle

\section{Introduction}

Let $F$ be a number field, and $\BA=\BA_F$.
Fourier coefficients play important roles in the study automorphic forms.
For example, a basic and fundamental result in the theory of automorphic forms
for $GL_n(\BA)$ is that cuspidal automorphic forms are globally generic,
that is, have non-vanishing
Whittaker-Fourier coefficients, due to Shalika (\cite{S74}) and Piatetski-Shapiro (\cite{PS79}) independently.
This result has been extended to the discrete spectrum of $GL_n(\BA)$
in \cite{JL13a}.
As another example, for classical groups, Ginzburg, Rallis and Soudry (\cite{GRS11})
developed the theory of automorphic descent by studying certain Fourier coefficients
of special type residual representations, producing the inverse of special cases of Langlands functorial transfers from classical groups to the general linear groups.
The idea of automorphic descent has been generalized to explicit
constructions of endoscopy transfers for classical groups which can be found
in \cite{G12} and \cite{J12}.

For general connected reductive groups, there is a framework
of attaching Fourier coefficients to nilpotent orbits (for symplectic groups, see \cite{GRS03}, \cite{JL13b}). 
For classical groups $G$, nilpotent orbits are parametrized
by partitions and certain non-degenerate quadratic forms (\cite{W01}).
For any irreducible automorphic representation $\pi$ of $G(\BA)$, let 
$\mathfrak{p}^m(\pi)$ be the set of maximal partitions (under the natural ordering of partitions) providing non-vanishing 
Fourier coefficients for $\pi$ (for precise definition, see \cite{J12} and \cite{JL13b}). 

On the other hand, given a general connected reductive group $G$, a main theme in the 
theory of automorphic forms is to study the discrete spectrum, which consists of
cuspidal spectrum and residual spectrum.
For $G=GL_n$, the residual spectrum was constructed explicitly by
Moeglin and Waldspurger (\cite{MW89}). For symplectic and special orthogonal
groups, the discrete spectrum was classified by Arthur (\cite{Ar13}) up to automorphic $L^2$-packets parametrized by global Arthur parameters. 

Towards understanding the Fourier coefficients information of members in the automorphic $L^2$-packets, Jiang (\cite{J12}) made a conjecture, which relates the structure of the global Arthur parameter of an irreducible automorphic representation $\pi$ which is in the discrete spectrum, to the structure of $\mathfrak{p}^m(\pi)$. We recall the symplectic case of this conjecture as follows. Let $G_n = Sp_{2n}$, with symplectic form 
$$\begin{pmatrix}
0 & v_n\\
-v_n & 0
\end{pmatrix},$$
where $v_n$ is an $n \times n$ matrix with $1$'s on the second diagonal and $0$'s elsewhere. Fix a Borel subgroup $B=TU$, where the maximal torus $T$ consists of elements of the following form
$$
\diag(t_1,\cdots,t_n;t_n^{-1},\cdots,t_1^{-1})
$$
and the unipotent radical $U$ consists of all upper unipotent matrices in $G_n$.

The set of global Arthur parameters for the discrete spectrum of $G_n=Sp_{2n}$ is denoted by $\wt{\Psi}_2(Sp_{2n})$, the elements of which
are of the form
\begin{equation}\label{psin}
\psi:=\psi_1\boxplus\psi_2\boxplus\cdots\boxplus\psi_r,
\end{equation}
where $\psi_i$ are pairwise different simple global Arthur parameters of orthogonal type and have the form
$\psi_i=(\tau_i,b_i)$ with $\tau_i\in\CA_\cusp(a_i)$, $2n+1 = \sum_{i=1}^r a_ib_i$ (the dual group of $Sp_{2n}$ is $SO_{2n+1}(\BC)$),
and $\prod_i \omega_{\tau_i}^{b_i} = 1$ (the condition
on the central characters of the parameter), following \cite[Section 1.4]{Ar13}. More precisely, $\psi_i=(\tau_i,b_i)$
satisfies the following conditions:
if $\tau_i$ is of symplectic type (i.e., $L(s, \tau_i, \wedge^2)$ has a pole at $s=1$), then $b_i$ is even; if $\tau_i$ is of orthogonal type (i.e., $L(s, \tau_i, \Sym^2)$ has a pole at $s=1$), then $b_i$ is odd.

\begin{thm}[Theorem 1.5.2, \cite{Ar13}] For each global Arthur parameter $\psi\in\wt{\Psi}_2(Sp_{2n})$ there defines a global Arthur
packet $\wt{\Pi}_\psi$. The discrete spectrum of $Sp_{2n}(\BA)$ has the following decomposition
$$
L^2_\disc(Sp_{2n}(F)\bks Sp_{2n}(\BA))
\cong\oplus_{\psi\in\wt{\Psi}_2(Sp_{2n})}\oplus_{\pi\in\wt{\Pi}_\psi(\epsilon_\psi)}m_\psi\pi,
$$
where $\wt{\Pi}_\psi(\epsilon_\psi)$ denotes the subset of $\wt{\Pi}_\psi$ consisting of members which occur in the
discrete spectrum.
\end{thm}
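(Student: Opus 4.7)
The plan is to follow Arthur's endoscopic classification strategy, which proceeds by simultaneously comparing the stable trace formula for $G_n = Sp_{2n}$ with the stable twisted trace formula for $GL_{2n+1}$ relative to the outer automorphism $\theta:g\mapsto v_{2n+1}{}^t\!g^{-1}v_{2n+1}^{-1}$. The dual group $\widehat{G}_n = SO_{2n+1}(\BC)$ embeds into $GL_{2n+1}(\BC)$ as the group fixed by the dual automorphism, so global parameters $\psi\in\wt{\Psi}_2(Sp_{2n})$ are the self-dual parameters for $GL_{2n+1}$ that factor through $\widehat{G}_n$, together with the central character condition $\prod_i \omega_{\tau_i}^{b_i}=1$. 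The basic objects on the $GL$-side are self-dual isobaric automorphic representations whose existence comes from Moeglin--Waldspurger \cite{MW89}, and whose $\theta$-twisted characters define stable distributions on $\widetilde{GL}_{2n+1}(\BA)$.

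First, I would stabilize both trace formulas: the ordinary invariant trace formula for $Sp_{2n}$ needs to be rewritten in terms of stable distributions on $Sp_{2n}$ and its elliptic endoscopic groups $Sp_{2n'}\times SO_{2n''}$ with $n'+n''=n$; the twisted invariant trace formula for $(GL_{2n+1},\theta)$ must similarly be expanded against twisted endoscopic data of which $Sp_{2n}$ (together with products of smaller symplectic and even orthogonal groups) is the principal one. The fundamental lemma of Ngô, together with its weighted and twisted variants established by Chaudouard--Laumon and Waldspurger, guarantees that the transfer factors yield matching orbital integrals, so the geometric sides can be identified inductively.

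Next, I would run the standard model induction: assume the theorem for all classical groups of rank less than $n$. Applying the stabilization identity to a well-chosen test function that isolates a cuspidal datum attached to a formal parameter $\psi$ reduces the equality of spectral sides to a local identity between the stable character of the packet $\wt{\Pi}_\psi$ and the twisted character on $GL_{2n+1}$. These local packets are constructed place-by-place: at archimedean places by Shelstad's real endoscopy, at $p$-adic places from the local Langlands correspondence for $GL_N$ combined with local intertwining relations controlled by the normalized self-intertwining operators on induced representations attached to $\psi_v$. The local intertwining relation, which forces the sign characters $\epsilon_\psi$ on the component groups $\mathcal{S}_\psi$ to appear in the multiplicity formula, is established by a delicate induction that must be run in parallel with the global decomposition; this interleaving is the main technical obstacle.

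Finally, the decomposition $L^2_{\disc}\cong \bigoplus_\psi \bigoplus_{\pi\in\wt{\Pi}_\psi(\epsilon_\psi)} m_\psi \pi$ is obtained by identifying the $\psi$-isotypic part of the discrete spectrum with the distributions extracted from the stabilization, where $m_\psi\in\{1,2\}$ records whether the parameter is of general or exceptional type in Arthur's sense. The hardest step is not any single computation but the consistent simultaneous induction on $n$, on the length $r$ of $\psi$, and on the depth of the local packets; all three inductions must close together in order to deduce the multiplicity formula and, in particular, to verify that only members with $\langle \cdot,\pi\rangle = \epsilon_\psi$ contribute, and each with the predicted multiplicity $m_\psi$.
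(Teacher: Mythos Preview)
This statement is not proved in the paper at all: it is quoted verbatim as Theorem 1.5.2 of \cite{Ar13} and serves only as background for Conjecture \ref{cubmfc}. The paper gives no proof and does not attempt one, so there is nothing to compare your proposal against.

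Your sketch is a reasonable high-level summary of Arthur's strategy in \cite{Ar13} --- stabilization of the twisted trace formula for $(GL_{2n+1},\theta)$, Ng\^o's fundamental lemma and its extensions, the simultaneous induction establishing local packets and the local intertwining relation, and the resulting multiplicity formula --- but it is an outline, not a proof. Each of the steps you name (stabilization, the local intertwining relation, the interlocking inductions) occupies hundreds of pages in Arthur's monograph, and the ``delicate induction'' you allude to is precisely the content of the book. For the purposes of this paper you should simply cite \cite{Ar13} rather than attempt a proof.
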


As in \cite{J12}, $\wt{\Pi}_\psi(\epsilon_\psi)$ is called the automorphic $L^2$-packet attached to $\psi$. For $\psi$ of the form in \eqref{psin}, let $\underline{p}(\psi)=[(b_1)^{(a_1)}\cdots(b_r)^{(a_r)}]$.
For $\pi\in\wt{\Pi}_\psi(\epsilon_\psi)$, the structure of the global Arthur parameter $\psi$ deduces constraints on
the structure of $\frak{p}^m(\pi)$, which is given by the following conjecture.

\begin{conj}[Conjecture 4.2, \cite{J12}]\label{cubmfc}
For any $\psi\in\wt{\Psi}_2(Sp_{2n})$, let $\wt{\Pi}_{\psi}(\epsilon_\psi)$ be the automorphic $L^2$-packet attached to $\psi$.
Then the following hold.
\begin{enumerate}
\item[(1)] Any symplectic partition $\ul{p}$ of $2n$, if $\ul{p}>\eta_{{\frak{g}^\vee,\frak{g}}}(\underline{p}(\psi))$, does
not belong to $\frak{p}^m(\pi)$ for any $\pi\in\wt{\Pi}_{\psi}(\epsilon_\psi)$.
\item[(2)] For a $\pi\in\wt{\Pi}_{\psi}(\epsilon_\psi)$, any partition $\ul{p}\in\frak{p}^m(\pi)$ has the property that
$\ul{p}\leq \eta_{{\frak{g}^\vee,\frak{g}}}(\ul{p}(\psi))$.
\item[(3)] There exists at least one member $\pi\in\wt{\Pi}_{\psi}(\epsilon_\psi)$ having the property that
$\eta_{{\frak{g}^\vee,\frak{g}}}(\ul{p}(\psi))\in \frak{p}^m(\pi)$.
\end{enumerate}
Here $\eta_{{\frak{g}^\vee,\frak{g}}}$ denotes the Barbasch-Vogan duality map
$($see \cite[Definition A1]{BV85} and \cite[Section 3.5]{Ac03}$)$ from the partitions for the dual group $G^\vee(\BC)$ to
the partitions for $G$.
\end{conj}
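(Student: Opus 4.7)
The plan is to attack the conjecture for a structured family of parameters $\psi = \psi_1 \boxplus \cdots \boxplus \psi_r \in \wt\Psi_2(Sp_{2n})$ by separating the upper-bound assertions (1)--(2) from the existence assertion (3), and inducting on $r$ together with $\sum_i a_i b_i$. The architectural principle is to realise members of $\wt\Pi_\psi(\epsilon_\psi)$ as residues of Eisenstein series built from Speh representations $\Delta(\tau_i,b_i)$, and then to transport Fourier coefficient computations through these constructions using root-exchange identities and Jacquet module analysis of Ginzburg-Rallis-Soudry type.

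For parts (1) and (2), given any $\pi \in \wt\Pi_\psi(\epsilon_\psi)$ and a partition $\ul p$ of $2n$ with $\ul p > \eta_{\frak{g}^\vee, \frak{g}}(\ul p(\psi))$, I would integrate $\pi$ against the Fourier character attached to $\ul p$, then swap this integral through the Eisenstein construction realising $\pi$. Standard root-exchange lemmas reduce the resulting expression to a period on the Levi subgroup indexing the Speh data; the combinatorics of Barbasch-Vogan duality, combined with the dimensional constraints built into $\Delta(\tau_i,b_i)$, forces this period to vanish, yielding $\ul p \notin \frak{p}^m(\pi)$ and hence the containment $\frak{p}^m(\pi) \subseteq \{\ul q : \ul q \leq \eta_{\frak{g}^\vee,\frak{g}}(\ul p(\psi))\}$. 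For part (3), I would construct an explicit witness by composing automorphic descent maps in the spirit of the paper's extension of Ginzburg-Jiang-Soudry: starting from a Speh residual representation on the appropriate $\GL$-type group, a chain of Eisenstein residues and descent operations produces a representation on $Sp_{4mn}(\BA)$ or $\wt{Sp}_{4mn \pm 2n}(\BA)$, with the maximal Fourier partition tracked at each stage by a Fourier coefficient identity, and nonvanishing established through an unramified local calculation together with a nonzero period argument at one auxiliary place.

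The main obstacle will lie in part (3), where the combinatorial compatibility between iterated descent and the Barbasch-Vogan duality $\eta_{\frak{g}^\vee, \frak{g}}$ must be verified. Each descent step acts on partitions by an explicit rule (strip a column, then collapse to the nearest partition of the correct type), and one must prove by induction on the depth of the composition that the cumulative effect reproduces $\eta_{\frak{g}^\vee, \frak{g}}(\ul p(\psi))$ for the family under consideration. A further subtlety is that the iterated descent must produce a representation lying in the specific packet $\wt\Pi_\psi(\epsilon_\psi)$ rather than in a neighbouring one; since Arthur's classification identifies representations only up to packet, this will require matching local unramified parameters at almost every finite place together with a global argument using the distinguished generic member of each packet. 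I expect that, once the combinatorial identity at the level of partitions is established, the remaining verifications can be reduced to the single-summand case already handled by Ginzburg-Jiang-Soudry and patched together by the $\boxplus$-compatibility of descent with the induction hypothesis.
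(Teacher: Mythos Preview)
The statement you are addressing is a \emph{conjecture}, and the paper does not claim to prove it in general. What the paper actually does is confirm all three parts of the conjecture only for the specific family of Arthur parameters
\[
\psi = (\tau, 2m) \boxplus (1_{GL_1}, 1),
\]
where $\tau$ is an irreducible unitary cuspidal representation of $GL_{2n}(\BA)$ with $L(s,\tau,\wedge^2)$ having a pole at $s=1$ and $L(\tfrac12,\tau)\neq 0$. For this family, parts (1) and (2) are not proved here at all: they are imported from Theorem~1.3, Proposition~6.4 and Remark~6.5 of \cite{JL13c}. Part (3) is established by exhibiting the single residual representation $\CE_{\Delta(\tau,m)}$ and proving (Theorem~\ref{main1}) that $\frak{p}^m(\CE_{\Delta(\tau,m)}) = [(2n)^{2m}] = \eta_{\frak{g}^\vee,\frak{g}}(\ul p(\psi))$. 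The proof of Theorem~\ref{main1} is an induction on $m$ (not on the number $r$ of summands), carried out by a sequence of root-exchange manipulations that reduce the $[(2n)^{2m}]$-Fourier coefficient to a Whittaker coefficient of $\tau$ times a $[(2n)^{2m-2}]$-coefficient of $\CE_{\Delta(\tau,m-1)}$, via an explicit constant-term computation along $P_{2n}$ (Lemma~\ref{constantterm}).

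Your proposal, by contrast, is an outline for the full conjecture for arbitrary $\psi = \psi_1 \boxplus \cdots \boxplus \psi_r$. This is substantially more than the paper attempts, and there are genuine gaps in the plan. First, for parts (1)--(2) you assume that an arbitrary $\pi \in \wt\Pi_\psi(\epsilon_\psi)$ can be realised via an Eisenstein construction from Speh data; this is not known in general, and many packet members (in particular the non-tempered but cuspidal ones) do not visibly arise this way. Second, for part (3) the proposed chain of descents and Eisenstein residues would need, at each step, a nonvanishing input compatible with the next summand $(\tau_i,b_i)$; the paper's construction already requires the auxiliary hypothesis $L(\tfrac12,\tau)\neq 0$ even in the single-$\tau$ case, and for a general $\boxplus$-sum no analogue of this nonvanishing is available. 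Third, your claimed combinatorial compatibility between iterated descent and the Barbasch--Vogan map $\eta_{\frak{g}^\vee,\frak{g}}$ for arbitrary $\ul p(\psi)$ is precisely the open content of the conjecture: it is not a routine induction, and no such statement is proved or cited in the paper. In short, the paper treats one carefully chosen family by a direct residue computation, whereas your sketch targets the general conjecture and would require several ingredients that are presently unavailable.
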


We refer to \cite[Section 4]{J12} for more discussion on this conjecture and related topics. Part (1) of Conjecture \ref{cubmfc} is completely proved in \cite{JL13c}.
In the first part of this paper, we provide a family of global Arthur parameters confirming all parts of Conjecture \ref{cubmfc}.

Let $\tau$ be an irreducible unitary cuspidal automorphic representation of
$GL_{2n}(\BA)$, with the properties that
$L(s, \tau, \wedge^2)$ has a simple
pole at $s=1$, and 
$L(\frac{1}{2}, \tau) \neq 0$. 

By Theorem 2.3 of \cite{GJS12}, there is an irreducible representation $\wt{\pi}$
of $\wt{Sp}_{2n}(\BA)$, which is $\psi^{1}$-generic,
lifts weakly to $\tau$ with respect to $\psi$.

Let $\Delta(\tau, m)$ be a Speh representation in the discrete spectrum of 
$GL_{2mn}(\BA)$. For more information about Speh representations, we refer to \cite{MW89}, or the 
Section 1.2 of \cite{JLZ12}.

Let $P_r=M_rN_r$ be the maximal parabolic subgroup of 
$Sp_{2l}$ with Levi subgroup $M_r$ isomorphic to 
$GL_r \times Sp_{2l-2r}$. Using 
the normalization in \cite{Sh10},
the group $X_{M_{r}}^{Sp_{2l}}$ of all continues homomorphisms from 
$M_{r}(\BA)$ to $\BC^{\times}$, which is trivial 
on $M_{r}(\BA)^1$ (see \cite{MW95}), 
will be identified with $\BC$ by $s \rightarrow \lambda_s$. Let $\wt{P_r}(\BA)$ be the pre-image of $P_r(\BA)$ in $\wt{Sp}_{2l}(\BA)$.

For any $\phi \in A(N_{2mn}(\BA)M_{2mn}(F) \bs Sp_{4mn}(\BA))_{\Delta(\tau,m)}$, following \cite{L76} and \cite{MW95}, an
residual Eisenstein series
can be defined by
$$
E(\phi,s)(g)=\sum_{\gamma\in P_{2mn}(F)\bks Sp_{4mn}(F)}\lambda_s \phi(\gamma g).
$$
It converges absolutely for real part of $s$ large and has meromorphic continuation to the whole complex plane $\BC$. By \cite{JLZ12}, this Eisenstein series
has a simple pole at $\frac{m}{2}$, which is the right-most one.
Denote the representation generated by these residues at $s=\frac{m}{2}$
by $\CE_{\Delta(\tau, m)}$. 
This residual representation is square-integrable.
By Section 6.2 of \cite{JLZ12}, the global Arthur parameter of $\CE_{\Delta(\tau,m)}$ is 
$\psi=(\tau, 2m) \boxplus (1_{GL_1}, 1)$.

Our first main result can be stated as follows.

\begin{thm}\label{main1}
Assume that $F$ is any number field.
$$\mathfrak{p}^m(\CE_{\Delta(\tau, m)}) = [(2n)^{2m}].$$
\end{thm}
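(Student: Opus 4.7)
The plan is to prove the equality via an upper bound coming from the Arthur parameter and a matching non-vanishing. Write $\psi = (\tau, 2m) \boxplus (1_{GL_1}, 1)$, so $\underline{p}(\psi) = [(2m)^{2n}, 1]$. First I compute the Barbasch--Vogan dual $\eta_{\mathfrak{g}^\vee, \mathfrak{g}}(\underline{p}(\psi))$ for $G = Sp_{4mn}$, $\mathfrak{g}^\vee = \mathfrak{so}_{4mn+1}$. Transposing gives $[(2m)^{2n}, 1]^{t} = [2n+1, (2n)^{2m-1}]$, which fails to be symplectic because $2n+1$ has odd multiplicity; its symplectic collapse, i.e., the largest symplectic partition $\leq [2n+1, (2n)^{2m-1}]$, is $[(2n)^{2m}]$. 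Hence $\eta_{\mathfrak{g}^\vee, \mathfrak{g}}(\underline{p}(\psi)) = [(2n)^{2m}]$, and Part (1) of Conjecture \ref{cubmfc}, which is completely proved in \cite{JL13c}, then gives the upper bound: every $\underline{p} \in \mathfrak{p}^m(\CE_{\Delta(\tau, m)})$ satisfies $\underline{p} \leq [(2n)^{2m}]$ in the natural partial order.

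It therefore suffices to exhibit a single non-vanishing Fourier coefficient of $\CE_{\Delta(\tau, m)}$ attached to the partition $[(2n)^{2m}]$; any such non-vanishing forces $[(2n)^{2m}]$ into $\mathfrak{p}^m(\CE_{\Delta(\tau, m)})$, and combined with the upper bound makes it the unique maximal element, which is the content of the theorem. To produce this non-vanishing I would follow the template of \cite{GJS12}, which handles the case $m = 1$. Applying the Fourier coefficient attached to $[(2n)^{2m}]$ (in the formalism of \cite{JL13b}) to the residue at $s = m/2$ of $E(\phi, s)$, I would unfold along the unipotent $V_{[(2n)^{2m}]}$ and run the Bruhat-cell analysis for the inducing parabolic $P_{2mn}$, so that the cuspidality of $\tau$ kills all but an open-cell contribution. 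Using the Moeglin--Waldspurger description of $\Delta(\tau, m)$ as a residue of an Eisenstein series on $GL_{2mn}(\BA)$ together with a sequence of root-exchange manipulations of the type developed in \cite{GRS11, G12, JLZ12}, the surviving integral can be rewritten as the $\psi^{1}$-Whittaker integral of the automorphic descent of $\tau$ to $\wt{Sp}_{2n}(\BA)$. Theorem 2.3 of \cite{GJS12}, using the hypotheses that $L(s, \tau, \wedge^2)$ has a simple pole at $s = 1$ and $L(\tfrac{1}{2}, \tau) \neq 0$, then supplies the required non-vanishing via the $\psi^{1}$-generic representation $\wt{\pi}$.

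The main obstacle is carrying out this unfolding for general $m$. In the cuspidal setting of \cite{GJS12} the inducing datum is a single cuspidal $\tau$ and the Fourier expansion is relatively clean; here $\Delta(\tau, m)$ is itself a residue of an Eisenstein series on $GL_{2mn}(\BA)$, so one must compose two residues and carefully track which Fourier coefficients remain non-degenerate. The key technical step is to process the two residues in the right order: unfold the outer Eisenstein series first, then exploit the mirabolic Fourier structure of $\Delta(\tau, m)$ (as in \cite{JLZ12}), monitoring the combinatorics of the unipotent subgroups at each stage. Verifying that the surviving term matches precisely the Whittaker coefficient to which Theorem 2.3 of \cite{GJS12} applies is where the bulk of the technical effort will lie.
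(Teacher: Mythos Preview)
Your upper-bound step via \cite{JL13c} and the Barbasch--Vogan dual computation matches the paper. For the non-vanishing, however, the paper does not attempt a direct unfolding of the Eisenstein series with Speh datum $\Delta(\tau,m)$; instead it argues by \emph{induction on $m$}, with base case $m=1$ supplied by \cite{GRS11}. The inductive step rests on a constant-term computation (Lemma~\ref{constantterm}): the constant term of $\CE_{\Delta(\tau,m)}$ along the parabolic $P_{2n}$ with Levi $GL_{2n}\times Sp_{4(m-1)n}$ lands in $\tau|\cdot|^{-(2m-1)/2}\otimes\CE_{\Delta(\tau,m-1)}$. After a sequence of Weyl conjugations and repeated root exchanges (Lemma~2.3 of \cite{JL13b}), supplemented by two auxiliary results (Lemmas~\ref{lem1} and~\ref{lem2}) controlling degenerate Fourier coefficients, the $[(2n)^{2m}]$-coefficient is shown to be non-vanishing if and only if a Whittaker coefficient of $\tau$ times a $[(2n)^{2(m-1)}]$-coefficient of $\CE_{\Delta(\tau,m-1)}$ is; genericity of $\tau$ and the inductive hypothesis finish.

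This induction is precisely the mechanism that resolves the obstacle you yourself flag. Your direct approach would require running a Bruhat-cell analysis against the non-cuspidal datum $\Delta(\tau,m)$ and then tracking the interaction of the two residues---the very difficulty you name as ``the main obstacle''---and you do not supply a concrete way through it (in particular, the claim that ``cuspidality of $\tau$ kills all but an open-cell contribution'' does not apply when the inducing datum is $\Delta(\tau,m)$ rather than $\tau$). By peeling off one $GL_{2n}$-factor at a time via the $P_{2n}$-constant term, the paper never confronts both residues simultaneously: at each step only the single residue defining $\CE_{\Delta(\tau,m-1)}$ enters. Your proposed endpoint (the $\psi^{1}$-Whittaker coefficient of $\wt\pi$) is indeed what one reaches after unwinding the induction down to $m=1$, so the two pictures are compatible; but the inductive organization, together with the constant-term lemma that drives it, is the missing idea in your plan.
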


This theorem
was discussed by
Ginzburg in \cite{G08} with a quite sketchy argument. A fully detailed proof will be given in Section 2.

By Theorem 1.3, Proposition 6.4 and Remark 6.5 of \cite{JL13c}, Parts (1) and (2) of Conjecture \ref{cubmfc} hold for these global Arthur parameters $\psi=(\tau, 2m) \boxplus (1_{GL_1}, 1)$. Note that in this case
$$\eta_{{so_{2n+1}(\BC),sp_{2n}(\BC)}}(\ul{p}(\psi)) =\eta_{so_{2n+1}(\BC),sp_{2n}(\BC)}([(2m)^{2n}(1)])= [(2n)^{2m}].$$ 
Combining Theorem \ref{main1}, we can see that all parts of Conjecture \ref{cubmfc} are confirmed for this family of global Arthur parameters.

%Actually, the proof of Theorem 1.3 of \cite{JL13c} easily implies more:
%
%\begin{thm}\label{thmv}
%For any symplectic partition $\ul{p}=[p_1p_2\cdots p_r]$ of $4mn$,
%if $p_1 > 2n$, then for any $\pi\in\wt{\Pi}_{\psi}(\epsilon_\psi)$, 
%where $\psi=(\tau, 2m) \boxplus (1_{GL_1}, 1)$,
%$\pi$ has no nonzero Fourier coefficients attached to $\ul{p}$, 
%in particular, $\ul{p} \notin \frak{p}^m(\pi)$.
%\end{thm}

%This implies that Part (2) of Conjecture \ref{cubmfc} also holds for the global Arthur parameter $\psi=(\tau, 2m) \boxplus (1_{GL_1}, 1)$, since 
%if a partition $\ul{p}=[p_1p_2\cdots p_r]$ is not related to $[(2n)^{2m}]$, then
%$\ul{p} > [(2n)^{2m}]$ in the lexicographical order, that is, $p_1>2n$.

In \cite{GRS03}, for any irreducible cuspidal automorphic representation $\pi$
of symplectic groups or their double covers, Ginzburg, Rallis and Soudry found a maximal partition which has only even parts, providing non-vanishing Fourier coefficients for $\pi$. We denote this partition by $\ul{p}(\pi)$.

Next, we assume that $F$ is not totally imaginary, and consider $\ol{\CN}_{Sp_{4mn}}$, the set of 
irreducible cuspidal automorphic representations $\pi$ which are nearly equivalent to $\CE_{\Delta(\tau, m)}$ and 
$$
\ul{p}(\pi) = [(2n)^{2m-1}(2n_1)^{s_1}(2n_2)^{s_2} \cdots (2n_k)^{s_k}],
$$
with $2n \geq 2n_1 > 2n_2 > \cdots > 2n_k$, $k \geq 1$. Note that these partitions are less than or equal to $[(2n)^{2m}]$.

$\ol{\CN}_{Sp_{4mn}}$ can be naturally decomposed into a disjoint 
union of two sets $\CN_{Sp_{4mn}} \cup \CN'_{Sp_{4mn}}$,
where $\CN_{Sp_{4mn}}$ consists of elements 
having a nonzero Fourier coefficient $FJ_{\psi^{-1}_{n-1}}$ (for definition, see \cite[Section 3.2]{GRS11}),
while $\CN'_{Sp_{4mn}}$ consists of elements 
having no nonzero Fourier coefficients $FJ_{\psi^{-1}_{n-1}}$.

For any $\wt{\phi} \in A(N_{2kn}(\BA)\wt{M}_{2kn}(F) \bs \wt{Sp}_{4kn+2n}(\BA))_{\mu_{\psi} \Delta(\tau,k) \otimes \wt{\pi}}$, following \cite{L76} and \cite{MW95}, an
residual Eisenstein series
can be defined by
$$
\wt{E}(\wt{\phi},s)(g)=\sum_{\gamma\in P_{2kn}(F)\bks Sp_{4kn+2n}(F)}\lambda_s \wt{\phi}(\gamma g).
$$
It converges absolutely for real part of $s$ large and has meromorphic continuation to the whole complex plane $\BC$. By similar argument as that in \cite{JLZ12}, this Eisenstein series
has a simple pole at $\frac{k+1}{2}$, which is the right-most one.
Denote the representation generated by these residues at $s=\frac{k+1}{2}$
by $\wt{\CE}_{\Delta(\tau, k) \otimes \wt{\pi}}$. This residual representation is square-integrable.

Let $\CN'_{\wt{Sp}_{4(m-1)n+2n}}(\tau, \psi)$ be the set of irreducible genuine cuspidal
automorphic representations $\wt{\sigma}_{4(m-1)n+2n}$ of $\wt{Sp}_{4(m-1)n+2n}(\BA)$, which are nearly equivalent to
the residual representation $\wt{\CE}_{\Delta(\tau, m-1) \otimes \wt{\pi}}$,
have no nonzero Fourier coefficients $FJ_{\psi^{1}_{n-1}}$, and
$$
\ul{p}(\wt{\sigma}_{4(m-1)n+2n})=[(2n)^{2(m-1)}(2n_1)^{s_1}(2n_2)^{s_2} \cdots (2n_k)^{s_k}],
$$
with $2n \geq 2n_1 > 2n_2 > \cdots > 2n_k$, $k \geq 1$.

Let $\CN_{\wt{Sp}_{4mn+2n}}(\tau, \psi)$ be the set of irreducible genuine cuspidal
automorphic representations $\wt{\sigma}_{4mn+2n}$ of $\wt{Sp}_{4mn+2n}(\BA)$, which are nearly equivalent to
the residual representation $\wt{\CE}_{\Delta(\tau, m) \otimes \wt{\pi}}$,
have a nonzero Fourier coefficient $FJ_{\psi^{1}_{n-1}}$, and 
$$
\ul{p}(\wt{\sigma}_{4mn+2n})=[(2n)^{2m}(2n_1)^{s_1}(2n_2)^{s_2} \cdots (2n_k)^{s_k}],
$$
with $2n \geq 2n_1 > 2n_2 > \cdots > 2n_k$, $k \geq 1$.

For any $\wt{\sigma}_{4(m-1)n+2n} \in \CN'_{\wt{Sp}_{4(m-1)n+2n}}(\tau, \psi)$,
for any 
$$\wt{\phi} \in A(N_{2mn}(\BA)\wt{M}_{2mn}(F) \bs \wt{Sp}_{4mn+2n}(\BA))_{\mu_{\psi} \tau \otimes \wt{\sigma}_{4(m-1)n+2n}},$$
by similar calculation as in Pages 996-997 of \cite{GJS12},
it is easy to see that the corresponding Eisenstein series 
has a simple pole at $s=m$.
Let $\wt{\CE}_{\tau, \wt{\sigma}_{4(m-1)n+2n}}$ be the residual
representation of $\wt{Sp}_{4mn+2n}(\BA)$ generated by
the corresponding residues. This residual representation is square-integrable.

For any $\sigma_{4mn} \in \CN'_{{Sp}_{4mn}}(\tau, \psi)$, 
for any 
$${\phi} \in A(N_{2mn}(\BA){M}_{2mn}(F) \bs {Sp}_{4mn+2n}(\BA))_{\tau \otimes {\sigma}_{4mn}},$$
also by similar calculation as in Pages 996-997 of \cite{GJS12},
it is easy to see that the corresponding Eisenstein series
has a simple pole at $s=\frac{2m+1}{2}$. Let $\CE_{\tau, \sigma_{4mn}}$ be the residual
representation of $Sp_{4(m+1)n}(\BA)$ generated by
the corresponding residues. This residual representation is square-integrable.

For any $\sigma_{4mn} \in \CN_{{Sp}_{4mn}}(\tau, \psi)$,
let $\CD_{2n, \psi^{-1}}^{4mn}(\sigma_{4mn})$ be the $\psi^{-1}$-descent
of $\sigma_{4mn}$ from ${Sp}_{4mn}(\BA)$ to $\wt{Sp}_{4(m-1)n+2n}(\BA)$
(defined in Chapter 3 of \cite{GRS11}).
Note that by the tower property (see Theorem 7.10 of \cite{GRS11}),
$\CD_{2n, \psi^{-1}}^{4mn}(\sigma_{4mn})$ is cuspidal.

For any $\wt{\sigma}_{4mn+2n} \in \CN_{\wt{Sp}_{4mn+2n}}(\tau, \psi)$,
let $\CD_{2n, \psi^{1}}^{4mn+2n}(\wt{\sigma}_{4mn+2n})$ be the $\psi^{1}$-descent
of $\wt{\sigma}_{4mn+2n}$ from $\wt{Sp}_{4mn+2n}(\BA)$ to ${Sp}_{4mn}(\BA)$ (defined in Chapter 3 of \cite{GRS11}).
Note that by the tower property (see Theorem 7.10 of \cite{GRS11}),
$\CD_{2n, \psi^{1}}^{4mn+2n}(\wt{\sigma}_{4mn+2n})$ is also cuspidal.

Our second main result is that there are correspondences
between $\CN_{{Sp}_{4mn}}(\tau, \psi)$ and $\CN'_{\wt{Sp}_{4(m-1)n+2n}}(\tau, \psi)$, and between
$\CN_{\wt{Sp}_{4mn+2n}}(\tau, \psi)$ and $\CN'_{{Sp}_{4n}}(\tau, \psi)$,
as follows.

\begin{thm}\label{thm1}
Assume that $F$ is a number field which is not totally imaginary.

(1) There is a surjective map 
$$
\Psi: \CN_{{Sp}_{4mn}}(\tau, \psi) \rightarrow \CN'_{\wt{Sp}_{4(m-1)n+2n}}(\tau, \psi)
$$
$$
\sigma_{4mn} \mapsto \CD_{2n, \psi^{-1}}^{4mn}(\sigma_{4mn}).
$$

(2) If for any $\wt{\sigma}_{4(m-1)n+2n} \in \CN'_{\wt{Sp}_{4(m-1)n+2n}}(\tau, \psi)$,
$\wt{\CE}_{\tau, \wt{\sigma}_{4(m-1)n+2n}}$ is irreducible, then $\Psi$ is also injective.
\end{thm}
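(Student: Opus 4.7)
The plan is to follow the pattern of \cite{GJS12}: establish well-definedness of $\Psi$ via standard descent properties, prove surjectivity by constructing a partial inverse through the auxiliary Eisenstein residue $\wt{\CE}_{\tau,\wt{\sigma}_{4(m-1)n+2n}}$ composed with another descent, and deduce the conditional injectivity from the uniqueness of a cuspidal summand in the descent of an irreducible representation.

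For well-definedness of (1), fix $\sigma_{4mn}\in\CN_{Sp_{4mn}}(\tau,\psi)$. The descent $\CD_{2n,\psi^{-1}}^{4mn}(\sigma_{4mn})$ is nonzero by definition of $\CN_{Sp_{4mn}}$ and cuspidal by the tower property noted in the text. Near equivalence of the descent with $\wt{\CE}_{\Delta(\tau,m-1)\otimes\wt{\pi}}$ is a standard unramified computation matching Satake parameters, using that $\sigma_{4mn}$ is nearly equivalent to $\CE_{\Delta(\tau,m)}$. The prescribed partition structure $[(2n)^{2(m-1)}(2n_1)^{s_1}\cdots(2n_k)^{s_k}]$ follows from the general fact that the $\psi^{-1}$-descent removes one $(2n)$-block from $\ul{p}(\sigma_{4mn})$. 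Vanishing of $FJ_{\psi^1_{n-1}}$ on the descent is shown by contradiction: non-vanishing would, via a root-exchange argument in the spirit of Section 2 of \cite{GJS12}, yield a Fourier coefficient on $\sigma_{4mn}$ attached to a symplectic partition strictly greater than $\ul{p}(\sigma_{4mn})$, contradicting maximality.

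For surjectivity, I would construct a partial inverse. Given $\wt{\sigma}_{4(m-1)n+2n}\in\CN'_{\wt{Sp}_{4(m-1)n+2n}}(\tau,\psi)$, form the residual representation $\wt{\CE}_{\tau,\wt{\sigma}_{4(m-1)n+2n}}$ on $\wt{Sp}_{4mn+2n}(\BA)$, then apply the descent $\CD_{2n,\psi^1}^{4mn+2n}$, landing in the space of automorphic forms on $Sp_{4mn}(\BA)$. I would prove: (a) this descent is cuspidal, by computing constant terms of $\wt{\CE}_{\tau,\wt{\sigma}_{4(m-1)n+2n}}$ along maximal parabolics of $\wt{Sp}_{4mn+2n}$ using the Langlands constant-term formula together with the residue structure at $s=m$, and showing every intermediate Fourier-Jacobi coefficient vanishes because $FJ_{\psi^1_{n-1}}$ vanishes on $\wt{\sigma}_{4(m-1)n+2n}$; (b) non-vanishing, via a global integral argument whose inner expansion produces a nonzero period of $\tau$ against $\wt{\sigma}_{4(m-1)n+2n}$; (c) any irreducible cuspidal summand $\sigma_{4mn}$ lies in $\CN_{Sp_{4mn}}(\tau,\psi)$, via the unramified Satake-parameter matching, the descent partition formula, and an explicit identification of the composition $\CD_{2n,\psi^{-1}}^{4mn}\circ\CD_{2n,\psi^1}^{4mn+2n}$ applied to $\wt{\CE}_{\tau,\wt{\sigma}_{4(m-1)n+2n}}$ as reproducing $\wt{\sigma}_{4(m-1)n+2n}$; this identification simultaneously witnesses $FJ_{\psi^{-1}_{n-1}}$-non-vanishing on $\sigma_{4mn}$ and $\Psi(\sigma_{4mn})\ni\wt{\sigma}_{4(m-1)n+2n}$.

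For (2), under the irreducibility hypothesis on $\wt{\CE}_{\tau,\wt{\sigma}_{4(m-1)n+2n}}$, suppose $\sigma,\sigma'\in\CN_{Sp_{4mn}}(\tau,\psi)$ both map to $\wt{\sigma}_{4(m-1)n+2n}$. The surjectivity construction shows that every preimage embeds as a cuspidal summand of $\CD_{2n,\psi^1}^{4mn+2n}(\wt{\CE}_{\tau,\wt{\sigma}_{4(m-1)n+2n}})$. Irreducibility of $\wt{\CE}_{\tau,\wt{\sigma}_{4(m-1)n+2n}}$, combined with the compatibility of the Fourier-Jacobi descent with irreducible constituents and multiplicity one for the relevant Fourier-Jacobi model, forces this descent to have at most one cuspidal summand, whence $\sigma\cong\sigma'$ and thus $\sigma=\sigma'$ by multiplicity one in the cuspidal spectrum. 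The hardest step will be the simultaneous cuspidality (a) and non-vanishing (b) in the surjectivity construction: one must carry out a delicate root-exchange analysis on $\wt{\CE}_{\tau,\wt{\sigma}_{4(m-1)n+2n}}$ showing that every intermediate Fourier-Jacobi coefficient is killed while the top one is not, balancing the residue structure of the Eisenstein series against the defining vanishing of $FJ_{\psi^1_{n-1}}$ on $\wt{\sigma}_{4(m-1)n+2n}$, following the machinery of \cite[Chapters 7-8]{GRS11} and Section 4 of \cite{GJS12}.
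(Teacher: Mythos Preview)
Your overall strategy matches the paper's: build the partial inverse from $\wt{\sigma}_{4(m-1)n+2n}$ via the residue $\wt{\CE}_{\tau,\wt{\sigma}_{4(m-1)n+2n}}$ and a further descent, and prove the double-descent identity $\CD_{2n,\psi^{-1}}^{4mn}\circ\CD_{2n,\psi^{1}}^{4mn+2n}(\wt{\CE}_{\tau,\wt{\sigma}_{4(m-1)n+2n}})=\wt{\sigma}_{4(m-1)n+2n}$. The steps (a), (b), (c) you list for surjectivity are essentially what the paper does in Theorems~\ref{thm7}, \ref{thm10}, \ref{thm5}.

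There is, however, a genuine gap in your well-definedness argument. You verify that $\CD_{2n,\psi^{-1}}^{4mn}(\sigma_{4mn})$ is nonzero, cuspidal, nearly equivalent to the right residual representation, has the right partition, and lacks $FJ_{\psi^{1}_{n-1}}$; but you never argue it is \emph{irreducible}. Since $\CN'_{\wt{Sp}_{4(m-1)n+2n}}(\tau,\psi)$ is a set of irreducible representations, $\Psi$ is not a map until this is shown. In the paper this is Theorem~\ref{thm6}, and its proof is not a routine descent check: one has to go \emph{up} one level, form the residue $\CE_{\tau,\sigma_{4mn}}$ on $Sp_{4(m+1)n}(\BA)$, prove a second double-descent identity $\CD_{2n,\psi^{1}}^{4mn+2n}\circ\CD_{2n,\psi^{-1}}^{4(m+1)n}(\CE_{\tau,\sigma_{4mn}})=\sigma_{4mn}$ (Theorem~\ref{thm8}), show that $\CD_{2n,\psi^{-1}}^{4(m+1)n}(\CE_{\tau,\sigma_{4mn}})$ is square-integrable and shares an irreducible constituent with some $\wt{\CE}_{\tau,\wt{\sigma}_{4(m-1)n+2n}}$ (Theorem~\ref{thm9}), and then sandwich $\CD_{2n,\psi^{-1}}^{4mn}(\sigma_{4mn})$ inside the irreducible $\wt{\sigma}_{4(m-1)n+2n}$ using Theorem~\ref{thm7}. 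None of this appears in your outline.

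This same gap undermines your injectivity argument. You assert that ``the surjectivity construction shows that every preimage embeds as a cuspidal summand of $\CD_{2n,\psi^{1}}^{4mn+2n}(\wt{\CE}_{\tau,\wt{\sigma}_{4(m-1)n+2n}})$,'' but the surjectivity construction only produces \emph{some} preimage inside that descent; it says nothing about an arbitrary $\sigma_{4mn}$ with $\Psi(\sigma_{4mn})=\wt{\sigma}_{4(m-1)n+2n}$. In the paper this embedding (equation~\eqref{thm6equ2}) is a \emph{byproduct} of the $Sp_{4(m+1)n}$ machinery used to prove irreducibility. Once that embedding is in hand, irreducibility of $\wt{\CE}_{\tau,\wt{\sigma}_{4(m-1)n+2n}}$ forces $\sigma_{4mn}=\CD_{2n,\psi^{1}}^{4mn+2n}(\wt{\CE}_{\tau,\wt{\sigma}_{4(m-1)n+2n}})$ directly, with no appeal to any ``multiplicity one for the Fourier--Jacobi model'' (a statement you would otherwise have to supply and prove).
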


Note that the case of $m=1$ has already been proved by Ginzburg, Jiang and Soudry
in \cite{GJS12}. We use the same idea here to extend the correspondence to higher ranks.
Among others, one key idea is to consider compositions of automorphic descent maps.
Also note that they 
include $\CE_{\Delta(\tau, 1)}$ in the domain of the map $\Psi$. For simplicity, we just
let the domain of the map $\Psi$ consist of only irreducible
cuspidal automorphic representations, and consider the descent of $\CE_{\Delta(\tau, m)}$ separately in Section 7.

\begin{thm}\label{thm2}
Assume that $F$ is a number field which is not totally imaginary.

(1) There is a surjective map 
$$
\Psi: \CN_{\wt{Sp}_{4mn+2n}}(\tau, \psi) \rightarrow \CN'_{{Sp}_{4n}}(\tau, \psi)
$$
$$
\wt{\sigma}_{4mn+2n} \mapsto \CD_{2n, \psi^{1}}^{4mn+2n}(\wt{\sigma}_{4mn+2n}).
$$

(2) If for any $\sigma_{4mn} \in \CN'_{{Sp}_{4mn}}(\tau, \psi)$,
$\CE_{\tau, {\sigma}_{4mn}}$ is irreducible, then $\Psi$ is also injective.
\end{thm}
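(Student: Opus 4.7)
The plan is to adapt the strategy used for Theorem \ref{thm1}, which itself extends the Ginzburg-Jiang-Soudry argument of \cite{GJS12} through compositions of automorphic descent maps. The two theorems are formally parallel: one interchanges the roles of linear and metaplectic covers and swaps the additive characters $\psi^{-1}\leftrightarrow\psi^{1}$, so the entire skeleton of the proof of Theorem \ref{thm1} carries over. The argument decomposes into well-definedness of $\Psi$, surjectivity via a preimage construction, and conditional injectivity.

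For well-definedness, given $\wt{\sigma}_{4mn+2n}\in\CN_{\wt{Sp}_{4mn+2n}}(\tau,\psi)$, cuspidality and nonvanishing of $\CD^{4mn+2n}_{2n,\psi^{1}}(\wt{\sigma}_{4mn+2n})$ follow from the tower property (Theorem 7.10 of \cite{GRS11}) combined with the nonvanishing of $FJ_{\psi^{1}_{n-1}}$ guaranteed by membership in $\CN_{\wt{Sp}_{4mn+2n}}(\tau,\psi)$. Near equivalence to $\CE_{\Delta(\tau,m)}$ is verified by the standard unramified local descent computation at almost all places. The prescribed partition $[(2n)^{2m-1}(2n_1)^{s_1}\cdots(2n_k)^{s_k}]$ is read off by tracking the rank reduction performed by $\CD^{4mn+2n}_{2n,\psi^{1}}$ together with the maximality upper bound coming from the proved part of Conjecture \ref{cubmfc} (see \cite{JL13c}) applied to the known Arthur parameter. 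The delicate point is vanishing of $FJ_{\psi^{-1}_{n-1}}$ on the descent: following the method of Chapter 7 of \cite{GRS11}, one identifies any such putative Fourier coefficient, via a root-exchange argument, with a Fourier coefficient of $\wt{\sigma}_{4mn+2n}$ attached to a partition strictly larger than $[(2n)^{2m}(2n_1)^{s_1}\cdots]$, which must vanish by maximality.

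For surjectivity, given $\sigma_{4mn}\in\CN'_{Sp_{4mn}}(\tau,\psi)$, I would construct a preimage as an irreducible component of the iterated descent of a Siegel-type Eisenstein residue. Form the square-integrable residual representation $\CE_{\tau,\sigma_{4mn}}$ of $Sp_{4(m+1)n}(\BA)$, obtained at the simple pole $s=\tfrac{2m+1}{2}$ established in the excerpt, and then apply $\psi^{-1}$-descent $\CD^{4(m+1)n}_{2n,\psi^{-1}}$ to land in $\wt{Sp}_{4mn+2n}(\BA)$. Following the calculation on pages 996--997 of \cite{GJS12}, a direct Fourier coefficient unwinding shows that this descent is nonzero, cuspidal, genuine, and lies in $\CN_{\wt{Sp}_{4mn+2n}}(\tau,\psi)$. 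Crucially, the two-step composition $\CD^{4mn+2n}_{2n,\psi^{1}}\circ\CD^{4(m+1)n}_{2n,\psi^{-1}}$ applied to $\CE_{\tau,\sigma_{4mn}}$ recovers $\sigma_{4mn}$ by a Siegel-type unfolding identity, so any irreducible summand of the intermediate descent serves as a preimage of $\sigma_{4mn}$ under $\Psi$.

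For conditional injectivity, assume $\CE_{\tau,\sigma_{4mn}}$ is irreducible for every $\sigma_{4mn}\in\CN'_{Sp_{4mn}}(\tau,\psi)$. If two representations $\wt{\sigma}$ and $\wt{\sigma}'$ in $\CN_{\wt{Sp}_{4mn+2n}}(\tau,\psi)$ descend to the same $\sigma_{4mn}$, the surjectivity argument exhibits both as summands of the single intermediate descent $\CD^{4(m+1)n}_{2n,\psi^{-1}}(\CE_{\tau,\sigma_{4mn}})$; the irreducibility hypothesis, together with the intertwining-operator argument used by Ginzburg, Jiang and Soudry in the $m=1$ case, forces this descent to be itself irreducible, hence $\wt{\sigma}=\wt{\sigma}'$. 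The principal obstacle is the $FJ_{\psi^{-1}_{n-1}}$-vanishing step in well-definedness: the root-exchange has to be carried out over the metaplectic cover with the Weil-representation factors attached to $\psi^{1}$ tracked through each swap, and bookkeeping the sign/character data while staying inside the partition bound for $\wt{\sigma}_{4mn+2n}$ is the main technical burden of the proof.
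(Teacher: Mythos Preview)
Your overall strategy — mirror the proof of Theorem~\ref{thm1} with the roles of $Sp$ and $\wt{Sp}$ swapped — is correct, and your surjectivity paragraph is essentially the right analogue of Theorems~\ref{thm7} and~\ref{thm5}. But there is a genuine gap in well-definedness, and a related one in injectivity.

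For $\Psi$ to be well-defined you must show that $\CD_{2n,\psi^{1}}^{4mn+2n}(\wt{\sigma}_{4mn+2n})$ is \emph{irreducible}: the target $\CN'_{Sp_{4mn}}(\tau,\psi)$ consists of irreducible cuspidal representations, and an a priori the descent is only a possibly reducible cuspidal automorphic representation. You never address this. In the paper, the corresponding step (Theorem~\ref{thm6}) is not a direct argument of the kind you sketch; it is a sandwich argument that goes \emph{up} one more level. One forms the residual representation on $\wt{Sp}_{4(m+1)n+2n}(\BA)$ attached to $\tau\otimes\wt{\sigma}_{4mn+2n}$, proves the analogues of Theorems~\ref{thm8} and~\ref{thm9} (a second composition identity together with a square-integrability/constant-term computation showing the descent one level down meets some $\CE_{\tau,\sigma_{4mn}}$), and then feeds this back into the first composition identity to trap $\CD_{2n,\psi^{1}}^{4mn+2n}(\wt{\sigma}_{4mn+2n})$ inside a single irreducible $\sigma_{4mn}$. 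Your well-definedness paragraph omits this machinery entirely.

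Two further corrections. Your mechanism for the ``prime'' condition (vanishing of $FJ_{\psi^{-1}_{n-1}}$ on the image) is off: the putative coefficient does not correspond to a strictly larger partition but to the \emph{same} maximal partition with a forbidden character datum $\ul{a}=\{-1,1\}\cup\ul{a}'$; the vanishing comes from Theorem~5.2 of \cite{JL13b} together with Lemma~3.1 of \cite{JL13b}, not from a size bound. And in your injectivity sketch, the claim that ``the surjectivity argument exhibits both $\wt{\sigma}$ and $\wt{\sigma}'$ as summands'' of $\CD_{2n,\psi^{-1}}^{4(m+1)n}(\CE_{\tau,\sigma_{4mn}})$ is unjustified: surjectivity only constructs \emph{a} preimage. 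What is actually needed is the embedding $\wt{\sigma}_{4mn+2n}=\CD_{2n,\psi^{-1}}^{4(m+1)n}(\pi')$ for some irreducible $\pi'\subset\CE_{\tau,\sigma_{4mn}}$, which is precisely the by-product (the analogue of \eqref{thm6equ2}) of the ``one level up'' irreducibility proof you are missing; the irreducibility hypothesis on $\CE_{\tau,\sigma_{4mn}}$ then forces $\pi'=\CE_{\tau,\sigma_{4mn}}$ and hence determines $\wt{\sigma}_{4mn+2n}$ from $\sigma_{4mn}$.
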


Due to the similarity of the proofs of Theorem \ref{thm1} and Theorem \ref{thm2},
we only give the proof for Theorem \ref{thm1}.

%\begin{rmk}\label{rmk5}
%Towards understanding Fourier coefficients of automorphic representations in the 
%Arthur packets, Jiang made a conjecture in \cite{J12} as follows:
%
%For a classical group $G$, and an Arthur parameter $\psi$, assume that 
%$\ul{p}(\psi)$ is the naturally attached partition. Then for any $\pi$ in the 
%automorphic $L^2$-packet $\wt{\Pi}_{\psi}(\varepsilon_{\psi})$, the set of maximal partitions providing $\pi$
%non-vanishing Fourier coefficients $\Fp^m(\pi)$ has an upper bound
%$\eta_{\Fg^{\vee}, \Fg}(\ul{p}(\psi))$, where $\eta_{\Fg^{\vee}, \Fg}$ is the
%Barbasch-Vogan duality. Moreover, there is at least one member $\pi \in \wt{\Pi}_{\psi}(\varepsilon_{\psi})$
%such that $\Fp^m(\pi)$ achieves $\eta_{\Fg^{\vee}, \Fg}(\ul{p})$.
%
%Note that by Section 6.2 of \cite{JLZ12}, we know that the Arthur parameter of $\CE_{\Delta(\tau,m)}$ is 
%$\psi=(\tau, 2m) \boxplus (1_{GL_1}, 1)$.
%Therefore, formally, we can interpret Theorem \ref{main1} as considering a particular case of Jiang's conjecture above, and Theorems \ref{thm1} and \ref{thm2} as 
%understanding 
%Fourier coefficients of certain cuspidal automrphic representations in the automorphic $L^2$-packet corresponding to the Arthur parameter
%$\psi=(\tau, 2m) \boxplus (1_{GL_1}, 1)$.
%\end{rmk}

Theorem \ref{thm1} and Theorem \ref{thm2} together give us the following diagram
about correspondences between various sets of irreducible cuspidal automorphic representations:

\begin{figure}[h]\label{fig1}
\setlength{\unitlength}{0.2cm}
\begin{picture}(60,36)
\centering
\put(4,0){$\vdots$}
\put(0,8){$\wt{Sp}_{4mn-2n}(\BA)$}
\put(0,16){$Sp_{4mn}(\BA)$}
\put(0,24){$\wt{Sp}_{4mn+2n}(\BA)$}
\put(4,32){$\vdots$}
\put(16.2,0){$\vdots$}
\put(16,4){$\downarrow$}
\put(18,4){$\CD_{2n, \psi^{1}}^{4mn-2n}$}
\put(12,8){$\wt{\CN}_{4mn-2n}$}
\put(21,8){$\dot\bigcup$}
\put(24,8){$\wt{\CN}_{4mn-2n}'$}
\put(28,12){$\downarrow$}
\put(30,12){$\CD_{2n, \psi^{-1}}^{4mn}$}
\put(26,16){$\CN_{4mn}$}
\put(34,16){$\dot\bigcup$}
\put(38,16){$\CN_{4mn}'$}
\put(40,20){$\downarrow$}
\put(42,20){$\CD_{2n, \psi^{1}}^{4mn+2n}$}
\put(36,24){$\wt{\CN}_{4mn+2n}$}
\put(45,24){$\dot\bigcup$}
\put(48,24){$\wt{\CN}_{4mn+2n}'$}
\put(52,28){$\downarrow$}
\put(54,28){$\CD_{2n, \psi^{-1}}^{4mn+4n}$}
\put(52.2,32){$\vdots$}
\end{picture}
%\caption{Correspondences Between Certain Sets of Irreducible Cuspidal Automorphic Representations ($F$ is a number field which is not totally imaginary)}
\end{figure}

In the above diagram, for short, we write that
$\CN_{{4mn}} := \CN_{Sp_{4mn}}$, $\CN_{{4mn}}' := \CN_{Sp_{4mn}}'$,
$\wt{\CN}_{{4mn \pm 2n}} := \CN_{\wt{Sp}_{4mn \pm 2n}}$,
$\wt{\CN}_{{4mn \pm 2n}}' := \CN_{\wt{Sp}_{4mn \pm 2n}}'$.

\begin{rmk}\label{rmk1}
In Theorem \ref{thm1} and Theorem \ref{thm2}, we assume that $F$ is a number field which is not totally imaginary,
the reason is that when $F$ is a totally imaginary number field, then
our construction will ``stop at some point, and can not go to higher levels". 
The explicit explanation of this phenomenon will appear elsewhere.
\end{rmk}

From Theorem \ref{main1}, for the residual representation 
$\CE_{\Delta(\tau, m)}$, we know that 
$\mathfrak{p}^m(\CE_{\Delta(\tau, m)}) = [(2n)^{2m}]$.
From its proof, and by Lemma 2.6 of \cite{GRS03} or Lemma 3.1 of \cite{JL13b},
we can see that it has a non-vanishing Fourier coefficient
attached to the partition $[(2n)1^{4mn-2n}]$ with respect
to the character $\psi_{[(2n)1^{4mn-2n}], -1}$. 
In Section 7, for any number field $F$,
we show that both $\CE_{\Delta(\tau, m)}$
and  $\CD^{4mn}_{2n, \psi^{-1}}(\CE_{\Delta(\tau, m)})$
are irreducible. The result can be stated as follows.

\begin{thm}\label{irre2}
Assume that $F$ is any number field.

(1) $\CD^{4mn}_{2n, \psi^{-1}}(\CE_{\Delta(\tau, m)})$ is square-integrable and is in the 
discrete spectrum.

(2)
Both $\CE_{\Delta(\tau, m)}$ and $\CD^{4mn}_{2n, \psi^{-1}}(\CE_{\Delta(\tau, m)})$
are irreducible.
\end{thm}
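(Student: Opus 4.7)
The plan for Part (1) is: non-vanishing, then square-integrability via Langlands' criterion, which automatically yields discrete-spectrum membership. Non-vanishing is immediate from Theorem \ref{main1}, since its proof exhibits a non-zero Fourier coefficient of $\CE_{\Delta(\tau, m)}$ attached to the partition $[(2n)1^{4mn-2n}]$ with character $\psi_{[(2n)1^{4mn-2n}], -1}$, which is precisely the Fourier--Jacobi coefficient defining $\CD^{4mn}_{2n, \psi^{-1}}$. To verify square-integrability, I would compute the constant term of the descent along every standard maximal parabolic of $\wt{Sp}_{4mn-2n}$: unfolding the Fourier--Jacobi integration, as in Chapter 7 of \cite{GRS11} and Section 4 of \cite{GJS12}, expresses each such constant term in terms of constant terms of $\CE_{\Delta(\tau, m)}$ along associated parabolics of $Sp_{4mn}$, and the latter are worked out explicitly in Section 4 of \cite{JLZ12}. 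One then checks that every resulting exponent, projected to the positive chamber of its Levi, lies strictly in the negative cone.

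For Part (2), the irreducibility of $\CE_{\Delta(\tau, m)}$ follows from a standard local-global argument. At each place $v$, the local induced representation $\Ind_{P_{2mn}(F_v)}^{Sp_{4mn}(F_v)}(\Delta(\tau_v, m)|\det|^s)$ has a unique irreducible quotient at $s = \tfrac{m}{2}$ by the local Langlands classification (via M{\oe}glin--Waldspurger in the non-archimedean case and M{\oe}glin--Renard in the archimedean case). Because the Eisenstein series has only a simple pole at $s = \tfrac{m}{2}$, the residue coincides globally with the image of the normalized intertwining operator, and its local factors are exactly these Langlands quotients. Thus $\CE_{\Delta(\tau, m)}$ is the restricted tensor product of irreducible local components, and is therefore irreducible.

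For the descent, the plan is to identify $\CD^{4mn}_{2n, \psi^{-1}}(\CE_{\Delta(\tau, m)})$ with the residual representation $\wt{\CE}_{\Delta(\tau, m-1) \otimes \wt{\pi}}$ introduced before Theorem \ref{thm1}; both live on $\wt{Sp}_{4(m-1)n+2n} = \wt{Sp}_{4mn-2n}$. Using the constant term computation from Part (1), one shows that the cuspidal support of the descent coincides with that of $\wt{\CE}_{\Delta(\tau, m-1) \otimes \wt{\pi}}$, at the evaluation point along the parabolic with Levi $\GL_{2(m-1)n} \times \wt{Sp}_{2n}$. This forces the descent to be a non-zero subrepresentation of $\wt{\CE}_{\Delta(\tau, m-1) \otimes \wt{\pi}}$, and irreducibility of the latter then follows from the same local-global argument as for $\CE_{\Delta(\tau, m)}$, with the metaplectic local analysis supplied by (and parallel to) the treatment in Section 8 of \cite{GJS12}.

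The principal difficulty will be the exponent bookkeeping during the unfolding in Part (1): several Bruhat double cosets contribute to each constant term, and all but the ``main'' piece must be shown either to vanish or to contribute exponents in the correct cone. Matching cuspidal supports in Part (2) to identify the descent with a specific residual representation is delicate for the same reason, and additionally requires that the local Langlands classification on the metaplectic cover interact compatibly with the descent functor at each place.
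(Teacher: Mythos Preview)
Your plan for Part (1) and for the irreducibility of $\CE_{\Delta(\tau,m)}$ matches the paper's proof essentially line for line: constant terms of the descent via Theorem 7.8 of \cite{GRS11}, exponent computation, Langlands' criterion; and for $\CE_{\Delta(\tau,m)}$ the unique-Langlands-quotient argument using that $\tau_v$ is generic unitary, hence fully induced from tempered data with exponents in $(-\tfrac12,\tfrac12)$ by \cite{T86}, \cite{V86}.

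For the irreducibility of the descent, however, your plan has a genuine gap. You propose to identify $\CD^{4mn}_{2n,\psi^{-1}}(\CE_{\Delta(\tau,m)})$ with $\wt\CE_{\Delta(\tau,m-1)\otimes\wt\pi}$ and then prove the latter irreducible by ``the same local-global argument.'' But that argument for $\CE_{\Delta(\tau,m)}$ worked precisely because the inducing data on the Levi is built entirely out of $\tau_v$, whose Langlands parameters you control. For $\wt\CE_{\Delta(\tau,m-1)\otimes\wt\pi}$ the inducing data involves $\wt\pi_v$, and you have no a priori temperedness (or even bounded-exponent) information about $\wt\pi_v$ at ramified places absent the Ramanujan conjecture for $\tau$. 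The paper is explicit about this: Remark \ref{rmk4} records the identification $\CD^{4mn}_{2n,\psi^{-1}}(\CE_{\Delta(\tau,m)})=\wt\CE_{\Delta(\tau,m-1),\wt\pi}$ only \emph{conditionally} on the irreducibility of $\wt\CE_{\Delta(\tau,m-1),\wt\pi}$, which it does not prove.

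The paper circumvents this by a different, self-contained bootstrapping trick. It goes \emph{up} one level: form $\CE_{\Delta(\tau,m+1)}$ on $Sp_{4(m+1)n}$, and use the already-proven irreducibility of $\CE_{\Delta(\tau,m)}$ to obtain the double-descent identity
\[
\CD^{4mn+2n}_{2n,\psi^{1}}\circ\CD^{4(m+1)n}_{2n,\psi^{-1}}(\CE_{\Delta(\tau,m+1)})=\CE_{\Delta(\tau,m)}.
\]
Then, for an arbitrary irreducible component $\pi$ of $\CD^{4mn}_{2n,\psi^{-1}}(\CE_{\Delta(\tau,m)})$, it forms the residual representation $\wt\CE_{\tau,\pi}$ on $\wt{Sp}_{4mn+2n}$ and, since $\pi$ is irreducible by fiat, gets the companion identity
\[
\CD^{4mn}_{2n,\psi^{-1}}\circ\CD^{4mn+2n}_{2n,\psi^{1}}(\wt\CE_{\tau,\pi})=\pi.
\]
A constant-term argument in the style of Theorem \ref{thm9} shows that $\CD^{4(m+1)n}_{2n,\psi^{-1}}(\CE_{\Delta(\tau,m+1)})$ meets $\wt\CE_{\tau,\pi}$ nontrivially; chaining the two identities through this intersection yields $\CD^{4mn}_{2n,\psi^{-1}}(\CE_{\Delta(\tau,m)})\subseteq\pi$, hence equality. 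The point is that this route only ever invokes irreducibility of $\CE_{\Delta(\tau,m)}$ and of the chosen component $\pi$, never of any metaplectic residual built from $\wt\pi$.
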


Note that in general, it is difficult to prove the irreducibility of certain descent representations. The case of $m=1$ of Theorem \ref{irre2} was proved in Theorem 4.1 of \cite{GJS12}, noting that by Theorem 2.5 of \cite{GJS12} $\CE_{\Delta(\tau, 1)}$ is irreducible. Also note that, the irreducibility of
$\CD^{4n}_{2n, \psi^{-1}}(\CE_{\Delta(\tau, 1)})$ actually has already been proved by Jiang and Soudry in \cite{JS03}, using different methods.

At the end of this introduction, we discuss the contents by section.
In Section 2, we will show Theorem \ref{main1},
whose proof is reduced to that of Lemma \ref{constantterm}, which 
will be given in
Section 3.
The Section 4, we will prove Part (1) of Theorem \ref{thm1},
whose proof is reduced to that of Theorem \ref{thm6}, which will be given in
Section 5. In Section 6, we completes the proof of Theorem \ref{thm1},
by proving its Part (2). In Section 7, we prove 
Theorem \ref{irre2}. We assume that $F$ is not totally imaginary only in Sections 4--6.

Acknowledgement. The author would like to take this opportunity to
express his deepest gratitude to his advisor Prof. Dihua Jiang, 
for introducing the author to the topics of Fourier coefficients of automorphic forms,
automorphic descent, and constructions of square-integrable 
automorphic representations, for sharing with the author his wonderful ideas
and insights to various problems of mathematics, for his
constant encouragement and support.
The author also would like to thank Prof. David Soudry for very helpful conversations on related topics of automorphic descent.

\section{Proof of Theorem \ref{main1}}

In this section, we prove Theorem \ref{main1}, which was discussed by
Ginzburg in \cite{G08} with a quite sketchy argument. To be complete, we give the full details here. $F$ is any number field in this section, Sections 3 and 7.

\begin{thm}[Ginzburg, Theorem 1 \cite{G08}]\label{main1part2}
$$\mathfrak{p}^m(\CE_{\Delta(\tau, m)}) = [(2n)^{2m}].$$
\end{thm}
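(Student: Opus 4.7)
The statement splits into two inequalities: an upper bound $\mathfrak{p}^m(\CE_{\Delta(\tau,m)}) \leq [(2n)^{2m}]$, and the assertion that $[(2n)^{2m}]$ itself is realized. For the upper bound, I would simply invoke the results quoted just below the statement of Theorem \ref{main1}: Parts (1) and (2) of Conjecture \ref{cubmfc} have been established for the global Arthur parameter $\psi = (\tau, 2m) \boxplus (1_{GL_1}, 1)$ in \cite{JL13c}, and the Barbasch--Vogan computation $\eta_{\mathfrak{so}_{2n+1},\mathfrak{sp}_{2n}}([(2m)^{2n}(1)]) = [(2n)^{2m}]$ noted in the text tells us that every partition in $\mathfrak{p}^m(\CE_{\Delta(\tau,m)})$ is dominated by $[(2n)^{2m}]$.

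The real content is to exhibit a non-vanishing Fourier coefficient of $\CE_{\Delta(\tau,m)}$ attached to $[(2n)^{2m}]$ for a suitable character $\psi_{[(2n)^{2m}],\alpha}$. My plan is to reduce this non-vanishing to an explicit constant-term computation, in the spirit of \cite{GJS12} and Ginzburg's sketch in \cite{G08}. Concretely, I would (i) set up the unipotent subgroup $V_{[(2n)^{2m}]}$ and its character following \cite{JL13b}, (ii) use root exchange (as in \cite[Ch.~7]{GRS11}) to swap the unipotent integration into an integration over the unipotent radical $N_{2mn}$ of the parabolic $P_{2mn}$ of $Sp_{4mn}$ that induces $\CE_{\Delta(\tau,m)}$, at the cost of inserting additional unipotent integrations coming from the standard Fourier expansions along abelian subgroups, and (iii) identify the resulting inner integral as a specific Fourier--Jacobi / Whittaker-type coefficient on $\Delta(\tau,m)$. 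The key input comes from the structure of the Speh representation: after residue at $s=m/2$ (the location of the right-most pole, by \cite{JLZ12}) the constant term along $N_{2mn}$ is, up to scalar, the inducing section evaluated on $\Delta(\tau,m)$, and the induced Whittaker-type coefficient on $\Delta(\tau,m)$ is exactly the Shalika-type functional whose non-vanishing is equivalent to $L(\tfrac12,\tau) \neq 0$.

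The technical core is precisely Lemma \ref{constantterm}, which performs the chain of root exchanges and computes the constant term explicitly, and I would defer its proof to Section 3. The main obstacle I anticipate is book-keeping: the nilpotent orbit $[(2n)^{2m}]$ gives rise to a rather large unipotent subgroup in $Sp_{4mn}$, and each root-exchange step requires verifying that a certain Fourier expansion (in variables that are swapped between quotients of unipotent radicals) contributes only the trivial-character term, which in turn uses the cuspidality of $\tau$ and vanishing of appropriate Jacquet modules. One also has to track the correct character parameter $\alpha \in F^\times/(F^\times)^2$ (the argument will produce $\alpha = -1$, which is exactly what is needed for the Fourier--Jacobi descent in Sections 4--7); getting this parameter right, rather than a generic $\alpha$, is both delicate and essential for the downstream applications.
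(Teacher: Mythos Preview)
Your upper-bound argument is fine and matches the paper. The non-vanishing argument, however, diverges from the paper's proof in a substantive way, and the divergence contains a gap.

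The paper does \emph{not} reduce to the Siegel parabolic $P_{2mn}$. It proceeds by \emph{induction on $m$}: after a sequence of Weyl-element conjugations and many applications of the root-exchange lemma (Lemma~2.3 of \cite{JL13b}), together with the auxiliary Lemma~\ref{lem2}, the $[(2n)^{2m}]$-coefficient is transformed into an integral whose inner part is the constant term of $\varphi$ along $P_{2n}$ (Levi $GL_{2n}\times Sp_{4(m-1)n}$). Lemma~\ref{constantterm} (case $i=1$) identifies this constant term as a section in $\tau\lvert\cdot\rvert^{-(2m-1)/2}\otimes \CE_{\Delta(\tau,m-1)}$; the remaining integral then factors as a Whittaker coefficient of $\tau$ times a $[(2n)^{2m-2}]$-coefficient of $\CE_{\Delta(\tau,m-1)}$. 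Non-vanishing follows from genericity of $\tau$ and the inductive hypothesis. Note also that Lemma~\ref{constantterm} does \emph{not} ``perform the chain of root exchanges''; it only computes constant terms along $P_{2ni}$, and is invoked after the root-exchange work is complete.

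Your proposal instead aims for the Siegel constant term along $N_{2mn}$ and asserts that the residual functional on $\Delta(\tau,m)$ is a ``Shalika-type functional whose non-vanishing is equivalent to $L(\tfrac12,\tau)\neq 0$''. Two problems: (i) you do not indicate how root exchange converts the $[(2n)^{2m}]$-coefficient into a full $N_{2mn}$-constant term---the unipotent group $V_{[(2n)^{2m}],2}$ is not related to $N_{2mn}$ in any direct way, and the paper's elaborate root-exchange chain is tailored to expose one $GL_{2n}$-block, not all $m$ at once; (ii) even granting such a reduction, the identification of the leftover functional on the Speh representation $\Delta(\tau,m)\subset GL_{2mn}$ as a Shalika-type period, and the claim that its non-vanishing is \emph{equivalent} to $L(\tfrac12,\tau)\neq 0$, is not a standard fact and would itself require a separate argument of comparable depth. (Shalika periods on $\tau$ detect the pole of $L(s,\tau,\wedge^2)$, not $L(\tfrac12,\tau)$; what you need on $\Delta(\tau,m)$ is a degenerate Whittaker coefficient, and for $m>1$ the Speh representation is not generic.) Finally, the character parameter produced by the paper's argument is not a single $\alpha=-1$ but a tuple $\underline{a}=\{1,-1,a_3,\ldots,a_{2m}\}$; the $-1$ you mention pertains to the auxiliary coefficient for the partition $[(2n)1^{4mn-2n}]$ discussed later, not to the $[(2n)^{2m}]$-coefficient itself.
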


\begin{proof}
By Theorem 1.3, Proposition 6.4 and Remark 6.5 of \cite{JL13c}, we only have to show that 
$\CE_{\Delta(\tau, m)}$ has a nonzero Fourier coefficient attached to
$[(2n)^{2m}]$.

We will prove this by induction on $m$. For $m=1$, this is proved in the book \cite{GRS11}.
Note that when $m=1$, $\CE_{\Delta(\tau, 1)}$ has a nonzero Fourier coefficient attached to 
the partition $[(2n)1^{2n}]$, and the descent to $\widetilde{Sp}_{2n}$ is generic
(see Theorem 3.1 of \cite{GRS11}).
Therefore, $\CE_{\Delta(\tau, 1)}$ has a nonzero Fourier coefficient attached to the 
composite partition $[(2n)1^{2n}] \circ [(2n)]$, which implies that 
$\CE_{\Delta(\tau, 1)}$ has a nonzero Fourier coefficient attached to the partition
$[(2n)^{2}]$ by Lemma 2.6 of \cite{GRS03}, or Lemma 3.1 of \cite{JL13b}.
For definition of composite partitions, we refer to Section 1 of \cite{GRS03}.

Now we assume that the theorem is true for the case of $m-1$,
and consider the case of $m \geq 2$.

Take any $\varphi \in \CE_{\Delta(\tau, m)}$, its Fourier coefficients
attached to $\ul{p} = [(2n)^{2m}]$ are of the following forms
\begin{equation}\label{main1equ1}
\varphi^{\psi_{\ul{p}, \ul{a}}}(g) = \int_{[V_{\ul{p},2}]}
\varphi(vg) \psi_{\ul{p}, \ul{a}}^{-1}(v)dv,
\end{equation}
where $\ul{a} = \{a_1, a_2, \ldots, a_{2m}\} \subset (F^*/ (F^*)^2)^{2m}$.
For definitions of the unipotent group $V_{\ul{p},2}$
and its character $\psi_{\ul{p}, \ul{a}}$, see Section 2 of \cite{JL13b}.

Assume that $T$ is the maximal split torus in $Sp_{4mn}$, 
consists of elements 
$$
\diag(t_1, t_2, \ldots, t_{2mn}, t_{2mn}^{-1}, \ldots, t_2^{-1}, t_1^{-1}).
$$

Let $\omega_1$ be the Weyl element of $Sp_{4mn}$, sending elements $t \in T$
to the following torus elements:
\begin{equation}\label{main1equ2}
t'=\diag(t^{(0)}, t^{(1)}, t^{(2)}, \ldots, t^{(n)}, t^{(n),*}, \ldots, t^{(2),*}, t^{(1),*}, t^{(0),*}),
\end{equation}
where 
\begin{align*}
t^{(0)} = & \diag(t_1, t_{n+1}, t_2, t_{n+2}, \ldots, t_i, t_{n+i}, 
\ldots, t_n, t_{2n})\\
t^{(j)} = & \diag(t_{2n+j}, t_{3n+j}, \ldots, t_{in+j}, \ldots, t_{(2m-1)n+j}),
\end{align*}
for $1 \leq j \leq n$.

Identify $Sp_{4(m-1)n}$ with its image in $Sp_{4mn}$ under 
the embedding $g \mapsto \diag(I_{2n}, g, I_{2n})$. Denote the restriction of $\omega_1$ to $Sp_{4(m-1)n}$
by $\omega_1'$.

Conjugating cross by $\omega_1$, the Fourier coefficient
$\varphi^{\psi_{\underline{p},\underline{a}}}$ becomes:
\begin{equation}\label{main1equ3}
\int_{[U_{\ul{p},2}]} \varphi(u \omega_1 g) \psi_{\underline{p},\underline{a}}^{\omega_1}(u)^{-1} du,
\end{equation}
where $U_{\ul{p},2} = \omega_1 V_{\ul{p},2} \omega_1^{-1}$,
and $\psi_{\underline{p},\underline{a}}^{\omega_1}(u) = \psi_{\underline{p},\underline{a}}(\omega_1^{-1}u\omega_1)$.

Now, we describe the structure of elements in $U_{\ul{p},2}$.
Any element in $U_{\underline{p},2}$
has the following form:
\begin{equation}\label{main1equ4}
u = 
\begin{pmatrix}
z_{2^n} & q_1 & q_2\\
0 & u' & q_1^*\\
0 & 0 & z_{2^n}^*
\end{pmatrix}
\begin{pmatrix}
I_{2n} & 0 & 0\\
p_1 & I_{(4m-4)n} & 0\\
0 & p_1^* & I_{2n}
\end{pmatrix},
\end{equation}
where $z_{2^n} \in V_{2^n}$, the unipotent radical of 
the parabolic $Q_{2^n}$ of $GL_{2n}$ with Levi isomorphic to 
$GL_2^{\times n}$;
$u' \in U_{[(2n)^{2m-2}],2} := \omega_1' V_{[(2n)^{2m-2}],2} \omega_1'^{-1}$;
$q_1 \in M_{2n\times (4m-4)n}$, 
$p_1 \in M_{(4m-4)n \times 2n}$, satisfy certain conditions,
which we do not specify at this moment;
$q_2 \in M_{(2n)\times (2n)}$, such that $q_2^t v_{2n} - v_{2n} q_2 = 0$,
where $v_{2n}$ is a matrix only with ones on the second diagonal.
Note that 
\begin{align*}
& \psi_{\underline{p},\underline{a}}^{\omega_1}
(\begin{pmatrix}
z_{2^n} & q_1 & q_2\\
0 & I_{(4m-4)n} & q_1^*\\
0 & 0 & z_{2^n}^*
\end{pmatrix}) \\
= & \psi(z_{2^n}(1,3)+\cdots + z_{2^n}(i,i+2) + \cdots + z_{2^n}(2n-2,2n))\\
\cdot & \psi(a_1 q_2(2n-1, 2) + a_2 q_2(2n,1)),
\end{align*}
where $a_1, a_2$ come from the 
$\ul{a} = \{a_1, a_2, \ldots, a_{2m}\}$ occurred in 
the Fourier coefficient 
$\varphi^{\psi_{\underline{p},\underline{a}}}$.

Since to show that $\mathcal{E}_{\Delta(\tau, m)}$ has a nonzero Fourier coefficient attached to the partition
$\ul{p} = [(2n)^{2m}]$, we only need to show that it
has a nonzero Fourier coefficient $\varphi^{\psi_{\underline{p},\underline{a}}}$
for some $\ul{a}$, we consider the following special type of $\ul{a}$:
$$
\ul{a} = \{1, -1, a_3, \ldots, a_{2m}\},
$$
where $a_3, \ldots, a_{2m}$ are arbitrary elements in $F^*/(F^*)^2$.

Let 
$A = 
\begin{pmatrix}
1 & -1\\
1 & 1
\end{pmatrix}$, and 
$\epsilon=\diag(A, \ldots, A; I_{(4m-4)n}; A^*, \ldots, A^*)$,
as in (2.31) of \cite{GJS12}.
Conjugating cross the integral in \eqref{main1equ3} by 
$\epsilon$, it becomes:
\begin{equation}\label{main1equ5}
\int_{[U^{\epsilon}_{\ul{p},2}]} \varphi(u \epsilon \omega_1 g) \psi_{\underline{p},\underline{a}}^{\omega_1, \epsilon}(u)^{-1} du,
\end{equation}
where $U^{\epsilon}_{\ul{p},2} = \epsilon U_{\ul{p},2} \epsilon^{-1}$ 
whose elements have the same structure as $U_{\ul{p},2}$ (see \eqref{main1equ4}),
and $\psi_{\underline{p},\underline{a}}^{\omega_1,\epsilon}(u) = \psi_{\underline{p},\underline{a}}^{\omega_1}(\epsilon^{-1} u \epsilon)$.

Note that now
\begin{align*}
& \psi_{\underline{p},\underline{a}}^{\omega_1, \epsilon}
(\begin{pmatrix}
z_{2^n} & q_1 & q_2\\
0 & I_{(4m-4)n} & q_1^*\\
0 & 0 & z_{2^n}^*
\end{pmatrix}) \\
= & \psi(z_{2^n}(1,3)+\cdots + z_{2^n}(i,i+2) + \cdots + z_{2^n}(2n-2,2n))\\
\cdot & \psi(q_2(2n-1, 1)).
\end{align*}
Note that the $a$ in (2.18) and (2.35) of \cite{GJS12} is $-1$ here.

Let $\nu$ be the following Weyl element of $\Sp_{4n}$
which is defined on Page 14 of \cite{GJS12}, 
also in (4.9) of \cite{GRS99}:
\begin{align}\label{main1equ6}
\begin{split}
\nu_{i,2i-1} & = 1, i = 1, \ldots, 2n,\\
\nu_{2n+i,2i} & = -1, i = 1, \ldots, n,\\
\nu_{2n+i,2i} & = 1, i = n+1, \ldots, 2n,\\
\nu_{i,j} & = 0, \text{ otherwise}.
\end{split}
\end{align}
Write $\nu$ as 
$\begin{pmatrix}
\nu_1 & \nu_2\\
\nu_3 & \nu_4
\end{pmatrix}$, where $\nu_i$'s are of size $2n \times 2n$. 

Let $\omega_2 = 
\begin{pmatrix}
\nu_1 & & \nu_2\\
& I_{(4m-4)n} & \\
\nu_3 & & \nu_4
\end{pmatrix}$, a Weyl element of $\Sp_{4mn}$. 
Conjugating cross the integral in \eqref{main1equ5}
by $\omega_2$, it becomes:
\begin{equation}\label{main1equ7}
\int_{[U^{\epsilon, \omega_2}_{\ul{p},2}]} \varphi(u \omega_2 \epsilon \omega_1 g) \psi_{\underline{p},\underline{a}}^{\omega_1, \epsilon, \omega_2}(u)^{-1} du,
\end{equation}
where $U^{\epsilon, \omega_2}_{\ul{p},2} = \omega_2 U^{\epsilon}_{\ul{p},2} \omega_2^{-1}$,
and $\psi_{\underline{p},\underline{a}}^{\omega_1,\epsilon, \omega_2}(u) = \psi_{\underline{p},\underline{a}}^{\omega_1, \epsilon}(\omega_2^{-1} u \omega_2)$.

Now let's describe the structure of elements in $U^{\epsilon, \omega_2}_{\ul{p},2}$.
Any element in $U^{\epsilon, \omega_2}_{\ul{p},2}$
has the following form:
\begin{equation}\label{main1equ8}
u = 
\begin{pmatrix}
z_{2n} & q_1 & q_2\\
0 & u' & q_1^*\\
0 & 0 & z_{2n}^*
\end{pmatrix}
\begin{pmatrix}
I_{2n} & 0 & 0\\
p_1 & I_{(4m-4)n} & 0\\
p_2 & p_1^* & I_{2n}
\end{pmatrix},
\end{equation}
where $z_{2n} \in V_{2n}$, the standard maximal unipotent subgroup of $GL_{2n}$ as before;
$u' \in U_{[(2n)^{2m-2}],2} := \omega_1' V_{[(2n)^{2m-2}],2} \omega_1'^{-1}$;
$q_1 \in M_{2n\times (4m-4)n}$, with $q_1(i,j) = 0$, for $j \leq (2m-2)i$;
$q_2, p_2 \in M_{(2n)\times (2n)}$, with $q_2(i,j) = p_2(i,j) = 0$, for $j \leq i$; 
$p_1 \in M_{(4m-4)n \times 2n}$, with $p_1(i,j) = 0$, for $i \geq (2m-2)(j-1)$.

Note that now
\begin{align}\label{main1equ9}
\begin{split}
& \psi_{\underline{p},\underline{a}}^{\omega_1,\epsilon, \omega_2}
(\begin{pmatrix}
z_{2n} & q_1 & q_2\\
0 & I_{(4m-4)n} & q_1^*\\
0 & 0 & z_{2n}^*
\end{pmatrix}) \\
= & \psi(z_{2n}(1,2)+\cdots + z_{2n}(n,n+1) \\
& - z_{2n}(n+1, n+2) - \cdots - z_{2n}(2n-1,2n)).
\end{split}
\end{align}

To proceed, we need to
define some unipotent subgroups. 
Let $R^1_i = \prod_{j=1}^i R^1_{i,j}$, for $1 \leq i \leq n$, 
where $R^1_{i,j} = \prod_{s=1}^{2m-2} X_{\alpha^i_{j,s}}$,
with $\alpha^i_{j,s} = e_i - e_{2n+(2m-2)(i-j+1)-s+1}$.
Let $R^1_i = \prod_{j=1}^i R^1_{i,j}$, for $n+1 \leq i \leq 2n-1$, 
where $R^1_{i,j} = \prod_{s=1}^{2m-2} X_{\alpha^i_{j,s}}$,
with $\alpha^i_{j,s} = e_i - e_{2n+(2m-2)(i-j+1)-s+1}$ if $j \geq i-n+1$,
and $\alpha^i_{j,s} = e_i + e_{2mn-(2m-2)(i-n)+(2m-1)(j-1)+s}$ if $j \leq i-n$.

Let $C^1_i = \prod_{j=1}^i C^1_{i,j}$, for $1 \leq i \leq n$, 
where $C^1_{i,j} = \prod_{s=1}^{2m-2} X_{\beta^i_{j,s}}$,
with $\beta^i_{j,s} = e_{2n+(2m-2)(i-j+1)-s+1} - e_{i+1}$.
Let $C^1_i = \prod_{j=1}^i C^1_{i,j}$, for $n+1 \leq i \leq 2n-1$, 
where $C^1_{i,j} = \prod_{s=1}^{2m-2} X_{\beta^i_{j,s}}$,
with $\beta^i_{j,s} = e_{2n+(2m-2)(i-j+1)-s+1}-e_{i+1}$ if $j \geq i-n+1$,
and $\beta^i_{j,s} = -e_{2mn-(2m-2)(i-n)+(2m-1)(j-1)+s}-e_{i+1}$ if $j \leq i-n$.

Let $R^2_i = \prod_{j=1}^i X_{\alpha^i_j}$, for $1 \leq i \leq n$,
with $\alpha^i_j = e_i+e_{2n-i+j}$, 
and $R^2_i = \prod_{j=1}^{2n-i} X_{\alpha^i_j}$, for $n+1 \leq i \leq 2n-1$,
with $\alpha^i_j = e_i+e_{i+j}$. 
Let $C^2_i = \prod_{j=1}^i X_{\beta^i_j}$, for $1 \leq i \leq n$,
with $\beta^i_j = -e_{2n-i+j}-e_{i+1}$, 
and $C^2_i = \prod_{j=1}^{2n-i} X_{\beta^i_j}$, for $n+1 \leq i \leq 2n-1$,
with $\beta^i_j = -e_{i+j}-e_{i+1}$. 

Now, we are ready to apply Lemma 2.3 of \cite{JL13b} repeatedly to the integration
over $\prod_{i=1}^{2n-1} C^2_i C^1_i$. We will consider the following pairs of groups:
\begin{align*}
& (R^2_1 R^1_1, C^2_1 C^1_1), \ldots, (R^2_n R^1_n, C^2_n C^1_n);\\
& (R^2_{n+1}, C^2_{n+1}), (R^1_{n+1}, C^1_{n+1});\\
& \ldots;\\
& (R^2_{2n-1}, C^2_{2n-1}), (R^1_{2n-1}, C^1_{2n-1}). 
\end{align*} 

Let $\wt{U}^{\epsilon, \omega_2}_{\ul{p},2}$ be the subgroup of $U^{\epsilon, \omega_2}_{\ul{p},2}$ with
$p_1$ and $p_2$-parts zero. 
Then, $U^{\epsilon, \omega_2}_{\ul{p},2} = \wt{U}^{\epsilon, \omega_2}_{\ul{p},2}
\prod_{i=1}^{2n-1} C^2_i C^1_i$. Let $W = U^{\epsilon, \omega_2}_{\ul{p},2}$,
$\wt{W} = \wt{U}^{\epsilon, \omega_2}_{\ul{p},2}$, and 
$\psi_W = \psi_{\underline{p},\underline{a}}^{\omega_1, \epsilon, \omega_2}$.

First, we apply Lemma 2.3 of \cite{JL13b} to $(R^2_1 R^1_1, C^2_1 C^1_1)$. For this, we need to consider the quadruple
$ (\wt{W} \prod_{i=2}^{2n-1} C^2_i C^1_i, \psi_W, X_{\alpha^1_1} R^1_{1,1}, X_{\beta^1_1} C^1_{1,1})$.
It is easy to see that this quadruple satisfies all the conditions for this lemma.
By this lemma, the integral in \eqref{main1equ7} is non-vanishing
if and only if the following integral is non-vanishing:
\begin{equation}\label{main1equ10}
\int_{[R^2_1 R^1_1 \wt{W} \prod_{i=2}^{2n-1} C^2_i C^1_i]} \varphi(rwc \omega_2 \epsilon \omega_1 g) \psi_W(w)^{-1}
dcdwdr.
\end{equation} 
Note that $R^2_1 = X_{\alpha^1_1}$, and $R^1_1 = R^1_{1,1}$.

Then we apply
Lemma 2.3 of \cite{JL13b} to $(R^2_2 R^1_2, C^2_2 C^1_2)$. For this, we need to consider the following sequence of quadruples:
\begin{align*}
& (R^2_1 R^1_1 \wt{W} \prod_{i=3}^{2n-1} C^2_iC^1_i X_{\beta^2_2}R^1_{2,2}, \psi_W, X_{\alpha^2_1}R^1_{2,1},
X_{\beta^2_1}C^1_{2,1}),\\
& (R^2_1 R^1_1 X_{\alpha^2_1} R^1_{2,1} \wt{W} \prod_{i=3}^{2n-1} C^2_iC^1_i, \psi_W, X_{\alpha^2_2}R^1_{2,2},
X_{\beta^2_2}C^1_{2,2}).
\end{align*}
Applying this lemma twice, we get that the integral in \eqref{main1equ10} is non-vanishing
if and only if the following integral is non-vanishing:
\begin{equation}\label{main1equ11}
\int_{[R^2_1R^2_2R^1_1R^1_2 \wt{W} \prod_{i=3}^{2n-1} C^2_iC^1_i]} \varphi(rwc \omega_2 \epsilon \omega_1 g) \psi_W(w)^{-1} dcdwdr.
\end{equation} 

Then we continue the above procedure, applying Lemma 2.3 of \cite{JL13b}
to pairs $(R^2_3 R^1_3, C^2_3 C^1_3), \ldots, (R^2_n R^1_n, C^2_n C^1_n)$. 
For the pair $(R^2_n R^1_n, C^2_n C^1_n)$, 
we need to consider the following sequence of quadruples:
\begin{align*}
& (\prod_{i=1}^{n-1} R^2_i R^1_i \wt{W} \prod_{i=n+1}^{2n-1} C^2_iC^1_i \prod_{s=2}^{n} X_{\beta^n_s}R^1_{n,s}, \psi_W, X_{\alpha^n_1}R^1_{s,1},
X_{\beta^n_1}C^1_{n,1}),\\
& \cdots,\\
& (\prod_{i=1}^{n-1} R^2_i R^1_i \prod_{s=1}^{n-1} X_{\alpha^l_s}R^1_{n,s} \wt{W} \prod_{i=n+1}^{2n-1} C^2_iC^1_i, \psi_W, X_{\alpha^n_n}R^1_{n,n},
X_{\beta^n_n}C^1_{n,n}).
\end{align*}
Applying Lemma 2.3 of \cite{JL13b} repeatedly, we get that the integral in \eqref{main1equ11} is non-vanishing
if and only if the following integral is non-vanishing:
\begin{equation}\label{main1equ12}
\int_{[\prod_{i=1}^{n} R^2_i R^1_i \wt{W} \prod_{i=n+1}^{2n-1} C^2_iC^1_i]} \varphi(rwc \omega_2 \epsilon \omega_1 g) \psi_W(w)^{-1} dcdwdr.
\end{equation} 

Before applying Lemma 2.3 of \cite{JL13b} to pairs
$(R^2_s, C^2_s), (R^1_s, C^1_s)$, $s = n+1, n+2, \ldots, 2n-1$, we need to 
take Fourier expansion along the one-dimensional root subgroup
$X_{e_s+e_s}$. And then we need to consider the pair 
$(R^2_s, C^2_s)$ first, then $(R^1_s, C^1_s)$.

For example, for $s=n+1$, we first take the Fourier expansion of the integral
in \eqref{main1equ12} along the one-dimensional root subgroup
$X_{e_s+e_s}$. Under the action of $GL_1$, we get two kinds of Fourier coefficients corresponding 
to the two orbits of the dual of $[X_{e_s+e_s}]$: the trivial one and the non-trivial one.
For the Fourier coefficients attached to the non-trivial orbit,
we can see that there is an inner integral $\varphi_{[(2n+2)1^{4mn-2n-2}],\{a\}}$, which is identically zero by Proposition 6.4 and Remark 6.5 of \cite{JL13c}.
Therefore only the Fourier coefficient attached to the trivial orbit,
which actually equals to the integral in \eqref{main1equ12},
survives. 

Then, we can apply Lemma 2.3 of \cite{JL13b} to pairs 
$$(R^2_{n+1}, C^2_{n+1}), (R^1_{n+1}, C^1_{n+1}).$$
We need to consider the following sequence of quadruples:
\begin{align*}
& (\prod_{i=1}^{n} R^2_i R^1_i \wt{W} 
X_{e_{n+1}+e_{n+1}} \prod_{i=n+2}^{2n-1} C^2_i
\prod_{t=n+1}^{2n-1} C^1_t \prod_{s=2}^{n-1} X_{\beta^n_s}, \psi_W, X_{\alpha^{n+1}_1},
X_{\beta^{n+1}_1}),\\
& \cdots,\\
& (\prod_{i=1}^{n} R^2_i R^1_i \prod_{s=1}^{n-2} X_{\alpha^n_s} \wt{W} X_{e_{n+1}+e_{n+1}} \prod_{i=n+2}^{2n-1} C^2_i
\prod_{t=n+1}^{2n-1} C^1_t, \psi_W, X_{\alpha^{n+1}_{n-1}},
X_{\beta^{n+1}_{n-1}}),\\
& (\prod_{i=1}^{n} R^2_i R^1_i R^2_{n+1} \wt{W} 
X_{e_{n+1}+e_{n+1}} \prod_{i=n+2}^{2n-1} C^2_i \prod_{t=n+2}^{2n-1} C^1_t \prod_{s=2}^{n+1} C^1_{n+1,s}, \psi_W, R^1_{n+1,1}, \\
& C^1_{n+1,1}),\\
& \cdots,\\
& (\prod_{i=1}^{n} R^2_i R^1_i R^2_{n+1} \prod_{s=1}^{n} R^1_{n+1,s}\wt{W} X_{e_{n+1}+e_{n+1}} \prod_{i=n+2}^{2n-1} C^2_i \prod_{t=n+2}^{2n-1} C^1_t, \psi_W, R^1_{n+1,n+1}, \\
& C^1_{n+1,n+1}),
\end{align*}
Applying Lemma 2.3 of \cite{JL13b} $2n$ times, we get that the integral in \eqref{main1equ12} is non-vanishing
if and only if the following integral is non-vanishing:
\begin{equation}\label{main1equ13}
\int_{[\prod_{i=1}^{n+1} R^2_i R^1_i \wt{W}
X_{e_{n+1}+e_{n+1}} \prod_{i=n+2}^{2n-1} C^2_iC^1_i]} \varphi(rwc \omega_2 \epsilon \omega_1 g) \psi_W(w)^{-1} dcdwdr.
\end{equation} 

After repeating the above procedure to the pairs $(R^2_s, C^2_s), (R^1_s, C^1_s)$, 
$s = n+1, n+2, \ldots, 2n-1$,
we get that the integral 
in \eqref{main1equ13} is non-vanishing if and only if the following integral is 
non-vanishing:
\begin{equation}\label{main1equ14}
\int_{[\prod_{i=1}^{2n-1}R^2_i R^1_i \wt{W}
\prod_{j=n+1}^{2n-1}X_{e_j+e_j}]} \varphi(rwx \omega_2 \epsilon \omega_1 g) \psi_W(w)^{-1} dxdwdr.
\end{equation} 
Now, let's see the structure of elements in $\prod_{i=1}^{2n-1}R^2_i R^1_i \wt{W}\prod_{j=n+1}^{2n-1}X_{e_j+e_j}$. Any element in $\prod_{i=1}^{2n-1}R^2_i R^1_i \wt{W}\prod_{j=n+1}^{2n-1}X_{e_j+e_j}$
has the following form:
$$
w = 
\begin{pmatrix}
z_{2n} & q_1 & q_2\\
0 & u' & q_1^*\\
0 & 0 & z_{2n}^*
\end{pmatrix},
$$
where $z_{2n} \in V_{2n}$, the standard standard maximal unipotent subgroup of 
$GL_{2n}$, $u' \in U_{[(2n)^{2m-2}],2} := \omega_1' V_{[(2n)^{2m-2}],2} \omega_1'^{-1}$,
$q_1 \in M_{(2n)\times (4m-4)l}$, with the last row zero,
$q_2 \in M_{(2n)\times (2n)}$, with $q_2(2n,1)=0$.
And by \eqref{main1equ9}, the restriction of $\psi_W$
to the $z_{2n}$-part gives a Whittaker character of 
$GL_{2n}$.
Note that $\psi_W(\begin{pmatrix}
z_{2n} & 0 & 0\\
0 & I_{4mn-4n} & 0\\
0 & 0 & z_{2n}^*
\end{pmatrix})=\psi(z_{2n}(1,2)+\cdots + z_{2n}(n,n+1) - z_{2n}(n+1, n+2) - \cdots - z_{2n}(2n-1,2n)).$

It is clear that the integral in \ref{main1equ14} is non-vanishing
if and only if the following integral is non-vanishing:
\begin{equation}\label{main1equ17}
\int_{[\prod_{i=1}^{2n-1}R^2_i R^1_i \wt{W}
\prod_{j=n+1}^{2n-1}X_{e_j+e_j}]} \varphi(rwx \omega_2 \epsilon \omega_1 g) \psi_W'(w)^{-1} dxdwdr,
\end{equation} 
where $\psi_W'(\begin{pmatrix}
z_{2n} & 0 & 0\\
0 & I_{4mn-4n} & 0\\
0 & 0 & z_{2n}^*
\end{pmatrix})=\psi(\sum_{i=1}^{2n-1} z_{2n}(i,i+1)).$

Then it is easy to see that the integral in \eqref{main1equ17}
has an inner integral which is exactly
$\varphi^{\psi_{N_{1^{2n-1}}}}$, using notation in Lemma \ref{lem2}.
On the other hand, we know that by Lemma \ref{lem2},
$\varphi^{\psi_{N_{1^{2n-1}}}} = \varphi^{\wt{\psi}_{N_{1^{2n}}}}$.
Therefore, the
integral in \eqref{main1equ14}
becomes
\begin{equation}\label{main1equ15}
\int_{[\wt{U}]} \varphi(u \omega_2 \epsilon \omega_1 g) \psi_{\wt{U}}(u)^{-1} du,
\end{equation} 
where any element in $\wt{U}$
has the following form:
$$
u = u(z_{2l}, u',q_1, q_2) =
\begin{pmatrix}
z_{2n} & q_1 & q_2\\
0 & u' & q_1^*\\
0 & 0 & z_{2n}^*
\end{pmatrix},
$$
where $z_{2n} \in V_{2n}$, the standard maximal unipotent subgroup of 
$GL_{2n}$, $u' \in U_{[(2n)^{2m-2}],2} := \omega_1' V_{[(2n)^{2m-2}],2} \omega_1'^{-1}$,
$q_1 \in M_{(2n)\times (4m-4)n}$, $q_2 \in M_{(2n)\times (2n)}$, such that $q_2^t v_{2n} - v_{2n} q_2 = 0$,
where $v_{2n}$ is a matrix only with ones on the second diagonal.

Hence, the integral in \eqref{main1equ15} can be written as
\begin{equation}\label{main1equ16}
\int_{u(z_{2n}, u',0, 0)} \varphi(u \omega_2 \epsilon \omega_1 g)_{P_{2n}} \psi_{\wt{U}}(u)^{-1} du,
\end{equation} 
where $\varphi_{P_{2n}}$ is the constant term of 
$\varphi$ along the parabolic subgroup $P_{2n}=M_{2n}U_{2n}$ of $\Sp_{4mn}$ with 
Levi isomorphic to $GL_{2n} \times Sp_{(4m-4)n}$.

By Lemma \ref{constantterm}, there is an automorphic function
$$
f \in A(N_{2n}(\BA)M_{2n}(F)\bs \Sp_{4mn}(\BA))_{\tau \lvert \cdot \rvert^{-\frac{2m-1}{2}}
\otimes \CE_{\Delta(\tau,m-1)}},
$$
such that 
$$
\varphi(g)_{P_{2n}} = f(g), \forall g \in Sp_{4mn}(\BA).
$$
Therefore, 
$\varphi(u \omega_2 \epsilon \omega_1 g)_{P_{2n}}$ is an automorphic form
in $\tau \lvert \cdot \rvert^{-\frac{2m-1}{2}}
\otimes \mathcal{E}_{\Delta(\tau,m-1)}$.
Note that the restriction of $\psi_{\wt{U}}$
to the $z_{2n}$-part gives us a Whittaker coefficient, and the restriction
to the $u'$-part gives a Fourier coefficient of $\mathcal{E}_{\Delta(\tau,m-1)}$
attached to the partition $[(2n)^{2m-2}]$ up to a conjugation of 
the Weyl element $\omega_1'$. On the other hand, $\tau$ is 
generic, and by induction assumption, $\mathcal{E}_{\Delta(\tau,m-1)}$
has a nonzero Fourier coefficient attached to the partition $[(2n)^{2m-2}]$.
Therefore, we can make the conclusion that 
$\mathcal{E}_{\Delta(\tau,m)}$
has a nonzero $\psi_{\underline{p},\underline{a}}$-Fourier coefficient
attached to the partition $[(2n)^{2m}]$,
for some $\ul{a} = \{1, -1, a_3, \ldots, a_{2m}\}$, $a_i \in F^*/(F^*)^2$,
for $3 \leq i \leq 2m$.
Hence, $\mathcal{O}(\mathcal{E}_{\Delta(\tau,m)}) = [(2n)^{2m}]$.

Since the proof of Lemma \ref{lem2} will be reduced to that of Lemma \ref{constantterm},  
we have completed the proof of the theorem, up to Lemma \ref{constantterm}.
\end{proof}

\begin{rmk}\label{rmk3}
By similar argument, we can also prove that
for the residual representation 
$\wt{\CE}_{\Delta(\tau, k) \otimes \wt{\pi}}$ of 
$\wt{Sp}_{4kn+2n}(\BA)$, 
$$\mathfrak{p}^m(\wt{\CE}_{\Delta(\tau, k) \otimes \wt{\pi}}) = [(2n)^{2k+1}].$$
We will not give the details here.
\end{rmk}

The following lemma is an $\Sp$-analogue of the Lemma 4.1 of \cite{JL13a}.
It gives formulas for certain constant terms
of automorphic forms in $\CE_{\Delta(\tau, m)}$.
The proof of this lemma will be given in the next section.

\begin{lem}\label{constantterm}
Let $P_{2ni} = M_{2ni} N_{2ni}$, $1 \leq i \leq m$ be the parabolic subgroup of $Sp_{4mn}$ with
Levi part $M_{2ni} \cong GL_{2ni} \times Sp_{4n(m-i)}$. Let $\varphi$
be an arbitrary automorphic form in $\CE_{\Delta(\tau, m)}$. 
Denote by $\varphi(g)_{P_{2ni}}$ the constant term of $\varphi$
along $P_{2ni}$.
Then for $1 \leq i \leq m-1$, there is an automorphic function
$$
f \in A(N_{2ni}(\BA)M_{2ni}(F)\bs Sp_{4mn}(\BA))_{\Delta(\tau, i)\lvert \cdot \rvert^{-\frac{2m-i}{2}}
\otimes \mathcal{E}_{\Delta(\tau,m-i)}},
$$
such that 
$$
\varphi(g)_{P_{2ni}} = f(g), \forall g \in Sp_{4mn}(\BA).
$$
For $i=m$, there is an automorphic function
$$
f \in A(N_{2mn}(\BA)M_{2mn}(F)\bs Sp_{4mn}(\BA))_{\Delta(\tau, m)\lvert \cdot \rvert^{-\frac{m}{2}}},
$$
such that 
$$
\varphi(g)_{P_{2mn}} = f(g), \forall g \in Sp_{4mn}(\BA).
$$
\end{lem}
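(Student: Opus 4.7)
The approach is to realize $\CE_{\Delta(\tau,m)}$ as the residue at $s = m/2$ of the Eisenstein series $E(\phi, s)$ induced from $\Delta(\tau, m) \otimes \lambda_s$ on the Siegel parabolic $P_{2mn}$ of $Sp_{4mn}$, and then compute the constant term $E(\phi, s)_{P_{2ni}}$ for generic $s$ by Bruhat decomposition, finally specializing at $s = m/2$. Since the constant-term map commutes with taking residues of a meromorphic family of Eisenstein series, this determines $\varphi_{P_{2ni}}$ for every $\varphi \in \CE_{\Delta(\tau, m)}$.

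First I would apply the standard Langlands constant-term formula to write
\begin{equation*}
E(\phi, s)_{P_{2ni}}(g) = \sum_{w} E^{M_{2ni}}\bigl(M(w,s)\phi,\, s_w\bigr)(g),
\end{equation*}
where the sum runs over representatives $w$ for the double cosets $W_{M_{2ni}} \backslash W(Sp_{4mn}) / W_{M_{2mn}}$ satisfying the usual positivity conditions. For the Siegel Levi $GL_{2mn}$ and target Levi $GL_{2ni} \times Sp_{4n(m-i)}$, these representatives can be indexed combinatorially by the number $j$ of size-$2n$ row-blocks of $GL_{2mn}$ that get ``flipped'' through the long element of the $Sp_{4n(m-i)}$ factor rather than sent straight into $GL_{2ni}$. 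Each intertwining operator $M(w, s)$ then decomposes as a product of rank-one intertwiners whose normalizations involve ratios of $L(2s + c, \tau, \wedge^2)$, $L(2s + c, \tau, \Sym^2)$, and $L(2s + c, \tau \times \tau)$ at shifts $c$ determined by the combinatorics of $w$.

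Next I would carry out the pole-and-zero bookkeeping at $s = m/2$. Using that $L(s, \tau, \wedge^2)$ has a simple pole at $s = 1$ (which is exactly the hypothesis placing $\Delta(\tau, m)$ in the discrete spectrum of $GL_{2mn}$), that $L(s, \tau, \Sym^2)$ is regular there, and that $L(s, \tau \times \tau)$ has its only pole at $s = 1$, one verifies that for each representative $w$ other than the distinguished one $w_0$ (which sends the top $2ni$ rows of $GL_{2mn}$ into $GL_{2ni}$ and the remaining $2n(m-i)$ rows into the Siegel Levi of $Sp_{4n(m-i)}$), either $M(w, s)\phi$ vanishes at $s = m/2$ or the resulting inner Eisenstein series on $M_{2ni}$ is regular at $s_w$ and contributes nothing to the residue. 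This is the same pole-tracking strategy as in Section 4 of \cite{JL13a} and the residue analyses of \cite{JLZ12}, adapted from the $GL$ and Siegel-Eisenstein settings to the present parabolic.

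For the surviving term at $w_0$, I would combine the well-known constant term of the Speh representation $\Delta(\tau, m)$ of $GL_{2mn}$ along the $(2ni, 2n(m-i))$-Levi, which is $\Delta(\tau, i) \otimes \Delta(\tau, m-i)$ up to an explicit $|\det|$-twist fixed by the Speh construction of \cite{MW89}, with the observation that the residue of the inner Siegel Eisenstein series on $Sp_{4n(m-i)}$ induced from $\Delta(\tau, m-i)$ at the correspondingly shifted point is by definition $\CE_{\Delta(\tau, m-i)}$. Matching exponents in the $\lambda_s$-normalization of \cite{Sh10} and \cite{MW95} yields $\Delta(\tau, i)\lvert\cdot\rvert^{-(2m-i)/2} \otimes \CE_{\Delta(\tau, m-i)}$ for $1 \leq i \leq m-1$, and reduces to $\Delta(\tau, m)\lvert\cdot\rvert^{-m/2}$ when $i = m$, since then the $Sp_0$ factor is trivial and no inner residue is taken. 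The main obstacle is the second step: enumerating the Weyl representatives and tracking the exact shifts $c$ for each rank-one intertwiner is intricate, and combining the resulting $L$-factor cancellations to conclude that only $w_0$ survives at $s = m/2$ is the most delicate and computationally heaviest part of the argument.
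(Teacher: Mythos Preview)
Your overall strategy---realize $\varphi$ as a residue of $E(\phi,s)$, compute the constant term of the Eisenstein series via the double-coset decomposition, then take the residue at $s=m/2$---is exactly the paper's. But two points deserve correction.

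First, the enumeration is simpler than you suggest. Because $\Delta(\tau,m)$ has cuspidal support $\tau^{\boxtimes m}$, only \emph{two} double cosets in $P_{2mn}\backslash Sp_{4mn}/P_{2ni}$ contribute: the identity $\omega_0$ and one explicit nontrivial element $\omega_1$. No $L$-function bookkeeping with $\wedge^2$, $\Sym^2$, or $\tau\times\tau$ is needed; the analysis reduces directly to locating the pole of the inner Siegel Eisenstein series on $Sp_{4n(m-i)}$, already known from \cite{JLZ12}.

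Second, and more seriously, you have misidentified which representative survives. Your $w_0$, described as sending the top $2ni$ rows of $GL_{2mn}$ into $GL_{2ni}$ and the remaining $2n(m-i)$ rows into the Siegel Levi of $Sp_{4n(m-i)}$, is the identity coset $\omega_0$. For that term the Jacquet module of $\Delta(\tau,m)$ along $Q_{2ni,2n(m-i)}$ places $\Delta(\tau,m-i)$ at twist $|\cdot|^{s+i/2}$, so at $s=m/2$ the inner Eisenstein series on $Sp_{4n(m-i)}$ sits at exponent $(m+i)/2$, strictly to the right of its rightmost pole $(m-i)/2$; hence $\omega_0$ is holomorphic there and contributes nothing to the residue. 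The survivor is $\omega_1$: there one takes the Jacquet module along $Q_{2n(m-i),2ni}$, the intertwining operator flips the $\Delta(\tau,i)$-block (producing the negative exponent $-(s+\tfrac{m-i}{2})=-(2m-i)/2$ on $GL_{2ni}$), and the $\Delta(\tau,m-i)$-factor lands at exponent $s-\tfrac{i}{2}=\tfrac{m-i}{2}$, which is precisely the pole yielding $\CE_{\Delta(\tau,m-i)}$. The same happens at $i=m$: the identity term is entire and the residue comes from the pole of the intertwining operator attached to $\omega_1$. Your final exponent $-(2m-i)/2$ is only consistent with the $\omega_1$ contribution, not with the element you described.
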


Next, we prove two important lemmas, which can be viewed as
an $\Sp$-analogue of Lemmas 6.1 and 6.2 of \cite{JL13a}.

\begin{lem}\label{lem1}
Let $N_{1^{2mn}}$ be the standard maximal unipotent subgroup of $Sp_{4mn}$
consisting of upper triangular matrices. For $p \geq 2n$, define 
a character of $N_{1^{2mn}}$ as follows:
\begin{align} \label{lem1equ1}
\begin{split}
\psi_p^{\underline{\epsilon}}(n) := & \psi(n_{1,2} + \cdots n_{p-1,p} + n_{p,p+1})\\
\cdot & \psi(\epsilon_1 n_{p+1, p+2} + \cdots + \epsilon_{2mn-p}n_{2mn,2mn+1}),
\end{split}
\end{align}
where $n \in N_{1^{2mn}}$, $\epsilon_i \in \{0, 1\}$, for $1 \leq i \leq 2mn-p-1$,
$\epsilon_{2mn-p} \in F/(F^*)^2$,
$\underline{\epsilon}= \{\epsilon_1 ,\ldots, \epsilon_{2mn-p}\}$.
Then for any automorphic form $\varphi \in \CE_{\Delta(\tau,m)}$, the following
$\psi_p^{\underline{\epsilon}}$-Fourier coefficient is identically zero:
\begin{equation}\label{lem1equ2}
\varphi^{\psi_p^{\underline{\epsilon}}}(g) := \int_{[N_{1^{2mn}}]} \varphi(ng)\psi_p^{\underline{\epsilon}}(n)^{-1}
dn \equiv 0.
\end{equation}
\end{lem}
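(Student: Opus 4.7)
The plan is to reduce the vanishing of $\varphi^{\psi_p^{\underline{\epsilon}}}$ to the vanishing of some Fourier coefficient of $\varphi\in\CE_{\Delta(\tau,m)}$ attached to a symplectic partition $\underline{q}$ of $4mn$ which strictly dominates $[(2n)^{2m}]$. Once this reduction is in hand, Part (1) of Conjecture \ref{cubmfc} for the parameter $\psi=(\tau,2m)\boxplus(1_{GL_1},1)$, established unconditionally in Proposition 6.4 and Remark 6.5 of \cite{JL13c}, forces every such Fourier coefficient of $\CE_{\Delta(\tau,m)}$ to vanish identically, completing the proof. Note that this input does not depend on Theorem \ref{main1part2}, so no circularity arises.

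Concretely, since $p\geq 2n$ the long Whittaker string $n_{1,2}+\cdots+n_{p,p+1}$ occupies the first $p$ simple-root positions and encodes a Jordan block of size $p+1\geq 2n+1$. The natural candidates for $\underline{q}$ are the symplectic partitions
$$
\underline{q}=[(p+1)\,1^{4mn-p-1}]\ \text{if }p+1\text{ is even},\qquad \underline{q}=[(p+1)^{2}\,1^{4mn-2p-2}]\ \text{if }p+1\text{ is odd},
$$
both of which have first part $\geq 2n+1>2n$ and therefore satisfy $\underline{q}>[(2n)^{2m}]$ in the dominance order. The bridge between $\varphi^{\psi_p^{\underline{\epsilon}}}$ and a $\underline{q}$-Fourier coefficient would be built through repeated applications of root exchange (Lemma 2.3 of \cite{JL13b}) together with Fourier expansions along the root subgroups lying in $V_{\underline{q},2}$ but not in $N_{1^{2mn}}$, mirroring the manipulations already carried out in the proof of Theorem \ref{main1part2}. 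The parameters $\epsilon_j\in\{0,1\}$ control which of these root-subgroup expansions contribute, while $\epsilon_{2mn-p}\in F/(F^{*})^{2}$ only fixes the non-degenerate quadratic-form datum $\underline{a}$ labeling the resulting $\underline{q}$-Fourier coefficient and does not change the underlying partition.

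The main obstacle will be the combinatorial bookkeeping: one must order the successive root exchanges and Fourier expansions so that, at every stage, the relevant quadruple satisfies the hypotheses of Lemma 2.3 of \cite{JL13b}, and so that each non-trivial orbit arising from an expansion either re-expresses the integral as a Fourier coefficient along $\underline{q}$ itself, or along an even larger symplectic partition (which again vanishes by \cite{JL13c} for the same dominance reason). Once this scheduling is arranged uniformly in $p\geq 2n$ and in $\underline{\epsilon}$, the lemma follows at once from the vanishing input of \cite{JL13c}.
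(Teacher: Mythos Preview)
The paper's proof is entirely different from your proposal and much more direct: it uses no root exchange at all. It splits into two cases. If every $\epsilon_i\neq 0$, then $\psi_p^{\underline{\epsilon}}$ is a Whittaker character and the coefficient vanishes because $\CE_{\Delta(\tau,m)}$ is not generic. Otherwise, let $i$ be the smallest index with $\epsilon_i=0$; one checks that $\psi_p^{\underline{\epsilon}}$ is trivial on the unipotent radical $N_{p+i}$, so $\varphi^{\psi_p^{\underline{\epsilon}}}$ factors through the constant term $\varphi_{P_{p+i}}$. By cuspidal support this vanishes unless $p+i=2ns$ with $2\leq s\leq m$, and in that case Lemma~\ref{constantterm} shows $\varphi_{P_{2ns}}$ is a section in $\Delta(\tau,s)\lvert\cdot\rvert^{-\frac{2m-s}{2}}\otimes\CE_{\Delta(\tau,m-s)}$. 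The remaining integration then contains a Whittaker integral on the $GL_{2ns}$ factor, which vanishes since $\Delta(\tau,s)$ is not generic for $s\geq 2$ (Proposition~2.1 of \cite{JL13a}).

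Your proposal has a genuine gap. The integral defining $\varphi^{\psi_p^{\underline{\epsilon}}}$ is over the \emph{full} maximal unipotent $N_{1^{2mn}}$, whereas a Fourier coefficient attached to a partition $\underline{q}$ is an integral over the much smaller group $V_{\underline{q},2}$. Since $V_{\underline{q},2}\subset N_{1^{2mn}}$, there are no ``root subgroups lying in $V_{\underline{q},2}$ but not in $N_{1^{2mn}}$'' as you write; rather, there are many root subgroups in $N_{1^{2mn}}$ outside $V_{\underline{q},2}$ over which you are already integrating with trivial character, and those integrations are precisely constant-term operations. Root exchange (Lemma~2.3 of \cite{JL13b}) swaps one unipotent subgroup for another of the same dimension---it does not remove such extra integrations. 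When some $\epsilon_j=0$ the character is degenerate in a way not captured by any single nilpotent-orbit coefficient, and the natural way to exploit that degeneracy is exactly the constant-term argument the paper uses. Your sketch does not supply a mechanism to get from $N_{1^{2mn}}$ down to $V_{\underline{q},2}$.

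There is also a smaller error: your claim that $[(p+1)\,1^{4mn-p-1}]$ or $[(p+1)^2\,1^{4mn-2p-2}]$ strictly dominates $[(2n)^{2m}]$ is false for $p$ near $2n$ (for instance, with $p=2n$ and $n,m\geq 2$, compare the third partial sums). What is true is that these partitions are not $\leq[(2n)^{2m}]$, so you would need Part~(2) of Conjecture~\ref{cubmfc} rather than Part~(1). That is also available from \cite{JL13c}, so the input survives, but it does not rescue the main gap above.
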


\begin{proof}
If $\epsilon_i \neq 0$, $\forall 1 \leq i \leq 2mn-p$,
then $\psi_p^{\underline{\epsilon}}$ is a generic character of $G_n$.
Since
$\mathcal{E}_{\Delta(\tau,m)}$ is not generic, it has no nonzero 
$\psi_p^{\underline{\epsilon}}$-Fourier coefficients,
i.e., $\varphi^{\psi_p^{\underline{\epsilon}}} \equiv 0$, $\forall \varphi \in \mathcal{E}_{\Delta(\tau,m)}$.

Assume that $1 \leq i \leq 2mn-p$ is the first number such that $\epsilon_i = 0$.
Then $\varphi^{\psi_p^{\underline{\epsilon}}}$ has an inner integral
$\varphi_{P_{p+i}}$, which is the constant term of $\varphi$
along $P_{p+i}$, the parabolic subgroup of $Sp_{4mn}$
with Levi isomorphic to $GL_{p+i} \times Sp_{4mn-2(p+i)}$.
Note that $p+i > 2n$. 

By the cuspidal support of $\mathcal{E}_{\Delta(\tau,m)}$, we can see that 
$\varphi_{P_{p+i}} = 0$ unless $p+i = 2ns$ with $2 \leq s \leq m$.
By Lemma \ref{constantterm}, there is an automorphic function
$$
f \in A(N_{2ns}(\BA)M_{2ns}(F)\bs Sp_{4mn}(\BA))_{\Delta(\tau, s)\lvert \cdot \rvert^{-\frac{2m-s}{2}}
\otimes \mathcal{E}_{\Delta(\tau,b-s)}},
$$
such that 
$$
\varphi(g)_{P_{2ns}} = f(g), \forall g \in Sp_{4mn}(\BA),
$$
Therefore, after taking the constant term, $\varphi(g)_{P_{2ns}}$ is an automorphic
function over $GL_{2ns}(\BA) \times Sp_{4n(m-s)}(\BA)$. 
Note that the character $\psi(n_{1,2}+\cdots+n_{2ns-1,2ns})$
gives a Whittaker character of $GL_{2ns}$. 
Since by Proposition 2.1 of \cite{JL13a}, $\Delta(\tau,s)$ is not generic for $s > 1$,
i.e., it has no nonzero Whittaker Fourier coefficients, 
we conclude that 
$\varphi^{\psi_p^{\underline{\epsilon}}} \equiv 0$.

This completes the proof of the lemma.
\end{proof}

\begin{lem}\label{lem2}
Let $N_{1^{p}}$ be the unipotent radical of the parabolic subgroup
$P_{1^{p}}$ of $Sp_{4mn}$ with Levi part isomorphic to
$GL_1^{\times p} \times Sp_{4mn-2p}$. Let 
$$
\psi_{N_{1^{p}}}(n) := \psi(n_{1,2}+ \cdots + n_{p,p+1}),
$$
and 
$$
\widetilde{\psi}_{N_{1^{p}}}(n) := \psi(n_{1,2}+ \cdots + n_{p-1,p}),
$$
be two characters of $N_{1^{p}}$. For any automorphic form $\varphi \in \mathcal{E}_{\Delta(\tau,m)}$,
define $\psi_{N_{1^{p}}}$ and $\widetilde{\psi}_{N_{1^{p}}}$-Fourier coefficients 
as follows:
\begin{equation}\label{lem2equ1}
\varphi^{\psi_{N_{1^{p}}}}(g) := \int_{[N_{1^{p}}]} \varphi(ng)\psi_{N_{1^{p}}}(n)^{-1}
dn,
\end{equation}
\begin{equation}\label{lem2equ2}
\varphi^{\widetilde{\psi}_{N_{1^{p}}}}(g) := \int_{[N_{1^{p}}]} \varphi(ng)\widetilde{\psi}_{N_{1^{p}}}(n)^{-1}
du.
\end{equation}

Then $\varphi^{\psi_{N_{1^{p}}}} \equiv 0, \forall p \geq 2n$;
$\varphi^{\psi_{N_{1^{2n-1}}}} = \varphi^{\widetilde{\psi}_{N_{1^{2n}}}}$.
\end{lem}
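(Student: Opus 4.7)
Both assertions will be reduced to Lemma \ref{lem1} by Fourier-expanding the integrals along positive root subgroups of $Sp_{4mn-2p}$ sitting inside the Levi of $P_{1^p}$, and then normalizing the non-trivial Fourier coefficients via the action of the Levi torus.

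\textbf{Part (1).} Fix $p \geq 2n$, and let $V_p$ denote the standard upper-triangular unipotent subgroup of the $Sp_{4mn-2p}$-factor of the Levi of $P_{1^p}$, so that $N_{1^{2mn}} = N_{1^p} \cdot V_p$. The plan is to Fourier-expand $\varphi^{\psi_{N_{1^p}}}(g)$ along the root subgroups of $V_p$ one at a time, starting from the simple root subgroups closest to $N_{1^p}$ (namely $X_{\alpha_{p+1}}, X_{\alpha_{p+2}}, \ldots, X_{\alpha_{2mn}}$) and then handling the remaining non-simple positive root subgroups. At each stage, non-trivial Fourier coefficients on a simple root subgroup $X_{\alpha_i}$ have their coefficient in $F^*$; for $\alpha_i = e_i - e_{i+1}$ (short simple roots) this coefficient may be normalized to $1$ via conjugation by the torus of the Levi of $P_{1^p}$, and for $\alpha_{2mn} = 2 e_{2mn}$ it may be reduced to a representative in $F^*/(F^*)^2$. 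The trivial Fourier coefficients simply enlarge the integration domain. Iterating, $\varphi^{\psi_{N_{1^p}}}(g)$ is expressed as a finite sum (indexed by $\underline{\epsilon} \in \{0,1\}^{2mn-p-1} \times F/(F^*)^2$) of Fourier coefficients of the form $\varphi^{\psi_p^{\underline{\epsilon}}}(g\gamma)$ for Levi elements $\gamma$. By Lemma \ref{lem1}, each of these vanishes identically, so $\varphi^{\psi_{N_{1^p}}} \equiv 0$.

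\textbf{Part (2).} Since $N_{1^{2n-1}} \lhd N_{1^{2n}}$ (the larger parabolic has smaller radical), fix a splitting and write $N_{1^{2n}} = V \cdot N_{1^{2n-1}}$, where $V \cong N_{1^{2n}}/N_{1^{2n-1}}$ is identified with the unipotent radical of the maximal parabolic of $Sp_{4mn-4n+2}$ stabilizing the first standard basis vector. The character $\widetilde{\psi}_{N_{1^{2n}}}$ restricts to $\psi_{N_{1^{2n-1}}}$ on $N_{1^{2n-1}}$ and is trivial on $V$. Fourier-expanding $\varphi$ along $[V]$ as $\varphi(vng) = \sum_{\chi} F_\chi(n,g)\chi(v)$, the trivial-$\chi$ piece contributes exactly $\varphi^{\widetilde{\psi}_{N_{1^{2n}}}}(g)$, and the difference is
\[
\varphi^{\psi_{N_{1^{2n-1}}}}(g) - \varphi^{\widetilde{\psi}_{N_{1^{2n}}}}(g) = \sum_{\chi \neq \mathrm{triv}} \int_{[N_{1^{2n}}]} \varphi(n'g)\, (\psi_{N_{1^{2n-1}}} \cdot \chi)(n')^{-1}\, dn'.
\]
The stabilizer of $\psi_{N_{1^{2n-1}}}$ inside the Levi of $P_{1^{2n-1}}$ acts on the non-trivial characters $\chi$ of $V$ through the $e_1$-stabilizer in $Sp_{4mn-4n+2}$. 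Combined with the further Fourier expansion along the higher root subgroups of $V$ used in Part (1), each orbit of non-trivial characters admits a representative for which the extended character on $N_{1^{2n}}$ coincides with $\psi_{N_{1^{2n}}}$. Part (1) applied with $p = 2n$ then forces every summand to vanish, yielding $\varphi^{\psi_{N_{1^{2n-1}}}} = \varphi^{\widetilde{\psi}_{N_{1^{2n}}}}$.

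\textbf{Expected obstacle.} The main bookkeeping difficulty lies in the orbit analysis of the characters of $V$: $V$ is Heisenberg-type with center $X_{2 e_1}$, so besides the abelian characters (which reduce directly to Part (1)) one must also handle characters non-trivial on the center. A further Fourier expansion along root subgroups complementary to $X_{2 e_1}$, together with the $F^*/(F^*)^2$ normalization at the long root, reduces these to the form handled by Lemma \ref{lem1}. The cuspidal-support input from Lemma \ref{constantterm} (used inside the proof of Lemma \ref{lem1}) is what ultimately makes every such Fourier coefficient vanish.
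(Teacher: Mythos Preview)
Your overall plan—Fourier expand along the unipotent of the Levi and reduce to Lemma~\ref{lem1}—is the right idea, and the structure you propose for Part~(2) is close in spirit to the paper's argument. But there is a genuine gap, and it is the same gap in both parts.

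First, the order ``simple roots $X_{\alpha_{p+1}},\ldots,X_{\alpha_{2mn}}$ first, then the non-simple ones'' does not give well-defined successive Fourier expansions: after expanding along $X_{e_{p+1}-e_{p+2}}$, the group $N_{1^p}\cdot X_{e_{p+1}-e_{p+2}}$ is not normalized by $X_{e_{p+2}-e_{p+3}}$, since their commutator $X_{e_{p+1}-e_{p+3}}$ lies outside. The paper instead expands one full ``row'' at a time: first along the one-dimensional long-root subgroup $X_{2e_{p+1}}$, then along the row $R_{p+1}$ (which becomes abelian modulo $X_{2e_{p+1}}$), and iterates with the pairs $(X_{2e_{p+k}},R_{p+k})$.

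More seriously, the claim that everything reduces to Lemma~\ref{lem1} is incomplete. At each step the Fourier expansion along $X_{2e_{p+k}}$ produces, for every square class $a\in F^*/(F^*)^2$, a Fourier coefficient attached to the partition $[(2(p+k))\,1^{4mn-2(p+k)}]$. Since $p+k>n$, the first part exceeds $2n$. These coefficients are \emph{not} of the form $\psi_p^{\underline{\epsilon}}$ in Lemma~\ref{lem1}, which allows a nontrivial long-root character only at the very last position $X_{2e_{2mn}}$, not at the intermediate $X_{2e_{p+k}}$. Nor can they be pushed toward that form by further expansion: once the center of the Heisenberg row carries a nontrivial character, $R_{p+k}$ no longer preserves it, so the iteration stalls. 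The paper kills these terms by invoking the upper-bound result from \cite{JL13c} (Proposition~6.4 and Remark~6.5 there), namely that $\CE_{\Delta(\tau,m)}$ has no nonzero Fourier coefficient attached to any partition with first part larger than $2n$. This is an input independent of the constant-term computation in Lemma~\ref{constantterm}, and without it the induction does not close. Your ``Expected obstacle'' paragraph correctly anticipates the Heisenberg difficulty but misidentifies the resolution: the center-nontrivial terms are not reducible to Lemma~\ref{lem1}; they require the \cite{JL13c} vanishing directly. The same remark applies verbatim to Part~(2) at the step of expanding along the center $X_{2e_{2n}}$ of $V$.
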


\begin{proof}
First we assume that $p \geq n$. 
Let $X_{e_{p+1}+e_{p+1}}$ be the root subgroup corresponding to the 
root $e_{p+1}+e_{p+1}$.
Since the conjugating action of $X_{e_{p+1}+e_{p+1}}$ normalizes $N_{1^{p}}$, 
and preserves the character $\psi_{N_{1^{p}}}$, we can take the Fourier expansion 
of $\varphi^{\psi_{N_{1^{p}}}}$ along $X_{e_{p+1}+e_{p+1}}$.

Under the action of $GL_1$, we will get Fourier coefficients corresponding the two orbits
of the dual of $X_{e_{p+1}+e_{p+1}}$: the trivial one and the non-trivial one. 
Note that the non-trivial orbit gives us Fourier coefficients which are exactly
the Fourier coefficients 
attached to $[(2(p+1))1^{4mn-2q-2}]$, with $p+1 >n$.
On the other hand, by Proposition 6.4 and Remark 6.5 of \cite{JL13c},
all the Fourier coefficients attached to the non-trivial orbit
vanish, and the Fourier coefficient attached to the trivial orbit,
i.e., the Fourier coefficient with respect to the trivial character,
survives. Hence, $\varphi^{\psi_{N_{1^{p}}}}$ becomes:
\begin{equation}\label{lem2equ3}
\int_{[X_{e_{p+1}+e_{p+1}}]} \int_{[N_{1^{p}}]} \varphi(nxg)\psi_{N_{1^{p}}}(n)^{-1} dn dx.
\end{equation}

Let $R_{i}$, $p+1 \leq i \leq 2mn-1$ be the following subgroup of $N_{1^{2mn}}$:
\begin{align*}
R_{i} := & \{n \in N_{1^{2mn}}| n(j,l)=0, \forall (j,l)\neq (p+1,w), \\
& p+2 \leq w \leq 4mn-p-1\}.
\end{align*}

Since $R_{p+1}$ normalizes the group $X_{e_{p+1}+e_{p+1}} N_{1^{p}}$,
and preserves the character $\psi_{N_{1^{p}}}$, we can take the Fourier expansion
of the integral in \eqref{lem2equ3} along $R_{p+1}$. Since the subgroup
$I_{p+1} \times Sp_{4mn-2p-2}$ (image of $Sp_{4mn-2p-2}$ under the 
embedding $g \mapsto \diag(I_{p+1}, g, I_{p+1})$ to $Sp_{4mn}$)
of $Sp_{4mn}$ acts on the dual space of $[R_{p+1}]$ with two orbits:
the trivial one and the non-trivial one,
the integral in \eqref{lem2equ3} becomes:
\begin{equation}\label{lem2equ4}
\sum_{\gamma} \varphi^{\psi_{N_{1^{p+1}}}}(\gamma g) + \varphi^{\widetilde{\psi}_{N_{1^{p}}}}(g),
\end{equation}
where $\gamma$ is in some quotient space which we will not specify here.

Then we continue the above process of Fourier expansion for 
the two kinds of Fourier coefficients
$\varphi^{\psi_{N_{1^{p+1}}}}$ and $\varphi^{\widetilde{\psi}_{N_{1^{p+1}}}}$
along the pair of groups $(X_{e_{p+2}+e_{p+2}}, R_{p+2})$.
We will get four kinds of Fourier coefficients:
$$
\int_{[N_{1^{p+2}}]} \varphi(ng)\psi_{N_{1^{p+2}}}^{\underline{\epsilon}}(n)^{-1}
dn,
$$
where
\begin{align*}
\psi_{N_{1^{p+2}}}^{\underline{\epsilon}}(n) := & \psi(n_{1,2} + \cdots n_{p-1,p} + n_{p,p+1})\\
\cdot & \psi(\epsilon_1 n_{p+1, p+2} + \epsilon_2 n_{p+2, p+3}),
\end{align*}
and $\underline{\epsilon} = \{\epsilon_1, \epsilon_2\}$,
$\epsilon_1, \epsilon_2 \in \{0,1\}$.
Then we can continue the above process of Fourier expansion 
for each of these four kinds of Fourier coefficients
along the pair of groups $(X_{e_{p+3}+e_{p+3}}, R_{p+3})$. 
We will get six kinds of Fourier coefficients:
$$
\int_{[N_{1^{p+3}}]} \varphi(ng)\psi_{N_{1^{p+3}}}^{\underline{\epsilon}}(n)^{-1}
dn,
$$
where
\begin{align*}
\psi_{N_{1^{p+3}}}^{\underline{\epsilon}}(n) := & \psi(n_{1,2} + \cdots n_{p-1,p} + n_{p,p+1})\\
\cdot & \psi(\epsilon_1 n_{p+1, p+2} + \epsilon_2 n_{p+2, p+3} + \epsilon_2 n_{p+3, p+4}),
\end{align*}
and $\underline{\epsilon} = \{\epsilon_1, \epsilon_2, \epsilon_3\}$,
$\epsilon_1, \epsilon_2, \epsilon_3 \in \{0,1\}$.

Keep repeating the above procedure, we will get Fourier coefficients 
of the following form:
$\varphi^{\psi_p^{\underline{\epsilon}}}$,
with $\underline{\epsilon} = \{\epsilon_1, \ldots, \epsilon_{2mn-p}\}$,
$\epsilon_i\in \{0,1\}$, $1 \leq i \leq 2mn-p-1$,
and $\epsilon_{2mn-p} \in F/(F^*)^2$. 

By Lemma \ref{lem1},
all Fourier coefficients of this kind are zero, if $p \geq 2n$. 
Therefore, $\varphi^{\psi_{N_{1^{p}}}} \equiv 0$, if $p \geq 2n$.

For $p=2n-1$, by \eqref{lem2equ4}, we can see that $\varphi^{\psi_{N_{1^{2n-1}}}} = \varphi^{\widetilde{\psi}_{N_{1^{2n}}}}$,
since $\varphi^{\psi_{N_{1^{2n}}}} \equiv 0$ by above discussion.

This completes the proof of the lemma.
\end{proof}

\section{Proof of Lemma \ref{constantterm}}

In this section, we prove Lemma \ref{constantterm}.
Before we start, we recall the
definition of the Eisenstein series in Section 1:
$$
E(\phi, s)(g) = \sum_{\gamma\in P_{2mn}(F)\bks Sp_{4mn}(F)}\lambda_s \phi(\gamma g),
$$
where $\phi \in A(N_{2mn}(\BA)M_{2mn}(F) \bs Sp_{4mn}(\BA))_{\Delta(\tau,m)}$.
Assume that $\varphi = \Res_{s=\frac{m}{2}} E(\phi, s)$. 

To compute $\varphi_{P_{2ni}} = (\Res_{s=\frac{m}{2}} E(\phi, s))_{P_{2ni}}$,
we use the fact that the residue operator and the constant term operator are interchangeable.
So, first, we are going to calculate the constant term of 
$E(\phi,s)$ along $P_{2ni}$.
We follow the calculation in Section 2 of \cite{JLZ12}.

\begin{align}\label{ctequ1}
\begin{split}
& E_{P_{2ni}}(\phi,s)(g)\\
= & \int_{N_{2ni}(F)\bks N_{2ni}(\BA)} E(\phi,s)(ug)du\\
= & \sum_{\omega^{-1} \in P_{2mn}\bks Sp_{4mn}/P_{2ni}}\sum_{\gamma \in M_{2ni}^{\omega}(F)\bs M_{2ni}(F)}\int_{[N_{2ni}^{\omega}]}
\int_{N_{2ni, \omega}(\BA)}\lam_s\phi({\omega}^{-1}\gamma u' u'' g)\\
& d u' d u''
\end{split}
\end{align}
where we define $M_{2ni}^{\omega}:=\omega P_{2mn}{\omega}^{-1}\cap M_{2ni}$ and 
$N_{2ni}^{\omega}:=\omega P_{2mn}{\omega}^{-1}\cap N_{2ni}$ and 
$[N_{2ni}^{\omega}]:=N_{2ni}^{\omega}(F)\bks N_{2ni}^{\omega}(\BA)$.
Note that the unipotent radical $N_{2ni}$ can be decomposed as a product 
$N_{2ni, \omega}N_{2ni}^{\omega}$, where $N_{2ni,\omega}$ satisfies $N_{2ni,\omega}\cap N_{2ni}^{\omega}=\{1\}$
and $N_{2ni}=N_{2ni,\omega}N_{2ni}^{\omega}=N_{2ni}^{\omega}N_{2ni,\omega}$.

Using the explicit calculation about the generalized Bruhat decomposition 
$P_{2mn}\bks Sp_{4mn}/P_{2ni}$
(see Chapter 4 of \cite{GRS11}), we can see that all the double cosets 
are killed by the cuspidal support of the Eisenstein series except
two, which have the following representatives: $\omega_0 = Id$,
and 
$$
\omega_1 = 
\begin{pmatrix}
0&0&I_{2ni}&0\\
I_{2n(m-i)}&0&0&0\\
0&0&0&I_{2n(m-i)}\\
0&-I_{2ni}&0&0
\end{pmatrix}.
$$

Then
\begin{equation}\label{ctequ2}
E_{P_{2ni}}(\phi,s)(g)=E_{P_{2ni}}(\phi,s)_{\omega_0}(g)+E_{P_{2ni}}(\phi,s)_{\omega_1}(g),
\end{equation}
where
\begin{align*}
& E_{P_{2ni}}(\phi,s)_{\omega_j}(g)\\
= & \sum_{\gamma \in M_{2ni}^{\omega_j}(F)\bs M_{2ni}(F)}\int_{[N_{2ni}^{\omega_j}]}
\int_{N_{2ni, \omega_j}(\BA)}\lam_s\phi({\omega_j}^{-1}\gamma u' u'' g)d u' d u'',
\end{align*}
$j=0,1$.

We will consider these two terms separately in the next two subsections.

\subsection{$\omega_0$-term}
Write elements in $N_{2ni}$ as follows:
$$
u(X,Z,W)=\begin{pmatrix}
I_{2ni}&X&Z&W\\
&I_{2n(m-i)}&&Z^{'}\\
&&I_{2n(m-i)}&X^{'}\\
&&&I_{2ni}
\end{pmatrix}.
$$

Note that $P_{2mn}\cap M_{2ni}\bks M_{2ni}\cong P_{2n(m-i)}\bks Sp_{4n(m-i)}$,
where $P_{2n(m-i)}$ is the parabolic subgroup of $Sp_{4n(m-i)}$
with Levi isomorphic to $GL_{2n(m-i)} \times 1_{Sp_0}$. 
Then the $\omega_0$-term of the constant term can be written as
\begin{equation}\label{ctequ3}
E_{P_{2ni}}(\phi,s)_{\omega_0}(g)
=\sum_{\gamma\in P_{2n(m-i)}(F)\bks Sp_{4n(m-i)}(F)}\int_{[N_{2ni}]}\lam_s\phi(\gamma u g)du.
\end{equation}

The integral can be calculated as follows:

\begin{align*}
& \int_{[N_{2ni}]}\lam_s\phi(\gamma u g)du\\
= & \int_{[N_{2ni}]}\lam_s\phi(u \gamma g)du\\
= & \int_{[M_{2ni\times 2n(m-i)}]}\int_{[N_{2mn}\cap U_{2ni}]}\lam_s\phi( u'u(X)\gamma g)d u' d X\\
= & \int_{[M_{2ni\times 2n(m-i)}]}\lam_s\phi(u(X)\gamma g)d X,
\end{align*}
where $u(X)=u(X,0,0)$ with $X \in M_{2ni\times 2n(m-i)}$.

As in Section 2.2 of \cite{JLZ12}, the integral
$\int_{[M_{2ni\times 2n(m-i)}]}\lam_s\phi(u(X)\gamma g)d X$
can be viewed as the constant term of the automorphic
function in $\Delta(\tau,m)$: $x \mapsto \phi(\diag(x,x^*)g)$,
along the maximal parabolic subgroup $Q_{2ni,2n(m-i)}$ of $GL_{2mn}$
with Levi isomorphic to $GL_{2ni} \times GL_{2n(m-i)}$.
We will denote it by $\phi_{Q_{2ni,2n(m-i)}}$.

Let $P_{2ni,2n(m-i)} = M_{2ni,2n(m-i)}N_{2ni,2n(m-i)}$ be a standard parabolic subgroup of $Sp_{4mn}$, whose Levi 
$M_{2ni,2n(m-i)} \cong \GL_{2ni}\times \GL_{2n(m-i)}\times 1_{Sp_0}$.

The following lemma is parallel to Lemma 2.1 of \cite{JLZ12}.

\begin{lem}\label{omega0term}
The constant term $\lam_s\phi_{Q_{2ni,2n(m-i)}}$ belongs to the following space
$$
 A(N_{2ni,2n(m-i)}(\BA)M_{2ni,2n(m-i)}(F)\bks Sp_{4mn}(\BA))_{\Delta(\tau,i) |\cdot|^{s-\frac{m-i}{2}}\otimes 
 \Delta(\tau,m-i)|\cdot|^{s+\frac{i}{2}}\otimes 1_{Sp_0}}.
$$
\end{lem}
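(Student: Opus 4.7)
The plan is to reduce this computation to the standard Moeglin--Waldspurger formula for the constant term of a Speh representation along a maximal parabolic of $GL_{2mn}$, tracking the various normalization factors carefully.

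First, I would observe that the inner integral
$\int_{[M_{2ni\times 2n(m-i)}]}\lam_s\phi(u(X)\gamma g)\, dX$
appearing in the calculation preceding the lemma is precisely the constant term, along the maximal parabolic $Q_{2ni,2n(m-i)}$ of $GL_{2mn}$, of the automorphic form
$x\mapsto \lam_s(m(x))\phi(m(x)\gamma g)$
on $GL_{2mn}(\BA)$, where $m(x)=\mathrm{diag}(x,x^*)$. Since $\phi$ lies in the Speh space $\Delta(\tau,m)$, this function (for each fixed $\gamma g$) is an automorphic form in $\Delta(\tau,m)|\det|^{s}$.

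Next, I would invoke the Moeglin--Waldspurger Jacquet module formula for Speh representations (see \cite{MW89}): for any $\phi'$ in $\Delta(\tau,m)$, the constant term $\phi'_{Q_{2ni,2n(m-i)}}$ belongs to the space of automorphic forms on $GL_{2ni}(\BA)\times GL_{2n(m-i)}(\BA)$ corresponding to the inducing data $\Delta(\tau,i)\otimes \Delta(\tau,m-i)$ shifted by specific exponents of $|\det|$ dictated by the Speh module structure. I would then incorporate the twist by $\lam_s$ (which acts as $|\det|^{s}$ on the Siegel Levi $GL_{2mn}$ and hence restricts to $|\det|^{s}$ on each factor of the smaller Levi) to obtain a preliminary form of the inducing data.

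Finally, I would reinterpret the sum over $\gamma\in P_{2n(m-i)}(F)\bs Sp_{4n(m-i)}(F)$ together with the integral over $[N_{2ni}]$ as assembling this data into a section of the $\delta^{1/2}$-normalized induced space attached to the parabolic $P_{2ni,2n(m-i)}$ of $Sp_{4mn}$. Converting between the $\delta^{1/2}$-normalizations of $P_{2mn}$ and $P_{2ni,2n(m-i)}$ produces the final shifts $s-(m-i)/2$ and $s+i/2$ stated in the lemma. The main obstacle is tracking the exponents of $|\det|$ consistently through the Speh module shifts and the two $\delta^{1/2}$ factors; this is essentially a careful bookkeeping exercise, closely paralleling Lemma 2.1 of \cite{JLZ12}, where an analogous identification is carried out for the Eisenstein series on $Sp_{4mn}$ induced from $\Delta(\tau,m)$.
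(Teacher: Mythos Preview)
Your proposal is correct and takes essentially the same approach as the paper: the paper's proof simply says ``it is almost the same as that of Lemma 2.1 of \cite{JLZ12}, word-by-word,'' and your sketch --- reducing to the Moeglin--Waldspurger constant term formula for $\Delta(\tau,m)$ along $Q_{2ni,2n(m-i)}$ and tracking the $|\det|$ exponents --- is exactly what that lemma does. Your third paragraph (reassembling via the sum over $\gamma$) actually goes beyond the lemma itself and belongs to the subsequent discussion of $E_{P_{2ni}}(\phi,s)_{\omega_0}$, but the core argument for the lemma is on target.
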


\begin{proof}
The proof is omitted here, since it is almost the same as that of Lemma 2.1 of \cite{JLZ12}, word-by-word.
\end{proof}

Therefore, by \eqref{ctequ3} and Lemma \ref{omega0term}, we can see that
$E_{P_{2ni}}(\phi,s)_{\omega_0}$ 
 belongs to the following space
$$
 A(N_{2ni}(\BA)M_{2ni}(F)\bks Sp_{4mn}(\BA))_{\Delta(\tau,i) |\cdot|^{s-\frac{m-i}{2}}\otimes 
 (\Delta(\tau,m-i)|\cdot|^{s+\frac{i}{2}}\rtimes 1_{Sp_0})}.
$$
Hence, the residue operator will kill 
$E_{P_{2ni}}(\phi,s)_{\omega_0}$ , since $s=\frac{m}{2}$ is 
not a pole of the Eisenstein series on $Sp_{4n(m-i)}$ with
inducing data $\Delta(\tau,m-i)|\cdot|^{s+\frac{i}{2}}\otimes 1_{Sp_0}$.

\subsection{$\omega_1$-term}
Note that $N_{2ni}^{\omega_1}=\{ u(Z)=u(0,Z,0)\mid Z\in M_{2ni\times 2n(m-i)}\}$.
and $M_{2ni}^{\omega_1}(F)\bks M_{2ni}(F)$ is isomorphic to $P_{2n(m-i)}(F)\bks Sp_{4n(m-i)}(F)$.

Therefore, we have
\begin{align}\label{ctequ4}
\begin{split}
&E_{P_{2ni}}(\phi,s)_{\omega_1}(g)\\
=&\sum_{\gamma\in P_{2n(m-i)}(F)\bks Sp_{4n(m-i)}}\int_{N_{2ni,\omega_1}(\BA)}
\int_{[M_{2ni\times 2n(m-i)}]}\lam_s\phi(\omega_1^{-1}\gamma u(Z) u g)d Z d u\\
=&\sum_{\gamma\in P_{2n(m-i)}(F)\bks Sp_{4n(m-i)}}\int_{N_{2ni,\omega_1}(\BA)}
\int_{[M_{2n(m-i)\times2ni}]}\lam_s\phi(u'(X) \omega_1^{-1} u \gamma g)d X du,
\end{split}
\end{align}
where
$$u'(X)=
\begin{pmatrix}
I_{2n(m-i)}&X&&\\
&I_{2ni}&&\\
&&I_{2ni}&X'\\
&&&I_{2n(m-i)}
\end{pmatrix} \text{ for } X\in M_{2n(m-i)\times 2ni}.
$$

As in case of $\omega_0$, the integral
$\int_{[M_{2n(m-i)\times2ni}]} \phi(u'(X)g)d X$
can be viewed as the constant term of the automorphic
function in $\Delta(\tau,m)$: $x \mapsto \phi(\diag(x,x^*)g)$,
along the maximal parabolic subgroup $Q_{2n(m-i),2ni}$ of $GL_{2mn}$
with Levi isomorphic to $GL_{2n(m-i)} \times GL_{2ni}$.
We will denote it by $\phi_{Q_{2n(m-i),2ni}}$.

Let $P_{2n(m-i),2ni} = M_{2n(m-i),2ni}U_{2n(m-i),2ni}$ be a standard parabolic subgroup of $Sp_{4mn}$, whose Levi 
$M_{2n(m-i),2ni} \cong \GL_{2n(m-i) \times \GL_{2ni}}\times 1_{Sp_0}$.
Then, by Lemma \ref{omega0term}, 
the constant term $\lam_s\phi_{Q_{2n(m-i),2ni}}$ belongs to the following space
$$
 A(N_{2n(m-i),2ni}(\BA)M_{2n(m-i),2ni}(F)\bks Sp_{4mn}(\BA))_{\Delta(\tau,m-i) |\cdot|^{s-\frac{i}{2}}\otimes 
 \Delta(\tau,i)|\cdot|^{s+\frac{m-i}{2}}\otimes 1_{Sp_0}}.
$$

Note that the outer integral in \eqref{ctequ4} is the intertwining operator
corresponding to $\omega_1$, which 
maps
$$
 A(N_{2n(m-i),2ni}(\BA)M_{2n(m-i),2ni}(F)\bks Sp_{4mn}(\BA))_{\Delta(\tau,m-i) |\cdot|^{s-\frac{i}{2}}\otimes 
 \Delta(\tau,i)|\cdot|^{s+\frac{m-i}{2}}\otimes 1_{Sp_0}}
$$
to
$$
 A(N_{2ni,2n(m-i)}(\BA)M_{ai,a(b-i)}(F)\bks Sp_{4mn}(\BA))_{ \Delta(\tau,i)|\cdot|^{-(s+\frac{m-i}{2})} \otimes 
\Delta(\tau,m-i) |\cdot|^{s-\frac{i}{2}}\otimes 1_{Sp_0}}.
$$
Note that $\tau$ is self-dual.

Therefore, by \eqref{ctequ4}, and the the above discussion, 
we can see that 
$E_{P_{2ni}}(\phi,s)_{\omega_1}$ 
 belongs to the following space
$$
 A(N_{2ni}(\BA)M_{2ni}(F)\bks Sp_{4mn}(\BA))_{ \Delta(\tau,i)|\cdot|^{-(s+\frac{m-i}{2})} \otimes 
(\Delta(\tau,m-i) |\cdot|^{s-\frac{i}{2}}\rtimes 1_{Sp_0})}.
$$
And for $1 \leq i \leq m-1$, after taking the residue operator,
$\Res_{s=\frac{m}{2}} E_{P_{2ni}}(\phi,s)_{\omega_1}$ 
belongs to the following space
$$
A(N_{2ni}(\BA)M_{2ni}(F)\bks Sp_{4mn}(\BA))_{\Delta(\tau, i)\lvert \cdot \rvert^{-\frac{2m-i}{2}}
\otimes \mathcal{E}_{\Delta(\tau,m-i)}},
$$
since $s=\frac{m}{2}$ is the rightmost simple pole of the (normalized)
Eisenstein series with inducing data $\Delta(\tau,m-i) |\cdot|^{s-\frac{i}{2}}\otimes 1_{Sp_0}$,
and it is not a pole of the intertwining operator corresponding to
$\omega_1$.

Therefore, for $1 \leq i \leq m-1$,
\begin{align*}
& \varphi_{P_{2ni}} \\
= & (\Res_{s=\frac{m}{2}} E(\phi,s))_{P_{2ni}} \\
= & \Res_{s=\frac{m}{2}} (E_{P_{2ni}}(\phi,s)) \\
= & \Res_{s=\frac{m}{2}} (E_{P_{2ni}}(\phi,s)_{\omega_1}),
\end{align*}
belongs to the following space
$$
A(N_{2ni}(\BA)M_{2ni}(F)\bks Sp_{4mn}(\BA))_{\Delta(\tau, i)\lvert \cdot \rvert^{-\frac{2m-i}{2}}
\otimes \mathcal{E}_{\Delta(\tau,m-i)}}.
$$

For $i=m$, $\varphi_{P_{2mn}} = \Res_{s=\frac{m}{2}} (E_{P_{2mn}}(\phi,s)_{\omega_1})$
belongs to the following space
$$
A(N_{2mn}(\BA)M_{2mn}(F)\bks Sp_{4mn}(\BA))_{ \Delta(\tau,m)|\cdot|^{-\frac{m}{2}} \otimes 1_{Sp_0}},
$$
since $E_{P_{2mn}}(\phi,s)_{\omega_1}$ 
belongs to the following space
$$
A(N_{2mn}(\BA)M_{2mn}(F)\bks Sp_{4mn}(\BA))_{ \Delta(\tau,m)|\cdot|^{-s} \otimes 1_{Sp_0}},
$$
and $s=\frac{m}{2}$ is a simple pole of the intertwining operator 
corresponding to $\omega_1$.

This completes the proof of Lemma \ref{constantterm}.

\section{Proof of Part (1) of Theorem \ref{thm1}}

In this section, we will prove that $\Psi$ is surjective,
which comes from 
the computation of composition of 
two descents
$$
\CD_{2n, \psi^{-1}}^{4mn} \circ \CD_{2n, \psi^{1}}^{4mn+2n}
(\wt{\CE}_{\tau, \wt{\sigma}_{4(m-1)n+2n}}),
$$
where $\wt{\sigma}_{4(m-1)n+2n} \in \CN'_{\wt{Sp}_{4(m-1)n+2n}}(\tau, \psi)$,
and $$
\CD_{2n, \psi^{1}}^{4mn+2n}
(\wt{\CE}_{\tau, \wt{\sigma}_{4(m-1)n+2n}}) \subset 
\CN_{{Sp}_{4mn}}(\tau, \psi).
$$
It turns out that there is a similar identity:
$$
\CD_{2n, \psi^{-1}}^{4mn} \circ \CD_{2n, \psi^{1}}^{4mn+2n}
(\wt{\CE}_{\tau, \wt{\sigma}_{4(m-1)n+2n}})=\wt{\sigma}_{4(m-1)n+2n},
$$
as in Proposition 5.2 of \cite{GJS12}.
We will prove in the next section that $\Psi$ is well-defined, i.e.,
$\CD_{2n, \psi^{-1}}^{4mn}(\sigma_{4mn})$ is irreducible (see Theorem \ref{thm6}).

Note that, from this section to Section 6, we assume that $F$ is a number field which is not totally imaginary, unless specified.

To start, we recall Proposition 4.1 of \cite{JL13c} which 
generalizes Theorem 6.3 of \cite{GRS11} and is true for any number field.

\begin{prop}\label{prop1}
Assume that $F$ is any number field.

(1) Let $\mu_i$, $1 \leq i \leq r$, be characters of $F_v^*$, $a \in F_v^*$,
then
\begin{align}\label{prop1equ1}
\begin{split}
 & FJ_{\psi^a_{k-1}} (\Ind_{P_{m_1, \ldots, m_k}}^{Sp_{2n}} \nu^{\alpha_1} \chi_1 (det_{GL_{m_1}}) \otimes \cdots \otimes \nu^{\alpha_k} \chi_k(det_{GL_{m_k}}))\\
\cong & \Ind_{P_{m_1-1, \ldots, m_k-1}}^{\wt{Sp}_{2n-2k}} \mu_{\psi^{-a}} \nu^{\alpha_1} \chi_1 (det_{GL_{m_1-1}}) \otimes \cdots \otimes \nu^{\alpha_k} \chi_k(det_{GL_{m_k-1}}).
\end{split}
\end{align}

(2) Let $\mu_i$, $1 \leq i \leq r$, be characters of $F_v^*$, $a, b \in F_v^*$,
then
\begin{align}\label{prop1equ2}
\begin{split}
 & FJ_{\psi^b_{k-1}} (\Ind_{P_{m_1, \ldots, m_k}}^{\wt{Sp}_{2n}} \mu_{\psi^{a}}\nu^{\alpha_1} \chi_1 (det_{GL_{m_1}}) \otimes \cdots \otimes \nu^{\alpha_k} \chi_k(det_{GL_{m_k}}))\\
\cong & \Ind_{P_{m_1-1, \ldots, m_k-1}}^{{Sp}_{2n-2k}} \chi_{\frac{b}{a}} \nu^{\alpha_1} \chi_1 (det_{GL_{m_1-1}}) \otimes \cdots \otimes \nu^{\alpha_k} \chi_k(det_{GL_{m_k-1}}),
\end{split}
\end{align}
where $\chi_{\frac{b}{a}}$ is a quadratic character of $F_v^*$ defined by
Hilbert symbol as follows: 
$\chi_{\frac{b}{a}}(x)=(\frac{b}{a}, x)$, and $\nu=\lvert \det(\cdot) \rvert$.
\end{prop}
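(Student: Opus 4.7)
The plan is to reduce to a purely local statement at each place $v$ of $F$, since the Fourier-Jacobi coefficient functor is compatible with parabolic induction place-by-place, and since the isomorphisms asserted are isomorphisms of admissible representations of the respective local groups. Once local, the key tools are (i) the Bernstein-Zelevinsky geometric lemma applied to the Jacquet-functor component of $FJ_{\psi^a_{k-1}}$, and (ii) the standard realization of $FJ_{\psi^a_{k-1}}$ as a three-step procedure: take the Jacquet module along the unipotent radical $N^{(k-1)}$ of the parabolic $P^{(k-1)}$ of $\Sp_{2n}$ with Levi $\GL_1^{k-1}\times \Sp_{2(n-k+1)}$, impose the $\psi^a_{k-1}$-twist on the $\GL_1^{k-1}$ part (getting a representation of $\Sp_{2(n-k+1)}\times H_{2(n-k)+1}$), tensor with the Weil representation of $\wt{\Sp}_{2(n-k)}\ltimes H_{2(n-k)+1}$ attached to the additive character $\psi^{-a}$ (resp.\ $\psi^{b}$ in part (2)), and finally take the Jacquet module along the center of the Heisenberg group.

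First, I would apply the geometric lemma to compute the Jacquet module of $\Ind_{P_{m_1,\ldots,m_k}}^{\Sp_{2n}}(\cdots)$ along $N^{(k-1)}$. This yields a filtration indexed by double cosets $P_{m_1,\ldots,m_k}\bs \Sp_{2n}/P^{(k-1)}$. Next I would show that, after imposing the $\psi^a_{k-1}$-twist on the $N^{(k-1)}$-side, every double coset except one is killed. The surviving cell is the one on which each $\GL_{m_i}$-block of the Levi meets the Levi of $P^{(k-1)}$ in a sub-block of size $m_i - 1$ (with a single row/column going into the $\GL_1$-factors that carry the twist); the calculation of which cells are killed parallels Chapter 6 of \cite{GRS11} but must be redone in the current inducing setup. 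On the surviving cell, the resulting representation factors as a parabolically induced representation on $\GL_{2(n-k+1)-1}$ (with each block shrunk by one) tensored with a Weil-type representation of $\Sp_{2(n-k+1)}\ltimes H_{2(n-k)+1}$.

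Then I would perform the final two steps of $FJ_{\psi^a_{k-1}}$ (tensoring by the Weil representation and taking the Heisenberg-Jacquet module). This step is exact and intertwines parabolic induction with parabolic induction: it replaces the factor of $\Sp_{2(n-k+1)}\ltimes H_{2(n-k)+1}$ by its descent to $\wt{\Sp}_{2(n-k)}$ (in part (1)) or to $\Sp_{2(n-k)}$ (in part (2)), while the $\GL$-induction part is transported untouched. Putting the two pieces together yields the right-hand side of \eqref{prop1equ1} and \eqref{prop1equ2} respectively.

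The main obstacle, and the part that distinguishes parts (1) and (2), is the careful bookkeeping of the metaplectic cover and the quadratic twist. In part (1), the Weil representation with central character $\psi^{-a}$ is responsible for the genuine character $\mu_{\psi^{-a}}$ appearing on the target $\wt{\Sp}_{2(n-k)}$. In part (2), the Weil representation $\omega_{\psi^a}$ that was already built into the $\wt{\Sp}_{2n}$-induction combines with the descent parameter $\psi^b$ via the product formula $\mu_{\psi^a}\cdot \mu_{\psi^{-b}} = \chi_{b/a}$ (modulo the usual eighth-root-of-unity factors that cancel) to produce the Hilbert-symbol quadratic character $\chi_{b/a}$. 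I would follow the sign and cocycle tracking of \cite[Ch.~6]{GRS11} (and the extension in \cite{JL13c}) to confirm that no assumption on $F$ being non-totally-imaginary enters — the statement is genuinely local and valid for arbitrary number fields, as asserted.
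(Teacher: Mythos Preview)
The paper does not prove this proposition at all: it is simply \emph{recalled} from \cite[Proposition~4.1]{JL13c}, which the paper notes generalizes \cite[Theorem~6.3]{GRS11} (the case $a=b=1$). So there is nothing to compare your argument against in the paper itself.

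That said, your outline is the standard route and is essentially what the cited references carry out: the local reduction, the Bernstein--Zelevinsky geometric lemma for the Jacquet module along $N^{(k-1)}$, the identification of the unique surviving double coset after imposing the $\psi^a_{k-1}$-twist, and the Weil-representation/Heisenberg step. Your bookkeeping of the genuine character is correct in spirit: in part~(1) the Weil representation attached to $\psi^{-a}$ produces $\mu_{\psi^{-a}}$, and in part~(2) the ratio of Weil indices yields the Hilbert-symbol character $\chi_{b/a}$. The only caution is that the elimination of the non-contributing double cosets is where most of the work lies in \cite[Ch.~6]{GRS11}, and your sketch passes over it quickly; if you were writing this out in full you would need to verify that the twisted Jacquet module vanishes on every cell except the one you name, which is a genuine (if routine) case analysis.
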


%\begin{proof}
%The proof is the same as Theorem 6.3 of \cite{GRS11}.
%The key calculation is reduced to that in Proposition 6.6 of \cite{GRS11}.
%Explicitly,
%\begin{align*}
%&\gamma_{\psi^{a}} \gamma_{\psi^{-b}}\\
%= & \gamma_{\psi^{a}} \gamma_{\psi^{-a}}\chi_{\frac{b}{a}}\\
%= & \chi_{\frac{b}{a}}.
%\end{align*}
%\end{proof}

Note that when $a=b=1$, Proposition \ref{prop1} is exactly Theorem 6.3 of \cite{GRS11}.

Next, we prove the equality mentioned at the beginning of
this section, which will imply later that $\Psi$ is surjective.

\begin{thm}\label{thm7}
(1)
$$
\CD_{2n, \psi^{-1}}^{4mn} \circ \CD_{2n, \psi^{1}}^{4mn+2n}
(\wt{\CE}_{\tau, \wt{\sigma}_{4(m-1)n+2n}}) \neq 0.
$$

(2)
$$
\CD_{2n, \psi^{-1}}^{4mn} \circ \CD_{2n, \psi^{1}}^{4mn+2n}
(\wt{\CE}_{\tau, \wt{\sigma}_{4(m-1)n+2n}})=\wt{\sigma}_{4(m-1)n+2n}.
$$
\end{thm}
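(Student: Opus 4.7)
The plan is to follow the strategy of Proposition 5.2 of \cite{GJS12} and extend it to the higher-rank setting: unfold the two descent maps against the residual Eisenstein series $\wt{\CE}_{\tau, \wt{\sigma}_{4(m-1)n+2n}}$ step by step, and show that the only surviving contribution reproduces $\wt{\sigma}_{4(m-1)n+2n}$. Part (1) will then follow from part (2), since by hypothesis $\wt{\sigma}_{4(m-1)n+2n}$ is a nonzero cuspidal representation, so any identification of the composition with (a nonzero multiple of) $\wt{\sigma}_{4(m-1)n+2n}$ automatically yields nonvanishing.

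First I would recall that $\wt{\CE}_{\tau, \wt{\sigma}_{4(m-1)n+2n}}$ is obtained as the simple-pole residue at $s=m$ of the Eisenstein series on $\wt{Sp}_{4mn+2n}(\BA)$ built from sections in $\mathrm{Ind}_{\wt{P}_{2n}}^{\wt{Sp}_{4mn+2n}} \mu_\psi \tau|\cdot|^s \otimes \wt{\sigma}_{4(m-1)n+2n}$. Applying the descent $\CD_{2n, \psi^1}^{4mn+2n}$ means integrating the Eisenstein series against a theta function and a Fourier-Jacobi character $\psi^1_{n-1}$. Formally swapping sum and integral (justified by an absolute convergence argument after moving the residue to the right of the unfolding, as in Section 2 of \cite{JLZ12}), the expression collapses to a sum over $P_{2n}(F)\backslash \mathrm{Sp}_{4mn+2n}(F)/H(F)$, where $H$ is the stabilizer of the FJ/theta data. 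Each orbit representative is then analyzed using Proposition \ref{prop1} together with the cuspidality of $\wt{\sigma}_{4(m-1)n+2n}$, the cuspidality and genericity of $\tau$, and the hypothesis $\wt{\sigma}_{4(m-1)n+2n}\in \CN'_{\wt{Sp}_{4(m-1)n+2n}}(\tau,\psi)$, which kills the orbits producing an inner $FJ_{\psi^1_{n-1}}$ coefficient of $\wt{\sigma}_{4(m-1)n+2n}$. The surviving orbit identifies $\CD_{2n,\psi^1}^{4mn+2n}(\wt{\CE}_{\tau,\wt{\sigma}_{4(m-1)n+2n}})$ as a residual representation on $\mathrm{Sp}_{4mn}(\BA)$ whose cuspidal support involves $\tau \otimes \wt{\sigma}_{4(m-1)n+2n}$.

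Next I would feed the resulting object into $\CD_{2n, \psi^{-1}}^{4mn}$. A second unfolding, again combined with Proposition \ref{prop1} (now with $a=1$, $b=-1$, so that the quadratic character $\chi_{-1}$ introduced by part (2) of that proposition must be tracked and matched against the normalization $\mu_\psi$ built into the first descent, exactly as in Lemma~4.1 of \cite{GJS12}), strips off the remaining $\tau$-induced factor. The double-coset analysis at this stage again uses the cuspidal support of the intermediate representation and the $\CN'$-property of $\wt{\sigma}_{4(m-1)n+2n}$ to kill all orbits except a distinguished one, which returns $\wt{\sigma}_{4(m-1)n+2n}$ itself. This establishes (2), and then (1) is immediate.

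The main obstacle will be the combinatorial orbit analysis: enumerating representatives for the double cosets in both unfolding steps, verifying that each ``bad'' orbit either collapses to a cuspidal support inconsistent with $\tau$, or yields an internal $FJ_{\psi^1_{n-1}}$ integral of $\wt{\sigma}_{4(m-1)n+2n}$ (which vanishes by the $\CN'$-hypothesis), or forces an internal non-generic Whittaker integral of $\Delta(\tau,k)$ with $k\geq 2$ (which vanishes by Proposition~2.1 of \cite{JL13a}). A secondary technical point is rigorously interchanging the residue at $s=\tfrac{k+1}{2}$ with the Fourier-Jacobi integral; this is handled by performing the unfolding at $\Re(s)$ large where everything converges and then taking residues at the end, in the same spirit as the residue calculations used to construct $\wt{\CE}_{\tau,\wt{\sigma}_{4(m-1)n+2n}}$ in the first place.
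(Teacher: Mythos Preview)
Your proposal diverges from the paper's argument in a way that introduces genuine gaps rather than an alternative valid route.

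First, the paper does \emph{not} prove this by unfolding the Eisenstein series $\wt{E}(\wt{\phi},s)$ and analyzing a double-coset sum $P_{2n}\backslash Sp_{4mn+2n}/H$. It works directly with an automorphic form $\wt{\varphi}$ in the residual representation and manipulates the composed Fourier--Jacobi integral by a sequence of Weyl-element conjugations ($\omega_1$, $\epsilon$, $\omega_2$) and repeated applications of the root-exchange lemma (Lemma~2.3 of \cite{JL13b} for Part~(1), and its equality version Lemma~2.5 for Part~(2)). The endgame is to reduce the integral to the constant term $\wt{\varphi}_{\wt{P}_{2n}}$, which by a computation parallel to Lemma~\ref{constantterm} lies in $\tau|\cdot|^{-m}\otimes \wt{\sigma}_{4(m-1)n+2n}$; the $\tau$-factor then contributes a nonzero Whittaker coefficient, proving Part~(1), and the $\wt{\sigma}$-factor gives the containment needed for Part~(2). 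The two theta series are combined into a single theta series via the identity (5.9) of \cite{GJS12} and then unfolded explicitly. None of this is an orbit analysis of an Eisenstein sum.

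Second, your appeal to Proposition~\ref{prop1} is misplaced. That proposition is a purely \emph{local unramified} statement computing $FJ_{\psi^a_{k-1}}$ of a full induced representation at one place; it is used in the paper only in Theorem~\ref{thm10} to rule out large Fourier coefficients via an unramified-component argument. It cannot ``kill orbits'' in a global unfolding. Moreover, for the second descent $\CD^{4mn}_{2n,\psi^{-1}}$ (from $Sp_{4mn}$ to $\wt{Sp}_{4mn-2n}$) it is part~(1) of Proposition~\ref{prop1} that applies, and no quadratic character $\chi_{b/a}$ appears; your discussion of tracking $\chi_{-1}$ is based on applying the wrong half of the proposition.

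Third, your intermediate claim that $\CD^{4mn+2n}_{2n,\psi^1}(\wt{\CE}_{\tau,\wt{\sigma}_{4(m-1)n+2n}})$ is a ``residual representation on $Sp_{4mn}(\BA)$'' is false: by Theorem~\ref{thm5} (proved later, but true) it is cuspidal. So the picture you have of the first descent producing something with residual structure that the second descent then ``strips'' is not what actually happens.

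The essential ingredients you are missing are the root-exchange machinery of \cite{JL13b}, the analogue of Lemma~\ref{lem2} (which converts $\wt{\varphi}^{\psi_{N_{1^{2n-1}}}}$ into $\wt{\varphi}^{\wt{\psi}_{N_{1^{2n}}}}$, allowing passage to the constant term), and the theta-series identity (5.9) of \cite{GJS12}. Without these, your proposed orbit analysis does not have the tools to isolate the surviving term.
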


\begin{proof}
The proof of Part (1) is similar to that of Theorem \ref{main1part2}
and to that of Theorem 2.1 of \cite{GJS12}.
The proof of Part (2) is similar to those of Theorem 5.1 and Proposition 
5.2 of \cite{GJS12}.

\textbf{Proof of Part (1)}.
Note that descents $\CD_{2n, \psi^{1}}^{4mn+2n}$
and $\CD_{2n, \psi^{-1}}^{4mn}$ are defined in 
Section 3.2 of \cite{GRS11}.

By Corollary 2.4 of \cite{JL13b} or Lemma 1.1 of \cite{GRS03}, and the discussion at the end of Section 1 of \cite{GRS03},
$\CD_{2n, \psi^{-1}}^{4mn} \circ \CD_{2n, \psi^{1}}^{4mn+2n}
(\wt{\CE}_{\tau, \wt{\sigma}_{4(m-1)n+2n}}) \neq 0$
if and only if the following integral is non-vanishing:
\begin{align}\label{thm7equ1}
\begin{split}
& \int_{[Y_2 V_{[(2n)1^{4mn-2n}],2}]} \int_{[Y_1 V_{[(2n)1^{4mn}],2}]}
\wt{\varphi}(v v_1 g) \\
& \psi_{[(2n)1^{4mn}],1}^{-1}(v) 
\psi_{[(2n)1^{4mn-2n}],-1}^{-1}(v_1) dv dv_1,
\end{split}
\end{align}
where $\wt{\varphi} \in \wt{\CE}_{\tau, \wt{\sigma}_{4(m-1)n+2n}}$,
$g \in \wt{Sp}_{4mn-2n}(\BA)$, embedded into $\wt{Sp}_{4mn+2n}(\BA)$
via the map $g \mapsto \diag(I_{2n}, g, I_{2n})$;
$Y_1$, $Y_2$ are the groups defined in (2.5) of \cite{JL13b}
corresponding to the partitions $[(2n)1^{4mn}]$ and $[(2n)1^{4mn-2n}]$
respectively; $V_{[(2n)1^{4mn-2n}],2}$ and $V_{[(2n)1^{4mn}],2}$ are defined in Section 2 of \cite{JL13b}.

Explicitly, let  
$N_{1^n}$ be the unipotent radical of the parabolic ${P}_{1^n}$
of $Sp_{4mn+2n}$
with the Levi subgroup isomorphic to $GL_1^n \times {Sp}_{4mn}$,
then $Y_1 V_{[(2n)1^{4mn}],2}$ is a subgroup of $N_{1^n}$
consists of elements $v$ with $v_{n, j}=0$,
for $n+1 \leq j \leq 2mn+n$.
$\psi_{[(2n)1^{4mn}],1}(v) = \psi(\sum_{i=1}^{n-1} v_{i,i+1} + v_{n,4mn+n+1})$.

Identify $Sp_{4mn}$ with its embedding into $Sp_{4mn+2n}$
via the map $g \mapsto \diag(I_n, g, I_n)$.
Let  
$N_{1^n}$ be the unipotent radical of the parabolic ${P}_{1^n}$
of $Sp_{4mn}$
with the Levi subgroup isomorphic to $GL_1^n \times {Sp}_{4mn-2n}$,
then $Y_1 V_{[(2n)1^{4mn-2n}],2}$ is a subgroup of $N_{1^n}$
consists of elements $v$ with $v_{n, j}=0$,
for $n+1 \leq j \leq 2mn$. 
$\psi_{[(2n)1^{4mn-2n}],-1}(v) = \psi(\sum_{i=1}^{n-1} v_{i,i+1} - v_{n,4mn-n+1})$.

Let $\wt{\omega}$ be the Weyl element of $GL_{2n}$
defined in (4.31) of \cite{GRS99}:
\begin{align*}
\wt{\omega}_{2i,i}=1, & \text{ } i=1, \ldots, n,\\
\wt{\omega}_{2i-1,i+n}=1, & \text{ } i=1, \ldots, n,\\
\wt{\omega}_{i,j}=0, & \text{ } \text{otherwise.}
\end{align*}
Let 
\begin{equation}\label{thm7equ2}
\omega_1 = \begin{pmatrix}
\wt{\omega} &  & \\
& I_{4mn-2n} &\\
&& \wt{\omega}^*
\end{pmatrix} \in Sp_{4mn+2n}(F).
\end{equation}
As in \cite{GJS12}, we identify 
$Sp_{4mn+2n}(F)$ with the subgroup $Sp_{4mn+2n}(F) \times 1$
of $\wt{Sp}_{4mn+2n}(\BA)$.

Conjugating cross the integral in \ref{thm7equ1}
by $\omega_1$, it becomes:
\begin{align}\label{thm7equ3}
\int_{[W_1]} \wt{\varphi}(w \omega_1 g) \psi_{W_1}^{-1}(w) dw,
\end{align}
where $W_1 = \omega_1 Y_2 V_{[(2n)1^{4mn-2n}],2}
Y_1 V_{[(2n)1^{4mn}],2} \omega_1^{-1}$, 
if $\omega_1^{-1} w \omega_1 = v_1 v$, $v_1 \in Y_2 V_{[(2n)1^{4mn-2n}],2}$,
$v \in Y_1 V_{[(2n)1^{4mn}],2}$,
then 
$$\psi_{W_1}(w)=\psi_{[(2n)1^{4mn}],1}(v) 
\psi_{[(2n)1^{4mn-2n}],-1}(v_1).$$

Note that the metaplectic cover splits over $W_1(\BA)$,
and $W_1(\BA) \times 1$ is a subgroup of $\wt{Sp}_{4mn+2n}(\BA)$.
We identify $W_1$ with $W_1 \times 1$.

Elements in $W_1$ are of the following form
\begin{equation}\label{thm7equ29}
w=\begin{pmatrix}
Z & q_1 & q_2\\
& I_{4mn-2n} & q_1^*\\
&&Z^*
\end{pmatrix},
\end{equation}
where $q_1(i,j)=0$, for $i=2n-1, 2n$, $1 \leq j \leq 2mn-n$,
$Z \in GL_{2n}$ has the form (4.34) of \cite{GRS99}.
Write $Z$ as an $n \times n$ matrix of $2 \times 2$ block matrices
$Z = ([Z]_{i,j})$, $1 \leq i, j \leq n$,
then $[Z]_{n,1} = \cdots = [Z]_{n,n-1}=0$,
$[Z]_{n,n}=I_2$; $[Z]_{i,i}$ is lower unipotent, for $i < n$;
$[Z]_{i,j}$ is lower triangular, for $i < j$;
$[Z]_{i,j}$ is lower nilpotent, for $j < i < n$.
And 
\begin{equation}\label{thm7equ4}
\psi_{W_1}(w) = \psi(\sum_{i=1}^{n-1} \tr([Z]_{i,i+1})+(q_2(2n,1)-q_2(2n-1,2))).
\end{equation}

Let $R^1_i=\prod_{j=1}^{i} R^1_{i,j}$, for $1 \leq i \leq n-1$,
with $R^1_{i,j} = X_{\alpha_{i,j}}$, the root
subgroup corresponding to the root $\alpha_{i,j} = e_{2i} -e_{2(j-1)+1}$.
Let $R^1 = \prod_{i=1}^{n-1} R^1_i$.
Actually $R^1$ is the subgroup of $W_1$ consists of lower unipotent 
matrices. Write $W_1 = R^1 \wt{W}_1$, with $R^1 \cap \wt{W}_1 = \{1\}$.

Let $C^1_i=\prod_{j=1}^{i} C^1_{i,j}$, for $1 \leq i \leq n-1$,
with $C^1_{i,j} = X_{\beta_{i,j}}$, the root
subgroup corresponding to the root $\beta_{i,j} = e_{2(j-1)+1} -e_{2(i+1)}$.
%Let $C^1 = \prod_{i=1}^{n-1} C^1_i$.

We consider the quadruple
\begin{equation}\label{thm7equ5}
(\wt{W}_1 \prod_{i=1}^{n-2} R^1_i \prod_{j=2}^{n-1} R^1_{n-1,j},
\psi_{W_1}, R^1_{n-1,1}, C^1_{n-1,1}).
\end{equation}
It is easy to see that this quadruple satisfies all the conditions
for Lemma 2.3 of \cite{JL13b}. Hence, by Lemma 2.3 of \cite{JL13b},
the integral in \eqref{thm7equ3} is non-vanishing if and only
if the following integral is non-vanishing:
\begin{align}\label{thm7equ6}
\int_{[C^1_{n-1,1}\wt{W}_1 \prod_{i=1}^{n-2} R^1_i \prod_{j=2}^{n-1} R^1_{n-1,j}]} 
\wt{\varphi}(cwr \omega_1 g) \psi_{W_1}^{-1}(w) drdwdc.
\end{align}

We continue to consider the following sequence of quadruples:
\begin{align}\label{thm7equ7}
\begin{split}
& (C^1_{n-1,1}\wt{W}_1 \prod_{i=1}^{n-1} R^1_i \prod_{j=3}^{n-1} R^1_{n-1,j},
\psi_{W_1}, R^1_{n-1,2}, C^1_{n-1,2}),\\
& (\prod_{k=1}^2 C^1_{n-1,k}\wt{W}_1 \prod_{i=1}^{n-2} R^1_i \prod_{j=4}^{n-1} R^1_{n-1,j},
\psi_{W_1}, R^1_{n-1,3}, C^1_{n-1,3}),\\
& \cdots,\\
& (\prod_{k=1}^{n-2} C^1_{n-1,k}\wt{W}_1 \prod_{i=1}^{n-2} R^1_i ,
\psi_{W_1}, R^1_{n-1,n-1}, C^1_{n-1,n-1}).
\end{split}
\end{align}
Applying Lemma 2.3 of \cite{JL13b} $(n-1)$ times, we get that the integral
in \eqref{thm7equ6} is non-vanishing if and only if
the following integral is non-vanishing:
\begin{align}\label{thm7equ8}
\int_{[C^1_{n-1} \wt{W}_1 \prod_{i=1}^{n-2} R^1_i]} 
\wt{\varphi}(cwr \omega_1 g) \psi_{W_1}^{-1}(w) drdwdc.
\end{align}

Then, we repeat the above procedure for the pairs
$$(R^1_{n-2}, C^1_{n-2}), \ldots, (R^1_1, C^1_1).$$
For example, after repeating the above procedure for 
the pairs 
$$(R^1_{n-2}, C^1_{n-2}), \ldots, (R^1_{s+1}, C^1_{s+1}),$$
we need the following sequence of quadruples for the pair
$(R^1_s, C^1_s)$:
\begin{align}\label{thm7equ9}
\begin{split}
& (\prod_{i=s+1}^{n-1} C^1_i \wt{W}_1 \prod_{i=1}^{s-1} R^1_i \prod_{j=2}^{s} R^1_{s,j},
\psi_{W_1}, R^1_{s,1}, C^1_{s,1}),\\
& (\prod_{i=s+1}^{n-1} C^1_i C^1_{s,1} \wt{W}_1 \prod_{i=1}^{s-1} R^1_i \prod_{j=3}^{s} R^1_{s,j},
\psi_{W_1}, R^1_{s,2}, C^1_{s,2}),\\
& \cdots,\\
& (\prod_{i=s+1}^{n-1} C^1_i \prod_{k=1}^{s-1} C^1_{s,k} \wt{W}_1 \prod_{i=1}^{s-1} R^1_i,
\psi_{W_1}, R^1_{s,s}, C^1_{s,s}).
\end{split}
\end{align}
After applying Lemma 2.3 of \cite{JL13b} $s$ times, 
the integral in \eqref{thm7equ8} is non-vanishing if and only if 
the following integral is non-vanishing:
\begin{align}\label{thm7equ10}
\int_{[\prod_{i=s}^{n-1} C^1_i \wt{W}_1 \prod_{j=1}^{s-1} R^1_j]} 
\wt{\varphi}(cwr \omega_1 g) \psi_{W_1}^{-1}(w) drdwdc.
\end{align}

After repeating the above procedure for all the pairs
$$(R^1_{n-2}, C^1_{n-2}), \ldots, (R^1_1, C^1_1),$$
we will see that the integral in 
\eqref{thm7equ8} is non-vanishing if and only if 
the following integral is non-vanishing:
\begin{align}\label{thm7equ11}
\int_{[\prod_{i=1}^{n-1} C^1_i \wt{W}_1]} 
\wt{\varphi}(cw \omega_1 g) \psi_{W_1}^{-1}(w) dwdc.
\end{align}
Note that $\prod_{i=1}^{n-1} C^1_i \wt{W}_1 = \omega_1 V_{[(2n)^2 1^{4mn-2n}],2} \omega_1^{-1}$,
and $\psi_{\prod_{i=1}^{n-1} C^1_i\wt{W}_1}(cw)=\psi_{W_1}(w)=\psi_{[(2n)^2 1^{4mn-2n}], \{-1, 1\}}
(\omega_1^{-1} cw \omega_1)$.

Let 
$A = 
\begin{pmatrix}
1 & -1\\
1 & 1
\end{pmatrix}$, and 
$\epsilon=\diag(A, \ldots, A; I_{(4m-4)n}; A^*, \ldots, A^*)$,
as in (2.31) of \cite{GJS12}.
Conjugating cross the integral in \eqref{thm7equ11} by 
$\epsilon$, it becomes:
\begin{align}\label{thm7equ12}
\int_{[W_2]} 
\wt{\varphi}(w \epsilon \omega_1 g) \psi_{W_2}^{-1}(w) dw,
\end{align}
where $W_2 = \epsilon \prod_{i=1}^{n-1} C^1_i\wt{W}_1 \epsilon^{-1}$,
$\psi_{W_2}(w)=\psi_{\prod_{i=1}^{n-1} C^1_i\wt{W}_1}(\epsilon^{-1} w \epsilon)$.

Elements in $W_2$ are of the following form
\begin{equation}\label{thm7equ35}
w=\begin{pmatrix}
Z & q_1 & q_2\\
& I_{4mn-2n} & q_1^*\\
&&Z^*
\end{pmatrix},
\end{equation}
where $q_1(i,j)=0$, for $i=2n-1, 2n$, $1 \leq j \leq 2mn-n$,
$Z$ is in the unipotent radical of the parabolic subgroup of
$GL_{2n}$ with the Levi subgroup isomorphic to $GL_2^n$.
And 
\begin{equation}\label{thm7equ13}
\psi_{W_2}(w) = \psi(\sum_{i=1}^{2n-2} Z_{i,i+2}+q_2(2n-1,1)).
\end{equation}

Let $\nu=\begin{pmatrix}
\nu_1 & \nu_2\\
\nu_3 & \nu_4
\end{pmatrix}$ be the Weyl element in \eqref{main1equ6}.
Let 
\begin{equation}\label{thm7equ37}
\omega_2 = 
\begin{pmatrix}
\nu_1 & & \nu_2\\
& I_{4mn-2n} & \\
\nu_3 & & \nu_4
\end{pmatrix},
\end{equation}
a Weyl element of $\Sp_{4mn+2n}$.

Conjugating cross the integral in \eqref{thm7equ12}
by $\omega_2$, it becomes:
\begin{equation}\label{thm7equ14}
\int_{[W_3]} 
\wt{\varphi}(w \omega_2 \epsilon \omega_1 g) \psi_{W_3}^{-1}(w) dw,
\end{equation}
where $W_3 = \omega_2 W_2 \omega_2^{-1}$,
$\psi_{W_3}(w)=\psi_{W_2}(\omega_2^{-1} w \omega_2)$.

Elements in $W_3$
have the following form:
\begin{equation}\label{thm7equ15}
w = 
\begin{pmatrix}
Z & q_1 & q_2\\
0 & I_{4mn-2n} & q_1^*\\
0 & 0 & Z^*
\end{pmatrix}
\begin{pmatrix}
I_{2n} & 0 & 0\\
p_1 & I_{4mn-2n} & 0\\
p_2 & p_1^* & I_{2n}
\end{pmatrix},
\end{equation}
where $Z$ is in the standard maximal unipotent subgroup of $GL_{2n}$;
$q_1(i,j)=0$, for $n+1 \leq i \leq 2n$, and for $i=n$, $1 \leq j \leq 2mn-n$;
$q_2(i,j)=p_2(i,j)=0$, for $1 \leq j \leq i \leq 2n$;
$p_1(i,j)=0$, for $1 \leq j \leq n$, and for $j=n+1$, $n+1 \leq i \leq 2n$.

Let $\wt{C}$ be the unipotent subgroup consisting of 
elements of the following form:
\begin{equation*}
\begin{pmatrix}
I_{2n} & 0 & 0\\
p_1 & I_{4mn-2n} & 0\\
0 & p_1^* & I_{2n}
\end{pmatrix},
\end{equation*}
where $p_1(i,j)=0$ for any $i,j$, except
$j=n+1$, $1 \leq i \leq 2mn-n$.
Let $\wt{R}$ be the unipotent subgroup consisting of 
elements of the following form:
\begin{equation*}
\begin{pmatrix}
I_{2n} & q_1 & 0\\
0 & I_{4mn-2n} & q_1^*\\
0 & 0 & I_{2n}
\end{pmatrix},
\end{equation*}
where $q_1(i,j)=0$ for any $i,j$, except
$i=n$, $1 \leq j \leq 2mn-n$.

Write $W_3 = \wt{C} \wt{W}_3$, with $\wt{C} \cap \wt{W_3}=\{1\}$.
Consider the quadruple $(\wt{W}_3, \psi_{W_3}, \wt{R}, \wt{C})$.
It is easy to see that this quadruple satisfies all the conditions
of Lemma 2.3 of \cite{JL13b}. Hence, by Lemma 2.3 of \cite{JL13b},
the integral \eqref{thm7equ14} is non-vanishing if and only
if the following integral is non-vanishing:
\begin{equation}\label{thm7equ16}
\int_{[W_4]} 
\wt{\varphi}(w \omega_2 \epsilon \omega_1 g) \psi_{W_4}^{-1}(w) dw,
\end{equation}
where $W_4=\wt{R} \wt{W}_3$, for $w= rw'$, $r \in \wt{R}$,
$w \in \wt{W}_3$, $\psi_{W_4}(rw') = \psi_{W_3}(w')$.

Let $R^2_i = \prod_{j=1}^i X_{\alpha^i_j}$, for $1 \leq i \leq n$,
with $\alpha^i_j = e_i+e_{2n-i+j}$, 
and $R^2_i = \prod_{j=1}^{2n-i} X_{\alpha^i_j}$, for $n+1 \leq i \leq 2n-1$,
with $\alpha^i_j = e_i+e_{i+j}$. 
Let $C^2_i = \prod_{j=1}^i X_{\beta^i_j}$, for $1 \leq i \leq n$,
with $\beta^i_j = -e_{2n-i+j}-e_{i+1}$, 
and $C^2_i = \prod_{j=1}^{2n-i} X_{\beta^i_j}$, for $n+1 \leq i \leq 2n-1$,
with $\beta^i_j = -e_{i+j}-e_{i+1}$.

For $n+1 \leq i \leq 2n-1$, let $R^3_i$ be the unipotent subgroup consisting of 
elements of the following form:
\begin{equation*}
\begin{pmatrix}
I_{2n} & q_1 & 0\\
0 & I_{4mn-2n} & q_1^*\\
0 & 0 & I_{2n}
\end{pmatrix},
\end{equation*}
where $q_1(k,j)=0$ for any $k,j$, except
$k=i$, $1 \leq j \leq 4mn-2n$.
For $n+1 \leq i \leq 2n-1$, let $C^3_i$ 
be the unipotent subgroup consisting of 
elements of the following form:
\begin{equation*}
\begin{pmatrix}
I_{2n} & 0 & 0\\
p_1 & I_{4mn-2n} & 0\\
0 & p_1^* & I_{2n}
\end{pmatrix},
\end{equation*}
where $p_1(k,j)=0$ for any $k,j$, except
$j=i+1$, $1 \leq k \leq 4mn-2n$.
Write $W_4 = \prod_{i=1}^{2n-1} C^2_i \prod_{j=n+1}^{2n-1} C^3_j \wt{W}_4$,
with $\prod_{i=1}^{2n-1} C^2_i \prod_{j=n+1}^{2n-1} C^3_j \cap \wt{W}_4 = \{1\}$.

Then, we apply Lemma 2.3 of \cite{JL13b} to the pairs
$$(R^2_1, C^2_1), (R^2_2, C^2_2), \ldots, (R^2_n, C^2_n).$$
For example, for $(R^2_s, C^2_s)$, $1 \leq s \leq n$,
we need to consider the following sequence of 
quadruples:
\begin{align*}
&(\prod_{i=1}^{s-1} R^2_i \prod_{j=n+1}^{2n-1} C^3_j \wt{W}_4 \prod_{t=s+1}^{2n-1}
C^2_t \prod_{l=2}^s X_{\beta^s_l}, \psi_{W_4}, X_{\beta^s_1}, X_{\alpha^s_1}),\\
& (\prod_{i=1}^{s-1} R^2_i X_{\alpha^s_1} \prod_{j=n+1}^{2n-1} C^3_j \wt{W}_4 \prod_{t=s+1}^{2n-1}
C^2_t \prod_{l=3}^s X_{\beta^s_l}, \psi_{W_4}, X_{\beta^s_2}, X_{\alpha^s_2}),\\
& \cdots,\\
& (\prod_{i=1}^{s-1} R^2_i \prod_{k=1}^{s-1} X_{\alpha^s_k} \prod_{j=n+1}^{2n-1} C^3_j \wt{W}_4 \prod_{t=s+1}^{2n-1}
C^2_t, \psi_{W_4}, X_{\beta^s_s}, X_{\alpha^s_s}).\\
\end{align*}

After applying Lemma 2.3 of \cite{JL13b} to all the pairs
$$(R^2_1, C^2_1), (R^2_2, C^2_2), \ldots, (R^2_n, C^2_n),$$
we get that the integral \eqref{thm7equ16} is non-vanishing if and only
if the following integral is non-vanishing:
\begin{equation}\label{thm7equ17}
\int_{[\prod_{i=1}^{n} R^2_i \wt{W}_4 \prod_{t=n+1}^{2n-1}
C^2_t C^3_t]} 
\wt{\varphi}(rwc \omega_2 \epsilon \omega_1 g) \psi_{W_4}^{-1}(w) dcdwdr,
\end{equation}

Next, we apply Lemma 2.3 of \cite{JL13b} to the pairs
$$
(R^2_{n+1}, C^2_{n+1}), (R^3_{n+1}, C^3_{n+1}); \cdots;
(R^2_{2n-1}, C^2_{2n-1}), (R^3_{2n-1}, C^3_{2n-1}).
$$
Note that before applying Lemma 2.3 of \cite{JL13b} to
each pair $(R^2_{s}, C^2_{s})$, $n+1 \leq s \leq 2n-1$, 
we need to take the Fourier expansion along the one-dimensional
root subgroup $X_{e_s+e_s}$, as in the proof of Theorem \ref{main1part2}.

For example, for $s=n+1$, we first take the Fourier expansion of the integral
in \eqref{thm7equ17} along the one-dimensional root subgroup
$X_{e_s+e_s}$. Under the action of $GL_1$, we get two kinds of Fourier coefficients corresponding 
to the two orbits of the dual of $[X_{e_s+e_s}]$: the trivial one and the non-trivial one.
For any Fourier coefficient attached to the non-trivial orbit,
we can see that there is an inner integral $\varphi_{[(2n+2)1^{4mn-2}],\{a\}}$, which is identically zero 
by (1) in the proof of Theorem \ref{thm10}.
Therefore only the Fourier coefficient attached to the trivial orbit,
which actually equals to the integral in \eqref{thm7equ17},
survives. 

After applying Lemma 2.3 of \cite{JL13b} to all the pairs
$$
(R^2_{n+1}, C^2_{n+1}), (R^3_{n+1}, C^3_{n+1}); \cdots;
(R^2_{2n-1}, C^2_{2n-1}), (R^3_{2n-1}, C^3_{2n-1}),
$$
the integral \eqref{thm7equ17} is non-vanishing if and only
if the following integral is non-vanishing:
\begin{equation}\label{thm7equ18}
\int_{[\prod_{i=1}^{2n-1} R^2_i \prod_{t=n+1}^{2n-1} R^3_t X_{e_t+e_t} \wt{W}_4]} 
\wt{\varphi}(rxw \omega_2 \epsilon \omega_1 g) \psi_{W_4}^{-1}(w) dwdxdr,
\end{equation}

Note that elements in 
$\prod_{i=1}^{2n-1} R^2_i \prod_{t=n+1}^{2n-1} R^3_t X_{e_t+e_t} \wt{W}_4$
have the following form:
\begin{equation}\label{thm7equ19}
w = 
\begin{pmatrix}
Z & q_1 & q_2\\
0 & I_{4mn-2n} & q_1^*\\
0 & 0 & Z^*
\end{pmatrix},
\end{equation}
where $Z$ is in the standard maximal unipotent subgroup of $GL_{2n}$;
the last row of $q_1$ is zero. 
And $\psi_{W_4}(\begin{pmatrix}
Z & 0 & 0\\
0 & I_{4mn-2n} & 0\\
0 & 0 & Z^*
\end{pmatrix}) = \psi(\sum_{i=1}^{n-1}Z_{i,i+1} - \sum_{j=n}^{2n-1} Z_{i,i+1})$.

Clearly the integral \eqref{thm7equ18} is non-vanishing if and only
if the following integral is non-vanishing:
\begin{equation}\label{thm7equ20}
\int_{[\prod_{i=1}^{2n-1} R^2_i \prod_{t=n+1}^{2n-1} R^3_t X_{e_t+e_t} \wt{W}_4]} 
\wt{\varphi}(rxw \omega_2 \epsilon \omega_1 g) \psi_{W_4}^{'-1}(w) dwdxdr,
\end{equation}
where $\psi_{W_4}'(\begin{pmatrix}
Z & 0 & 0\\
0 & I_{4mn-2n} & 0\\
0 & 0 & Z^*
\end{pmatrix}) = \psi(\sum_{i=1}^{2n-1}Z_{i,i+1})$.
Note that the integral \eqref{thm7equ20}
is exactly $\wt{\varphi}^{\psi_{N_{1^{2n-1}}}}$, using notation in Lemma \ref{lem2}.
On the other hand, we know that by Lemma \ref{lem2},
$\wt{\varphi}^{\psi_{N_{1^{2n-1}}}} = \wt{\varphi}^{\wt{\psi}_{N_{1^{2n}}}}$.
Note that Lemma \ref{lem2} also applies to metaplectic groups.

Therefore, the
integral in \eqref{thm7equ20}
becomes
\begin{equation}\label{thm7equ21}
\int_{[\wt{U}]} \wt{\varphi}(u \omega_2 \epsilon \omega_1 g) \psi_{\wt{U}}(u)^{-1} du,
\end{equation} 
where any element in $\wt{U}$
has the following form:
$$
u = u(Z, q_1, q_2) =
\begin{pmatrix}
Z & q_1 & q_2\\
0 & I_{4mn-2n} & q_1^*\\
0 & 0 & Z^*
\end{pmatrix},
$$
where $Z$ is in the standard maximal unipotent subgroup of 
$GL_{2n}$,
$q_1 \in M_{(2n)\times (4m-4)n}$, $q_2 \in M_{(2n)\times (2n)}$, such that $q_2^t v_{2n} - v_{2n} q_2 = 0$,
where $v_{2n}$ is a matrix only with ones on the second diagonal.
$\psi_{\wt{U}}(u) = \psi(\sum_{i=1}^{2n-1} u_{i,i+1})$.

Hence, the integral in \eqref{thm7equ21} can be written as
\begin{equation}\label{thm7equ22}
\int_{u(Z, 0, 0)} \wt{\varphi}(u \omega_2 \epsilon \omega_1 g)_{\wt{P}_{2n}} \psi_{\wt{U}}(u)^{-1} du,
\end{equation} 
where $\wt{\varphi}_{P_{2n}}$ is the constant term of 
$\wt{\varphi}$ along the pre-image of the parabolic subgroup 
$P_{2n}=M_{2n}U_{2n}$ of $\Sp_{4mn+2n}$ with 
Levi isomorphic to $GL_{2n} \times Sp_{(4m-2)n}$.

By the similar calculation as in the proof of Lemma \ref{constantterm},
or the calculation at the end of Theorem 2.1 of \cite{GJS12},
there is an automorphic function
$$
f \in A(N_{2n}(\BA)\wt{M}_{2n}(F)\bs \wt{Sp}_{4mn+2n}(\BA))_{\tau \lvert \cdot \rvert^{-m}
\otimes \wt{\sigma}_{4(m-1)n+2n}},
$$
such that 
$$
\wt{\varphi}(g)_{\wt{P}_{2n}} = f(g), \forall g \in \wt{Sp}_{4mn+2n}(\BA).
$$

Therefore, the integral \eqref{thm7equ22} is the Whittaker
Fourier coefficient of an element in $\tau$, hence not identically zero.
This completes the proof of Part (1).

\textbf{Proof of Part (2)}.
By definition of Fourier-Jacobi coefficients ((3.14) of \cite{GRS11}),
for $\phi_1 \in \CS(\BA^{2mn-n})$, $\phi_2 = \phi_{21} \otimes \phi_{22}$,
$\phi_{21} \in \CS(\BA^n)$, $\phi_{22} \in \CS(\BA^{2mn-n})$,
we need to compute the composition of two Fourier-Jacobi coefficients
$FJ^{\phi_1}_{\psi^{-1}_{n-1}}$ and $FJ^{\phi_2}_{\psi^{1}_{n-1}}$:
\begin{align}\label{thm7equ23}
\begin{split}
& FJ^{\phi_1}_{\psi^{-1}_{n-1}} \circ FJ^{\phi_2}_{\psi^{1}_{n-1}}(\wt{\xi})(g)\\
= & \int_{[V_{[(2n)1^{4mn-2n}],1}]} \int_{[V_{[(2n)1^{4mn}],1}]} 
\wt{\varphi}(uvg) \theta^{2mn, \phi_2}_{\psi^{-1}}(l_2(u)vg) \psi_{[(2n)1^{4mn}], 1}^{-1}(u)du\\
& \theta^{2mn-n, \phi_1}_{\psi^{1}}(l_1(v)g) \psi_{[(2n)1^{4mn-2n}], -1}^{-1}(v)dv,
\end{split}
\end{align}
where $\wt{\varphi} \in \wt{\CE}_{\tau, \wt{\sigma}_{4(m-1)n+2n}}$,
$g \in  \wt{Sp}_{4mn-2n}(\BA)$, 
the theta series are defined in Section 1.2 \cite{GRS11}.
$V_{[(2n)1^{4mn-2n}],1}$ and 
$V_{[(2n)1^{4mn}],1}$,  $\psi_{[(2n)1^{4mn}], 1}$
and $\psi_{[(2n)1^{4mn-2n}], -1}$
are defined in Section 2 of \cite{JL13b}. $V_{[(2n)1^{4mn-2n}],1}$ and 
$V_{[(2n)1^{4mn}],1}$ are as $N_n$ in (3.14) of \cite{GRS11}. 
Explicitly,
$V_{[(2n)1^{4mn}],1}$ is the unipotent radical of the parabolic
subgroup $P_{1^n}^{4mn+2n}$ of $Sp_{4mn+2n}$ with Levi subgroup isomorphic to
$GL_1^n \times Sp_{4mn}$. $V_{[(2n)1^{4mn-2n}],1}$ is the 
unipotent radical of the parabolic
subgroup $P_{1^n}^{4mn}$ of $Sp_{4mn}$,
with Levi subgroup isomorphic to
$GL_1^n \times Sp_{4mn-2n}$. Note that $Sp_{4mn}$ is embedded
into $Sp_{4mn+2n}$ via the map $g \mapsto \diag(I_n, g, I_n)$,
and we identify it with the image. 
Then for $u \in V_{[(2n)1^{4mn}],1}$,
$\psi_{[(2n)1^{4mn}], 1}(u) = \psi(\sum_{i=1}^{n-1} u_{i,i+1})$,
$l_2(u) = \prod_{i=1}^{2mn} X_{\alpha_i}(u_{n,n+i})$,
with $\alpha_i = e_n-e_{n+i}$.
And for $v \in V_{[(2n)1^{4mn-2n}],1}$, 
$\psi_{[(2n)1^{4mn-2n}], -1}(v) = \psi(\sum_{i=1}^{n-1} v_{n+i, n+i+1})$,
$l_1(v) = \prod_{i=1}^{2mn-n} X_{\beta_i}(v_{2n, 2n+i})$,
with $\beta_i = e_{2n}-e_{2n+i}$.

First, we want to unfold the theta series  
$\theta^{2mn, \phi_2}_{\psi^{-1}}(l_2(u)vg)$. 
Write $l_2(u)$ as $l_2(u)=(q_1, q_2, q_3; z)$,
where $q_1, q_3 \in \BA^n$, $q_2 \in \BA^{4mn-2n}$, 
$z \in \BA$. Then
\begin{align}\label{thm7equ24}
\begin{split}
& \theta^{2mn, \phi_2}_{\psi^{-1}}(l_2(u)vg)\\
= & \sum_{\xi \in F^{2mn}} \omega_{\psi^{-1}}^{2mn} (l_2(u)vg)\phi_2(\xi)\\
= & \sum_{\xi_1 \in F^n, \xi_2 \in F^{2mn-n}} \omega_{\psi^{-1}}^{2mn} 
((q_1, q_2, q_3; z)vg)\phi_2(\xi_1, \xi_2)\\
= & \sum_{\xi_1 \in F^n, \xi_2 \in F^{2mn-n}} \omega_{\psi^{-1}}^{2mn} 
((\xi_1, 0, 0; 0)(q_1, q_2, q_3; z)vg)\phi_2(0, \xi_2)\\
= & \sum_{\xi_1 \in F^n, \xi_2 \in F^{2mn-n}} \omega_{\psi^{-1}}^{2mn} 
((q_1 + \xi_1, q_2, q_3; z + \xi_1 \nu_n q_3')vg)\phi_2(0, \xi_2)\\
= & \sum_{\xi_1 \in F^n, \xi_2 \in F^{2mn-n}} \omega_{\psi^{-1}}^{2mn} 
((0, q_2, q_3; z + \wt{\xi}_1 )(q_1 + \xi_1, 0, 0; 0)vg)\phi_2(0, \xi_2)
\end{split}
\end{align}
\begin{align*}
= & \sum_{\xi_1 \in F^n, \xi_2 \in F^{2mn-n}} \omega_{\psi^{-1}}^{2mn} 
((0, \wt{q}_2, q_3; z + \wt{\xi}_1 )vg(q_1 + \xi_1, 0, 0; 0))\phi_2(0, \xi_2)\\
\end{align*}
where $\nu_n$ is the matrix only with 1's on the second diagonal,
$\wt{\xi}_1 = 2 \xi_1 \nu_n q_3'+ q_3 \nu_n q_1'$, $\wt{q}_2 = q_2 + (q_1+\xi_1)(n)v$,
$(q_1+\xi_1)(n)$ is the $n$-th coordinate of the vector $q_1+\xi_1$.
Note that $(q_1 + \xi_1, 0, 0; 0))$ commutes with $g$.

Plugging \eqref{thm7equ24} into the integral in \eqref{thm7equ23}, 
collapsing the summation over $\xi_1$ with the integration over $q_1$,
changing variables for $q_2$ and $z$, we will have

\begin{align}\label{thm7equ25}
\begin{split}
& FJ^{\phi_1}_{\psi^{-1}_{n-1}} \circ FJ^{\phi_2}_{\psi^{1}_{n-1}}(\wt{\xi})(g)\\
= & \int_{[V_{[(2n)1^{4mn-2n}],1}]} \int_{[V_{[(2n)1^{4mn}],1}']} 
\int_{\BA^n}
\wt{\varphi}(uvg\hat{q}_1) \\
&\sum_{\xi_2 \in F^{2mn-n}} \omega_{\psi^{-1}}^{2mn} 
(l_2(u)vg(q_1, 0, 0; 0))\phi_2(0, \xi_2)
\psi_{[(2n)1^{4mn}], 1}^{-1}(u)du\\
& \theta^{2mn-n, \phi_1}_{\psi^{1}}(l_1(v)g) \psi_{[(2n)1^{4mn-2n}], -1}^{-1}(v)dq_1dv,
\end{split}
\end{align}
where $\hat{q}_1 = \prod_{i=1}^{n} X_{\alpha_i}(q_1(n,n+i))$,
with $\alpha_i = e_n-e_{n+i}$; $V_{[(2n)1^{4mn}],1}'$ is a subgroup
of $V_{[(2n)1^{4mn}],1}$ consisting of elements $u$ with
$u_{n,n+i}=0$, for $1 \leq i \leq n$; $l_2(u) = (0, q_2, q_3; z)$. Note that, 
$V_{[(2n)1^{4mn}],1}'$ is actually $Y V_{[(2n)1^{4mn}],2}$, where $Y$ is defined 
in (2.5) of \cite{JL13b} corresponding to the partition
$[(2n)1^{4mn}]$, and $V_{[(2n)1^{4mn}],2}$ is defined Section 2 of \cite{JL13b}.
For short, we will write $q_1$ for $(q_1, 0, 0; 0)$.

By Formula (1.4) \cite{GRS11}, 
\begin{align}\label{thm7equ26}
\begin{split}
& \sum_{\xi_2 \in F^{2mn-n}} \omega_{\psi^{-1}}^{2mn} ((0, q_2, q_3; z)vg(q_1, 0, 0; 0))\phi_2(0, \xi_2)\\
= & \sum_{\xi_2 \in F^{2mn-n}} \omega_{\psi^{-1}}^{2mn} ((0, q_2, 0; z)vg(q_1, 0, 0; 0))\phi_2(0, \xi_2)\\
= & \sum_{\xi_2 \in F^{2mn-n}} \omega_{\psi^{-1}}^{2mn} ((0, q_2, 0; z)vg(q_1, 0, 0; 0))\phi_2(q_1, \xi_2)\\
= & \theta^{2mn-n, \phi_{22}}_{\psi^{-1}}((q_2, z)vg) \phi_{21}(q_1),
\end{split}
\end{align}
since we assumed that $\phi_2 = \phi_{21} \otimes \phi_{22}$.
Let $l_3(u) = (q_2, z)$, for $u \in V_{[(2n)1^{4mn}],1}'$.
Then, the integral in \eqref{thm7equ25} becomes
\begin{align}\label{thm7equ27}
\begin{split}
& \int_{[V_{[(2n)1^{4mn-2n}],1}]} \int_{[V_{[(2n)1^{4mn}],1}']} 
\int_{\BA^n}
\wt{\varphi}(uvg\hat{q}_1) \phi_{21}(q_1) dq_1\\
&\theta^{2mn-n, \phi_{22}}_{\psi^{-1}}(l_3(u)vg) 
\psi_{[(2n)1^{4mn}], 1}^{-1}(u)du \theta^{2mn-n, \phi_1}_{\psi^{1}}(l_1(v)g) \psi_{[(2n)1^{4mn-2n}], -1}^{-1}(v)dv\\
= & \int_{[V_{[(2n)1^{4mn-2n}],1}]} \int_{[V_{[(2n)1^{4mn}],1}']} 
\wt{\varphi}'(uvg) \theta^{2mn-n, \phi_{22}}_{\psi^{-1}}(l_3(u)vg) 
\psi_{[(2n)1^{4mn}], 1}^{-1}(u)du \\
& \theta^{2mn-n, \phi_1}_{\psi^{1}}(l_1(v)g) \psi_{[(2n)1^{4mn-2n}], -1}^{-1}(v)dv,
\end{split}
\end{align}
where $\wt{\varphi}'(uvg) = \int_{\BA^n}
\wt{\varphi}(uvg\hat{q}_1) \phi_{21}(q_1) dq_1$. Note that we still
have that
$\wt{\varphi}' \in \wt{\CE}_{\tau, \wt{\sigma}_{4(m-1)n+2n}}$.

Let $\omega_1$ be the Weyl element of $Sp_{4mn+2n}$ in \eqref{thm7equ2}.
Conjugating cross by $\omega_1$, the integral in \eqref{thm7equ27} becomes
\begin{align}\label{thm7equ28}
\begin{split}
& \int_{[W_1']} 
\wt{\varphi}'(w \omega_1 g) \theta^{2mn-n, \phi_{22}}_{\psi^{-1}}(l_4(w)g) 
\theta^{2mn-n, \phi_1}_{\psi^{1}}(l_5(w)g) \psi_{W_1'}^{-1}(w)dw,
\end{split}
\end{align}
where $W_1'=\omega_1 V_{[(2n)1^{4mn-2n}],1} V_{[(2n)1^{4mn}],1}' \omega_1^{-1}$,
its elements have the form as in \eqref{thm7equ29}, except that 
there is no requirement that $q_1(i,j)=0$, for $i=2n-1, 2n$, $1 \leq j \leq 2mn-n$.
For $w=\begin{pmatrix}
Z & q_1 & q_2\\
& I_{4mn-2n} & q_1^*\\
&&Z^*
\end{pmatrix}$ as in \eqref{thm7equ29}, $l_4(w) = (q_1(2n-1), q_2(2n-1,2))$, 
$l_5(w)=(q_1(2n), q_2(2n,1))$, $q_1(2n-1)$, $q_1(2n)$ are the $(2n-1)$-th,
$(2n)$-th rows of $q_1$, respectively.
And $\psi_{W_1'}(w)= \psi(\sum_{i=1}^{n-1} \tr([Z]_{i,i+1})$,
with notation as in \eqref{thm7equ4}.

Next, we repeat the steps from \eqref{thm7equ5} to \eqref{thm7equ11},
and use Lemma 2.5 of \cite{JL13b} whenever Lemma 2.3 of \cite{JL13b} is used.
We will get that the integral in \eqref{thm7equ28}
becomes 
\begin{align}\label{thm7equ30}
\begin{split}
& \int_{[W_1'']} 
\wt{\varphi}''(w \omega_1 g) \theta^{2mn-n, \phi_{22}}_{\psi^{-1}}(l_4(w)g) 
\theta^{2mn-n, \phi_1}_{\psi^{1}}(l_5(w)g) \psi_{W_1''}^{-1}(w)dw,
\end{split}
\end{align}
where $W_1''$ is unipotent radical of the parabolic subgroup 
$P_{2^n}^{4mn+2n}$ of $Sp_{4mn+2n}$ with Levi subgroup
isomorphic to $GL_2^n \times Sp_{4mn-2n}$. 
$\wt{\varphi}'' \in \wt{\CE}_{\tau, \wt{\sigma}_{4(m-1)n+2n}}$.
And for $w \in W_1''$, 
$\psi_{W_1''}(w)= \psi(\sum_{i=1}^{2n-2} w_{i,i+2})$.

Next, we want to unfold the two theta series as before.
To do this, we need to use certain property
of theta series as in (5.9) \cite{GJS12}:
\begin{align}\label{thm7equ31}
\begin{split}
& \theta^{2mn-n, \phi_{22}}_{\psi^{-1}}((x_1, y_1; z_1)g) 
\theta^{2mn-n, \phi_1}_{\psi^{1}}((x_2, y_2; z_1)g)\\
= & \theta^{4mn-2n, \phi_{22} \otimes \phi_1}_{\psi}
((x_1, x_2, y_2, -y_1; z_2-z_1) \wt{g}),
\end{split}
\end{align}
where for $w \in W_1'$, we write $l_4(w) = (x_1, y_1; z_1)$,
$l_5(w) = (x_2, y_2; z_2)$, $x_1, y_1, x_2, y_2 \in \BA^{2mn-n}$,
and $$\wt{g} = \begin{pmatrix}
A & & & -B\\
  &A&B&\\
  &C&D&\\
-C& & &D
\end{pmatrix},$$
if we write $g=\left( \begin{pmatrix}
A&B\\
C&D
\end{pmatrix}, \varepsilon \right)$. Let $\phi_3 = \phi_{22} \otimes \phi_1$.

Let 
$$
\gamma = 
\begin{pmatrix}
I_{2mn-n} & 0 & \frac{1}{2} I_{2mn-n} & 0\\
I_{2mn-n} & 0 & -\frac{1}{2} I_{2mn-n} & 0\\
0 & I_{2mn-n} & 0 & \frac{1}{2} I_{2mn-n}\\
0 & - I_{2mn-n} & 0 &\frac{1}{2} I_{2mn-n}
\end{pmatrix} \in Sp_{8mn-4n}.
$$
Then,
\begin{align}\label{thm7equ32}
\begin{split}
(x_1, x_2, y_2, -y_1) \gamma & = (x_1+x_2, y_1+y_2, \frac{1}{2}(x_1-x_2), 
-\frac{1}{2}(y_1-y_2)),\\
\hat{g} & := \gamma^{-1} \wt{g} \gamma = \left( \begin{pmatrix}
g &\\
& g^*
\end{pmatrix}, 1 \right).
\end{split}
\end{align}

Therefore, by \eqref{thm7equ32}, the right hand side of 
\eqref{thm7equ31} becomes:
\begin{align}\label{thm7equ33}
\begin{split}
& \theta^{4mn-2n, \phi_3}_{\psi}
((x_1, x_2, y_2, -y_1; z_2-z_1) \wt{g})\\
= & \theta^{4mn-2n, \phi_3'}_{\psi}
((x_1+x_2, y_1+y_2, \frac{1}{2}(x_1-x_2), 
-\frac{1}{2}(y_1-y_2); z_2-z_1) \hat{g}),
\end{split}
\end{align}
where $\phi_3' = \omega^{4mn-2n}_{\psi} (\gamma^{-1}) \phi_3$.

Let 
$A = 
\begin{pmatrix}
1 & -1\\
1 & 1
\end{pmatrix}$, and 
$\epsilon=\diag(A, \ldots, A; I_{(4m-4)n}; A^*, \ldots, A^*)$,
as in (2.31) of \cite{GJS12}.
Conjugating cross the integral in \eqref{thm7equ30} by 
$\epsilon$, it becomes:
\begin{align}\label{thm7equ34}
\begin{split}
& \int_{[W_2']} 
\wt{\varphi}''(w \epsilon \omega_1 g) \theta^{4mn-2n, \phi_3'}_{\psi}
(l_6(w)\hat{g}) \psi_{W_2'}^{-1}(w)dw\\
= & \int_{[W_2']} 
\wt{\varphi}''(w g \epsilon \omega_1) \theta^{4mn-2n, \phi_3'}_{\psi}
(l_6(w)\hat{g}) \psi_{W_2'}^{-1}(w)dw\\
= & \int_{[W_2']} 
\wt{\varphi}'''(w g) \theta^{4mn-2n, \phi_3'}_{\psi}
(l_6(w)\hat{g}) \psi_{W_2'}^{-1}(w)dw,
\end{split}
\end{align}
where $W_2' = \epsilon W_1'' \epsilon^{-1}$,
and elements in $W_2'$ are as in \eqref{thm7equ35},
except that there is no requirement that 
$q_1(i,j)=0$, for $i=2n-1, 2n$, $1 \leq j \leq 2mn-n$,
still $\wt{\varphi}''' \in \wt{\CE}_{\tau, \wt{\sigma}_{4(m-1)n+2n}}$.
Note that $g$ commutes with $\epsilon \omega_1$.
And $\psi_{W_2'}(w) = \psi(\sum_{i=1}^{2n-2} w_{i,i+2}+w_{2n-1,4mn+1})$.
For $w=\begin{pmatrix}
Z & q_1 & q_2\\
& I_{4mn-2n} & q_1^*\\
&&Z^*
\end{pmatrix}$ as in \eqref{thm7equ35}, for $i=2n-1, 2n$,
write the $i$-th row of $q_1$ as $q_1(i)=(x_i, y_i)$,
then 
$$l_6(w) = (x_{2n}, y_{2n}, \frac{1}{2}x_{2n-1}, -\frac{1}{2}y_{2n-1}, -q_2(2n-1,1)).$$

Then, we unfold the theta series 
$\theta^{4mn-2n, \phi_3'}_{\psi}
(l_6(w)\hat{g})$ as in \eqref{thm7equ24},
the integral in \eqref{thm7equ34} becomes:
\begin{align}\label{thm7equ36}
\begin{split}
& \int_{[W_3']} \int_{\BA^{4mn-2n}}
\wt{\varphi}'''(w g \widehat{\xi}) \phi_3(\xi) d\xi\psi_{W_2'}^{-1}(w)dw\\
= & \int_{[W_3']} 
\wt{\varphi}^{(4)}(w g)\psi_{W_3'}^{-1}(w)dw
\end{split}
\end{align}
where $\widehat{\xi} = \prod_{i=1}^{2mn-n} X_{\alpha_i} (\xi(i))
\prod_{j=1}^{2mn-n} X_{\beta_j} (\xi(2mn-n+j))$,
with $\alpha_i = e_{2n}-e_{2n+i}$, $\beta_j = e_{2n} + e_{2mn+n-j+1}$,
and $\wt{\varphi}^{(4)}(w g)=\int_{\BA^{4mn-2n}}
\wt{\varphi}'''(w g \widehat{\xi}) \phi_3(\xi)$,
still $\wt{\varphi}^{(4)} \in \wt{\CE}_{\tau, \wt{\sigma}_{4(m-1)n+2n}}$.
And $W_3'$ is the subgroup of $W_2'$ consisting of elements
$w=\begin{pmatrix}
Z & q_1 & q_2\\
& I_{4mn-2n} & q_1^*\\
&&Z^*
\end{pmatrix}$, with $q_1(2n)=0$.

Conjugate cross the integral in \eqref{thm7equ36} by 
the Weyl elements $\omega_2$ in \eqref{thm7equ37}, it becomes:
\begin{align}\label{thm7equ38}
\begin{split}
& \int_{[W_4]} 
\wt{\varphi}^{(4)}(w \omega_2 g)\psi_{W_4}^{-1}(w)dw\\
= & \int_{[W_4]} 
\wt{\varphi}^{(4)}(w g \omega_2)\psi_{W_4}^{-1}(w)dw\\
= & \int_{[W_4]} 
\wt{\varphi}^{(5)}(w g)\psi_{W_4}^{-1}(w)dw,
\end{split}
\end{align}
where $W_4$, $\psi_{W_4}$ are exactly as in \eqref{thm7equ16}.

Now, we repeat the steps from \eqref{thm7equ16} to \eqref{thm7equ18},
and use Lemma 2.5 of \cite{JL13b} whenever Lemma 2.3 of \cite{JL13b} is used.
Then, we get that the integral in \eqref{thm7equ38} becomes:
\begin{align}\label{thm7equ39}
\begin{split}
\int_{[W_5]} 
\wt{\varphi}^{(5)}(w g)\psi_{W_5}^{-1}(w)dw,
\end{split}
\end{align}
where $W_5=\prod_{i=1}^{2n-1} R^2_i \prod_{t=n+1}^{2n-1} R^3_t X_{e_t+e_t} \wt{W}_4$ 
as in \eqref{thm7equ18}.
And given $w = \begin{pmatrix}
Z & q_1 & q_2\\
0 & I_{4mn-2n} & q_1^*\\
0 & 0 & Z^*
\end{pmatrix}$ as in \eqref{thm7equ19},
$\psi_{W_5}(w) = \psi(\sum_{i=1}^{n-1}Z_{i,i+1} - \sum_{j=n}^{2n-1} Z_{i,i+1})$.

Let $t = \diag(I_{n-1}, -I_{n+1}; I_{4mn-2n}; -I_{n+1}, I_{n-1}) \in Sp_{4mn+2n}(F)$.
Since $\wt{\varphi}^{(5)}$ is automorphic, the integral in 
\eqref{thm7equ39} becomes:
\begin{align}\label{thm7equ40}
\begin{split}
& \int_{[W_5]} 
\wt{\varphi}^{(5)}(tw g)\psi_{W_5}^{-1}(w)dw\\
= & \int_{[W_5]} 
\wt{\varphi}^{(5)}(w g)\psi_{W_5}^{',-1}(w)dw,
\end{split}
\end{align}
after changing variable $w \mapsto t^{-1} w$,
and $\psi_{W_5}^{'}(w)=\psi(\sum_{i=1}^{2n-1}Z_{i,i+1})$,
for $w = \begin{pmatrix}
Z & q_1 & q_2\\
0 & I_{4mn-2n} & q_1^*\\
0 & 0 & Z^*
\end{pmatrix} \in W_5$.

Note that the integral on the right hand side of the identity in \eqref{thm7equ40}
is exactly $\wt{\varphi}^{\psi_{N_{1^{2n-1}}}}$, using notation in Lemma \ref{lem2}.
On the other hand, we know that by Lemma \ref{lem2},
$\wt{\varphi}^{\psi_{N_{1^{2n-1}}}} = \wt{\varphi}^{\wt{\psi}_{N_{1^{2n}}}}$.
Note that Lemma \ref{lem2} also applies to metaplectic groups.

Therefore, the integral in \eqref{thm7equ40} becomes
\begin{equation}\label{thm7equ41}
\int_{[\wt{U}]} \wt{\varphi}^{(5)}(u g) \psi_{\wt{U}}(u)^{-1} du,
\end{equation} 
where $\wt{U}$ and $\psi_{\wt{U}}$ are exactly as in \eqref{thm7equ21}.

Now, it follows easily from the end of the proof of Part (1) that
as a function of $g \in \wt{Sp}_{4mn-2n}$, 
the integral in \eqref{thm7equ41} gives a section in 
$\wt{\sigma}_{4(m-1)n+2n}$.
Since starting from the integral in 
\eqref{thm7equ23}, we always get equalities,
$FJ^{\phi_1}_{\psi^{-1}_{n-1}} \circ FJ^{\phi_2}_{\psi^{1}_{n-1}}(\wt{\xi})
\in \wt{\sigma}_{4(m-1)n+2n}$.
Therefore, 
$$\CD_{2n, \psi^{-1}}^{4mn} \circ \CD_{2n, \psi^{1}}^{4mn+2n}
(\wt{\CE}_{\tau, \wt{\sigma}_{4(m-1)n+2n}}) \subset \wt{\sigma}_{4(m-1)n+2n}.$$

On the other hand, 
by Part (1), 
$$\CD_{2n, \psi^{-1}}^{4mn} \circ \CD_{2n, \psi^{1}}^{4mn+2n}
(\wt{\CE}_{\tau, \wt{\sigma}_{4(m-1)n+2n}}) \neq 0.$$
Since $\wt{\sigma}_{4(m-1)n+2n}$ is irreducible,
we have that 
$$\CD_{2n, \psi^{-1}}^{4mn} \circ \CD_{2n, \psi^{1}}^{4mn+2n}
(\wt{\CE}_{\tau, \wt{\sigma}_{4(m-1)n+2n}}) = \wt{\sigma}_{4(m-1)n+2n}.$$

This finishes the proof of Part (2),
and completes the proof of the theorem.
\end{proof}

\begin{rmk}\label{rmk2}
Note that in the proof of Part (2), we could easily get a similar identity
as in Theorem 5.1 \cite{GJS12}, but for simplicity, we did not 
write it down explicitly.
\end{rmk}

Theorem \ref{thm7} easily implies the following result.

\begin{thm}\label{thm10}
$\ul{p}=[(2n)^{2m}(2n_1)^{s_1}(2n_2)^{s_2} \cdots (2n_k)^{s_k}]$
is a maximal partition providing non-vanishing Fourier coefficients
for $\wt{\CE}_{\tau, \wt{\sigma}_{4(m-1)n+2n}}$.
\end{thm}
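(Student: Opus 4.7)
My plan for Theorem \ref{thm10} is to separate the two conditions making up ``maximal'': first, producing a nonzero Fourier coefficient of $\wt{\CE}_{\tau,\wt{\sigma}_{4(m-1)n+2n}}$ attached to $\ul{p}$, and second, ruling out any strictly larger symplectic partition.

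Non-vanishing should be essentially immediate from Theorem \ref{thm7}. By part (2) of that theorem, the twofold descent $\CD^{4mn}_{2n,\psi^{-1}}\circ\CD^{4mn+2n}_{2n,\psi^{1}}$ applied to $\wt{\CE}_{\tau,\wt{\sigma}_{4(m-1)n+2n}}$ equals the cuspidal representation $\wt{\sigma}_{4(m-1)n+2n}$; since $\wt{\sigma}_{4(m-1)n+2n}\in\CN'_{\wt{Sp}_{4(m-1)n+2n}}(\tau,\psi)$ carries, by definition, a nonzero Fourier coefficient attached to $\ul{p}(\wt{\sigma}_{4(m-1)n+2n})=[(2n)^{2(m-1)}(2n_1)^{s_1}\cdots(2n_k)^{s_k}]$, composing that Fourier integral with the two Fourier-Jacobi-type integrals inside the descents produces a non-identically-zero compound integral on $\wt{\CE}_{\tau,\wt{\sigma}_{4(m-1)n+2n}}$. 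An application of Lemma 2.6 of \cite{GRS03} (equivalently Lemma 3.1 of \cite{JL13b}) then repackages this compound integral as a nonzero Fourier coefficient attached to the composite partition $[(2n)1^{4mn}]\circ[(2n)1^{4mn-2n}]\circ\ul{p}(\wt{\sigma}_{4(m-1)n+2n})$, which rearranges to the symplectic partition $\ul{p}=[(2n)^{2m}(2n_1)^{s_1}\cdots(2n_k)^{s_k}]$.

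For the maximality half, I would split into cases on the shape of a hypothetical $\ul{q}>\ul{p}$. The principal case is that some part of $\ul{q}$ exceeds $2n$; this is exactly the mechanism already invoked in the proof of Theorem \ref{thm7}(1), where $\varphi_{[(2n+2)1^{4mn-2}],\{a\}}$ was asserted to vanish. One runs a familiar root-exchange/Fourier expansion of the type used in Sections 2--3, applies an analog of Lemma \ref{constantterm} for $\wt{\CE}_{\tau,\wt{\sigma}_{4(m-1)n+2n}}$ along the relevant parabolic, and reduces to a Whittaker-type integral against a Speh representation $\Delta(\tau,j)$ with $j\ge 2$, which vanishes by Proposition 2.1 of \cite{JL13a}. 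The remaining case, where the largest part of $\ul{q}$ equals $2n$ but the tail dominates the tail of $\ul{p}$, I would treat by running the composite-partition argument of the non-vanishing half in reverse: a nonzero Fourier coefficient attached to such a $\ul{q}$ on $\wt{\CE}_{\tau,\wt{\sigma}_{4(m-1)n+2n}}$ would pass through the two descents to manufacture a Fourier coefficient of $\wt{\sigma}_{4(m-1)n+2n}$ attached to a partition strictly larger than $\ul{p}(\wt{\sigma}_{4(m-1)n+2n})$, contradicting the maximality built into the definition of $\CN'_{\wt{Sp}_{4(m-1)n+2n}}(\tau,\psi)$.

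The main obstacle I anticipate is the first sub-case of maximality, namely the vanishing for partitions whose largest part strictly exceeds $2n$. This requires a careful enumeration of all possible ``enlargements'' of $\ul{p}$ and a uniform root-exchange argument using Lemma 2.3 of \cite{JL13b} that reduces each such enlargement to the non-genericity of a Speh representation $\Delta(\tau,j)$ with $j\ge 2$. Everything else should be a repackaging of Theorem \ref{thm7} together with standard composite-partition manipulations.
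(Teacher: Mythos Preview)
Your non-vanishing argument matches the paper exactly: Theorem \ref{thm7}(2) plus Lemma 2.6 of \cite{GRS03}/Lemma 3.1 of \cite{JL13b} on the composite partition.

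For maximality, however, the paper takes a genuinely different route, and your proposed mechanism for the ``part exceeds $2n$'' case is incorrect as stated. The paper does \emph{not} run a global root-exchange argument reducing to non-genericity of a Speh representation. Instead, it works \emph{locally} at an unramified place: it identifies the unramified component of $\wt{\CE}_{\tau,\wt{\sigma}_{4(m-1)n+2n}}$ as a constituent of the explicit induced representation
\[
\Ind^{\wt{Sp}_{4mn+2n}(F_v)}_{\wt{P}_{(2m+1)^n}(F_v)} \mu_{\psi}\otimes_{i=1}^n \lvert\cdot\rvert^{\alpha_i}\chi_i(\det_{GL_{2m+1}})\otimes 1_{\wt{Sp}_0},
\]
and then invokes Lemma 3.2 of \cite{JL13c} to kill all Fourier--Jacobi coefficients $FJ_{\psi_{l-1}}$ with $l>n$ (and the odd-type partitions $[(2l+1)^21^{\cdots}]$). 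The second descent step is handled the same way after applying Proposition \ref{prop1} to compute the unramified component of $\CD_{2n,\psi^1}^{4mn+2n}(\wt{\CE}_{\tau,\wt{\sigma}})$.

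Your reduction to ``Whittaker integral against $\Delta(\tau,j)$ with $j\ge 2$'' does not apply here: the residual representation $\wt{\CE}_{\tau,\wt{\sigma}_{4(m-1)n+2n}}$ is induced from $\tau\otimes\wt{\sigma}_{4(m-1)n+2n}$, not from a Speh representation, so its nonzero constant terms along $P_r$ live only at $r=2n$ and produce $\tau\otimes\wt{\sigma}$, never $\Delta(\tau,j)$ with $j\ge 2$. A global argument along your lines could perhaps be salvaged (the constant terms along $P_r$ for $r>2n$ vanish directly by cuspidal support, and the generic case vanishes since residues are non-generic), but note a logical constraint: the paper \emph{forward-references} item (1) of this proof inside the proof of Theorem \ref{thm7}(1) (the vanishing of $\varphi_{[(2n+2)1^{4mn-2}],\{a\}}$). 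Hence whatever argument you give for the vanishing when the largest part exceeds $2n$ must not itself rely on Theorem \ref{thm7}; the local computation achieves this cleanly, while a global root-exchange argument in the style of Section 2 risks circularity.

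Your second case (largest part equal to $2n$, tail strictly larger) is in line with what the paper uses implicitly: once the vanishing for parts $>2n$ is in hand at both descent steps, the double-descent identity forces any larger tail on $\wt{\CE}$ to descend to a larger tail on $\wt{\sigma}_{4(m-1)n+2n}$, contradicting $\ul{p}(\wt{\sigma}_{4(m-1)n+2n})$.
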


\begin{proof}
By Theorem \ref{thm7}, 
$$
[(2n)1^{4mn}] \circ [(2n)1^{4mn-2n}] \circ [(2n)^{2m-2}(2n_1)^{s_1}(2n_2)^{s_2} \cdots (2n_k)^{s_k}]
$$
is a composite partition providing non-vanishing Fourier coefficients
for $\wt{\CE}_{\tau, \wt{\sigma}_{4(m-1)n+2n}}$.
By Lemma 2.6 of \cite{GRS03} or Lemma 3.1 of \cite{JL13b},  
$[(2n)^{2m}(2n_1)^{s_1}(2n_2)^{s_2} \cdots (2n_k)^{s_k}]$
is a partition providing non-vanishing Fourier coefficients
for $\wt{\CE}_{\tau, \wt{\sigma}_{4(m-1)n+2n}}$.

Since by Theorem \ref{thm7}, $\CD_{2n, \psi^{-1}}^{4mn} \circ \CD_{2n, \psi^{1}}^{4mn+2n}
(\wt{\CE}_{\tau, \wt{\sigma}_{4(m-1)n+2n}})=\wt{\sigma}_{4(m-1)n+2n},$
and $\ul{p}(\wt{\sigma}_{4(m-1)n+2n})=[(2n)^{2(m-1)}(2n_1)^{s_1}(2n_2)^{s_2} \cdots (2n_k)^{s_k}],$
to show the maximality of $\ul{p}$, we just have to show that 

(1) at the step of taking $\CD_{2n, \psi^{1}}^{4mn+2n}$, 
$\wt{\CE}_{\tau, \wt{\sigma}_{4(m-1)n+2n}}$ has no nonzero Fourier coefficients 
attached to the symplectic partitions 
$[(2l)1^{4mn+2n-2l}]$ for any $l \geq n+1$, or $[(2l+1)^21^{4mn+2n-4l-2}]$, for any $l \geq n$;

(2) at the step of taking $\CD_{2n, \psi^{-1}}^{4mn}$, 
$\CD_{2n, \psi^{1}}^{4mn+2n}(\wt{\CE}_{\tau, \wt{\sigma}_{4(m-1)n+2n}})$ has no nonzero Fourier coefficients 
attached to the symplectic partitions 
$[(2l)1^{4mn-2l}]$ for any $l \geq n+1$, or $[(2l+1)^21^{4mn-4l-2}]$, for any $l \geq n$.

We will show (1) and (2) using calculations of unramified local components.
Let $v$ be a finite place such that $\wt{\CE}_{\tau, \wt{\sigma}_{4(m-1)n+2n},v}$
is unramified. This means that both $\tau_v$ and $\wt{\sigma}_{4(m-1)n+2n, v}$
are also unramified. Assume that 
$\tau_v = \times_{i=1}^n \nu^{\alpha_i} \chi_i \times \times_{i=1}^n \nu^{-\alpha_i} \chi_i^{-1}$, where $\nu^{\alpha_i}(\cdot) = \lvert det(\cdot) \rvert^{\alpha_i}$,
$0 \leq \alpha_i < \frac{1}{2}$, and $\chi_i$'s are unitary 
unramified characters of $F^*_v$, for $1 \leq i \leq n$.
Since $\wt{\pi}$ lifts weakly to $\tau$ with respect to $\psi$,
$\wt{\pi}_v = \mu_{\psi} \times_{i=1}^n \nu^{\alpha_i} \chi_i \rtimes 1_{\wt{Sp}_0}$.

Since by the definition of the set 
$\CN'_{\wt{Sp}_{4(m-1)n+2n}}(\tau, \psi)$, 
$\wt{\sigma}_{4(m-1)n+2n}$ is an irreducible cuspidal
automorphic representation of $\wt{Sp}_{4(m-1)n+2n}(\BA)$, which is nearly equivalent to
the residual representation $\wt{\CE}_{\Delta(\tau, m-1) \otimes \wt{\pi}}$,
similarly as Lemma 3.1 of \cite{GRS05},
it is easy to see that $\wt{\CE}_{\tau, \wt{\sigma}_{4(m-1)n+2n},v}$
is the unique unramified component of the following induced representation
\begin{equation}\label{thm10equ1}
\Ind^{\wt{Sp}_{4mn+2n}(F^*_v)}_{\wt{P}_{(2m+1)^n}(F^*_v)} \mu_{\psi}
\otimes_{i=1}^n \lvert \cdot \rvert^{\alpha_i} \chi_i(det_{GL_{2m+1}}) \otimes 1_{\wt{Sp}_0},
\end{equation}
where ${P}_{(2m+1)^n}$ is the parabolic subgroup of ${Sp}_{4mn+2n}$
with Levi isomorphic to $GL_{2m+1}^n \times 1_{Sp_0}$, and $\wt{P}_{(2m+1)^n}$
is its full pre-image in $\wt{Sp}_{4mn+2n}$.

By Lemma 3.2 of \cite{JL13c}, we can easily see that (1) holds.
By \eqref{prop1equ2} of Proposition \ref{prop1}, 
\begin{align}\label{thm10equ2}
\begin{split}
 & FJ_{\psi^1_{n-1}} (\Ind^{\wt{Sp}_{4mn+2n}(F^*_v)}_{\wt{P}_{(2m+1)^n}(F^*_v)} \mu_{\psi}
\otimes_{i=1}^n \lvert \cdot \rvert^{\alpha_i} \chi_i(det_{GL_{2m+1}}) \otimes 1_{\wt{Sp}_0})\\
\cong & \Ind^{{Sp}_{4mn}(F^*_v)}_{{P}_{(2m)^n}(F^*_v)}
\otimes_{i=1}^n \lvert \cdot \rvert^{\alpha_i} \chi_i(det_{GL_{2m}}) \otimes 1_{{Sp}_0},
\end{split}
\end{align}
which is actually irreducible, by results in \cite{Jan96},
and is an unramified local component of $\CD_{2n, \psi^{1}}^{4mn+2n}(\wt{\CE}_{\tau, \wt{\sigma}_{4(m-1)n+2n}})$.
Again, by Lemma 3.1 of \cite{JL13c}, we can easily see that (2) also holds.
Therefore, we have shown that $\ul{p}=[(2n)^{2m}(2n_1)^{s_1}(2n_2)^{s_2} \cdots (2n_k)^{s_k}]$
is a maximal partition providing non-vanishing Fourier coefficients
for $\wt{\CE}_{\tau, \wt{\sigma}_{4(m-1)n+2n}}$,
which completes the proof of this theorem.
\end{proof}

To continue, we prove that $\CD_{2n, \psi^{1}}^{4mn+2n}
(\wt{\CE}_{\tau, \wt{\sigma}_{4(m-1)n+2n}})$ is a cuspidal
representation, every component of which is 
inside
$\CN_{{Sp}_{4mn}}(\tau, \psi)$.

\begin{thm}\label{thm5}
$\CD_{2n, \psi^{1}}^{4mn+2n}
(\wt{\CE}_{\tau, \wt{\sigma}_{4(m-1)n+2n}}) \subset 
\CN_{{Sp}_{4mn}}(\tau, \psi)$.
\end{thm}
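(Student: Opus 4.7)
I must verify that every irreducible constituent $\sigma$ of $\CD_{2n,\psi^1}^{4mn+2n}(\wt{\CE}_{\tau,\wt{\sigma}_{4(m-1)n+2n}})$ satisfies the four defining properties of membership in $\CN_{Sp_{4mn}}(\tau,\psi)$: (i) cuspidality, (ii) near equivalence with $\CE_{\Delta(\tau,m)}$, (iii) nonvanishing of the Fourier--Jacobi coefficient $FJ_{\psi^{-1}_{n-1}}$, and (iv) maximal partition $\ul{p}(\sigma) = [(2n)^{2m-1}(2n_1)^{s_1}\cdots(2n_k)^{s_k}]$. The essential inputs are Theorem \ref{thm10} (maximal partition of $\wt{\CE}_{\tau,\wt{\sigma}_{4(m-1)n+2n}}$), Theorem \ref{thm7} (the composition identity for two successive descents), the tower property of automorphic descent from \cite{GRS11}, and the unramified local computation already carried out in the proof of Theorem \ref{thm10}.

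\textbf{Cuspidality and near equivalence.} In the proof of Theorem \ref{thm10} the vanishing of Fourier coefficients on $\wt{\CE}_{\tau,\wt{\sigma}_{4(m-1)n+2n}}$ attached to $[(2l)1^{4mn+2n-2l}]$ with $l \ge n+1$, and to $[(2l+1)^{2}1^{4mn+2n-4l-2}]$ with $l \ge n$, was established. By the tower property (Theorem 7.10 of \cite{GRS11}), this vanishing at all levels above $2n$ in the $\psi^{1}$-descent tower forces $\CD_{2n,\psi^1}^{4mn+2n}(\wt{\CE}_{\tau,\wt{\sigma}_{4(m-1)n+2n}})$ to be cuspidal; its nonvanishing is already guaranteed by Theorem \ref{thm7}(1). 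For near equivalence, at every finite place $v$ where everything is unramified, \eqref{thm10equ1} identifies the local component of $\wt{\CE}_{\tau,\wt{\sigma}_{4(m-1)n+2n}}$, and \eqref{thm10equ2} computes its $FJ_{\psi^{1}_{n-1}}$-descent to be the irreducible unramified induced representation attached to $P_{(2m)^n}$ in $Sp_{4mn}(F_v)$; this coincides with the unramified local component of $\CE_{\Delta(\tau,m)}$, so every irreducible constituent $\sigma$ of the descent is nearly equivalent to $\CE_{\Delta(\tau,m)}$.

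\textbf{Partition and Fourier--Jacobi non-vanishing.} The nonvanishing of $FJ_{\psi^{-1}_{n-1}}$ on the descent is immediate from Theorem \ref{thm7}(1), since the outer descent $\CD_{2n,\psi^{-1}}^{4mn}$ is constructed from $FJ_{\psi^{-1}_{n-1}}$ and is shown there to be nonzero. For the partition, the upper bound $\ul{p}(\sigma) \le [(2n)^{2m-1}(2n_1)^{s_1}\cdots(2n_k)^{s_k}]$ follows from Theorem \ref{thm10} combined with the partition-removal effect of Fourier--Jacobi descent (Lemma 2.6 of \cite{GRS03} and Lemma 3.1 of \cite{JL13b}), which strips off one block of size $2n$. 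For the lower bound, Theorem \ref{thm7}(2) gives $\CD_{2n,\psi^{-1}}^{4mn}(\CD_{2n,\psi^1}^{4mn+2n}(\wt{\CE}_{\tau,\wt{\sigma}_{4(m-1)n+2n}})) = \wt{\sigma}_{4(m-1)n+2n}$, so at least one irreducible constituent $\sigma$ has $\CD_{2n,\psi^{-1}}^{4mn}(\sigma) = \wt{\sigma}_{4(m-1)n+2n}$ (the latter being irreducible). For that $\sigma$, the composite-partition argument applied to $\ul{p}(\wt{\sigma}_{4(m-1)n+2n}) = [(2n)^{2(m-1)}(2n_1)^{s_1}\cdots(2n_k)^{s_k}]$ produces nonzero Fourier coefficients of $\sigma$ attached to $[(2n)1^{4mn-2n}]\circ[(2n)^{2(m-1)}(2n_1)^{s_1}\cdots(2n_k)^{s_k}]$, forcing $\ul{p}(\sigma) \ge [(2n)^{2m-1}(2n_1)^{s_1}\cdots(2n_k)^{s_k}]$.

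\textbf{Main obstacle.} The subtle point I anticipate is the passage from statements about the full descent representation to uniform statements about each irreducible constituent, since a priori the descent may decompose and properties (iii) and (iv) could concentrate on a single summand. I plan to resolve this by exploiting that all constituents of the descent share the same unramified local components, hence the same near-equivalence class; this pins down the unramified partition data uniformly, and combined with the fact that $\CD_{2n,\psi^{-1}}^{4mn}(\sigma)$ is either zero or irreducibly equal to $\wt{\sigma}_{4(m-1)n+2n}$, the composite-partition argument and the cuspidal support jointly force each summand to realize both $FJ_{\psi^{-1}_{n-1}} \ne 0$ and the correct maximal partition, thereby placing every constituent in $\CN_{Sp_{4mn}}(\tau,\psi)$.
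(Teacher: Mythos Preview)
Your overall strategy matches the paper's, and the arguments for near equivalence and the partition upper bound are fine. There are two issues, one minor and one genuine.

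\textbf{Cuspidality.} You invoke the tower property (Theorem~7.10 of \cite{GRS11}) directly. The paper instead computes the constant terms $\CC_{N^{4mn}_r}(FJ^{\phi}_{\psi^1_{n-1}}(\wt{\xi}))$ explicitly via Theorem~7.8 of \cite{GRS11}, using the cuspidal support of $\wt{\CE}_{\tau,\wt{\sigma}_{4(m-1)n+2n}}$ to reduce to the cases $r=k$ and $r-k=2n$, and then killing both using Theorem~\ref{thm10} and the hypotheses on $\wt{\sigma}_{4(m-1)n+2n}$. Since the input here is residual rather than cuspidal, the constant terms $\CC_{N^{4mn+2n}_{r-k}}(\wt{\xi})$ are not automatically zero, so a direct appeal to the tower property requires at least this much analysis anyway; the paper simply does it by hand.

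\textbf{Nonvanishing of $FJ_{\psi^{-1}_{n-1}}$ on every irreducible constituent.} This is where your plan has a genuine gap. Theorem~\ref{thm7}(1) only gives nonvanishing of $FJ_{\psi^{-1}_{n-1}}$ on the full descent, not on each summand. Your proposed resolution does not close this: near equivalence constrains unramified local components but says nothing about which summand carries a given global Fourier coefficient, and your dichotomy ``$\CD_{2n,\psi^{-1}}^{4mn}(\sigma)$ is either zero or equals $\wt{\sigma}_{4(m-1)n+2n}$'' is precisely Theorem~\ref{thm6}, whose hypothesis is $\sigma\in\CN_{Sp_{4mn}}(\tau,\psi)$ --- exactly what you are trying to prove --- so invoking it here is circular. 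The paper handles this with a different mechanism: for each irreducible component $\pi$ of the descent one has a nonzero pairing
\[
\langle \varphi_\pi, FJ^{\phi}_{\psi^1_{n-1}}(\wt{\xi})\rangle
=\int_{[Sp_{4mn}]}\varphi_\pi(h)\,\ol{FJ^{\phi}_{\psi^1_{n-1}}(\wt{\xi})(h)}\,dh,
\]
and replacing $\wt{\xi}$ by the Eisenstein series $\wt E(\phi_s,\cdot)$ and unfolding via Theorem~3.3 of \cite{GJRS11} shows that this pairing factors through $FJ_{\psi^{-1}_{n-1}}(\varphi_\pi)$. This integral-representation argument is what produces nonvanishing of $FJ_{\psi^{-1}_{n-1}}$ for \emph{each} constituent, and there is no way to avoid it along the lines you sketch. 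The same issue affects your lower bound on $\ul p(\sigma)$ for every constituent; once (2-3) is known for each $\pi$, the composite-partition argument gives the lower bound for that $\pi$ as well.
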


\begin{proof}
We will prove that 

(1) $\CD_{2n, \psi^{1}}^{4mn+2n}
(\wt{\CE}_{\tau, \wt{\sigma}_{4(m-1)n+2n}})$ is a cuspidal
representation;

(2) every irreducible component of $\CD_{2n, \psi^{1}}^{4mn+2n}
(\wt{\CE}_{\tau, \wt{\sigma}_{4(m-1)n+2n}})$ is 
inside
$\CN_{{Sp}_{4mn}}(\tau, \psi)$.

To prove (1), we will show that the constant terms of 
elements in $\CD_{2n, \psi^{1}}^{4mn+2n}
(\wt{\CE}_{\tau, \wt{\sigma}_{4(m-1)n+2n}})$ along all maximal parabolic subgroups
of ${Sp}_{4mn}$ are all zero.

Recall that $P^{4mn}_r = M^{4mn}_r N^{4mn}_r$ (with $1 \leq r \leq 2mn$) is the standard parabolic subgroup
of ${Sp}_{4mn}$ with Levi part $M^{4mn}_r$ isomorphic to $GL_r \times Sp_{4mn-2r}$,
$N^{4mn}_r$ is the unipotent radical. Take any $\wt{\xi} \in \wt{\CE}_{\tau, \wt{\sigma}_{4(m-1)n+2n}}$,
we will calculate the constant term of $FJ^{\phi}_{\psi^1_{n-1}}(\wt{\xi})$ along 
$P^{4mn}_r$, which is denoted by $\CC_{N^{4mn}_r}(FJ^{\phi}_{\psi^1_{n-1}}(\wt{\xi}))$.

By Theorem 7.8 of \cite{GRS11}, 
\begin{align}\label{thm5equ1}
\begin{split}
& \CC_{N^{4mn}_r}(FJ^{\phi}_{\psi^1_{n-1}}(\wt{\xi}))\\
= & \sum_{k=0}^r \sum_{\gamma \in P^1_{r-k, 1^k}(F) \bs GL_r(F)}
\int_{L(\BA)} \phi_1(i(\lambda)) FJ^{\phi_2}_{\psi^1_{n-1+k}} 
(\CC_{N^{4mn+2n}_{r-k}} (\wt{\xi}))(\hat{\gamma} \lambda \beta) d \lambda,
\end{split}
\end{align}
where $N^{4mn+2n}_{r-k}$ is the unipotent radical of the parabolic 
subgroup $P^{4mn+2n}_{r-k}$ of ${Sp}_{4mn+2n}$ with Levi isomorphic to 
$GL_{r-k} \times Sp_{4mn+2n-2r+2k}$, and it is identified with
it's image in $\wt{Sp}_{4mn+2n}$; $P^1_{r-k, 1^k}$ is a subgroup of $GL_r$
consisting of matrices of the form 
$\begin{pmatrix}
g & x\\
0 & z
\end{pmatrix}$, with $z \in U_k$, the standard maximal unipotent subgroup of $GL_k$;
for $a \in GL_j$, $j \leq 2mn+n$, $\hat{a}=\diag(a, I_{4mn+2n-2j}, a^*)$;
$L$ is a unipotent subgroup, consisting of matrices of the form
$\lambda = \begin{pmatrix}
I_r & 0\\
x & I_n
\end{pmatrix}^{\wedge}$, and $i(\lambda)$ is the last row of $x$;
$\beta=\begin{pmatrix}
0 & I_r\\
I_n & 0
\end{pmatrix}^{\wedge}$; $\phi = \phi_1 \otimes \phi_2$, with 
$\phi_1 \in \CS(\BA^r)$, $\phi_2 \in \CS(\BA^{2mn-r})$; 
$$FJ^{\phi_2}_{\psi^1_{n-1+k}} 
(\CC_{N^{4mn+2n}_{r-k}} (\wt{\xi}))(\hat{\gamma} \lambda \beta):=
FJ^{\phi_2}_{\psi^1_{n-1+k}} 
(\CC_{N^{4mn+2n}_{r-k}} 
(\rho(\hat{\gamma} \lambda \beta)\wt{\xi}))(I),$$
with $\rho(\hat{\gamma} \lambda \beta)$ denoting the right translation by $\hat{\gamma} \lambda \beta$; 
$\CC_{N^{4mn+2n}_{r-k}} (\rho(\hat{\gamma} \lambda \beta)\wt{\xi})$ is restricted to $\wt{Sp}_{4mn+2n-2r+2k}(\BA)$,
then we apply the Fourier-Jacobi coefficient $FJ^{\phi_2}_{\psi^1_{n-1+k}}$, 
taking automorphic forms on $\wt{Sp}_{4mn+2n-2r+2k}(\BA)$
to $Sp_{4mn-2r}(\BA)$.

By the cuspidal support of $\wt{\xi}$, 
$\CC_{N^{4mn+2n}_{r-k}} (\wt{\xi})$ is identically zero, unless $r=k$
or $r-k = 2n$. When $r=k$, the corresponding term is zero, because
$FJ^{\phi_2}_{\psi^1_{n-1+r}} 
(\wt{\xi})$ is zero, by Theorem \ref{thm10}.
When $r-k=2n$, the restriction of $\CC_{N^{4mn+2n}_{2n}} (\wt{\xi})$ to $\wt{Sp}_{4mn-2n}(\BA)$
is actually a vector in $\wt{\sigma}_{4(m-1)n+2n}$.
Hence, $FJ^{\phi_2}_{\psi^1_{n-1+k}} 
(\CC_{N^{4mn+2n}_{r-k}} (\wt{\xi}))$ is identically zero,
for $0 \leq k \leq r$,
because $\wt{\sigma}_{4(m-1)n+2n}$ has no nonzero Fourier coefficients $FJ_{\psi^{1}_{n-1}}$, and 
$\ul{p}(\wt{\sigma}_{4(m-1)n+2n})=
[(2n)^{2(m-1)}(2n_1)^{s_1}(2n_2)^{s_2} \cdots (2n_k)^{s_k}]$.
So, when $r-k=2n$, the corresponding term is also zero.

Therefore, we have shown that $\CC_{N^{4mn}_r}(FJ^{\phi}_{\psi^1_{n-1}}(\wt{\xi}))$
is identically zero for any $1 \leq r \leq 2mn$,
and for any
$\wt{\xi} \in \wt{\CE}_{\tau, \wt{\sigma}_{4(m-1)n+2n}}$,
which implies that $\CD_{2n, \psi^{1}}^{4mn+2n}
(\wt{\CE}_{\tau, \wt{\sigma}_{4(m-1)n+2n}})$ is a cuspidal
representation. This completes the proof of (1).

To prove (2), we need to show that for every irreducible component 
$\pi$ of $\CD_{2n, \psi^{1}}^{4mn+2n}
(\wt{\CE}_{\tau, \wt{\sigma}_{4(m-1)n+2n}})$,

(2-1) $\ul{p}(\pi)=[(2n)^{2m-1}(2n_1)^{s_1}(2n_2)^{s_2} \cdots (2n_k)^{s_k}]$;

(2-2) $\pi$ is nearly equivalent to
the residual representation $\CE_{\Delta(\tau, m)}$;

(2-3) $\pi$ has a nonzero Fourier coefficient $FJ_{\psi^{-1}_{n-1}}$.

(2-1) is obvious by Theorem \ref{thm10},
and by Lemma 2.6 of \cite{GRS05} or Lemma 3.1 of \cite{JL13b}.
(2-2) follows easily from \eqref{thm10equ2}, because
the induced representation on the right hand side
of \eqref{thm10equ2} is also an unramified 
component of $\CE_{\Delta(\tau, m)}$.

To show (2-3), as in the proof of Proposition 3.4 of \cite{GJS12},
we need to consider the following integral
\begin{equation}\label{thm5equ2}
\langle \varphi_{\pi}, FJ^{\phi}_{\psi^1_{n-1}}(\wt{\xi}) \rangle
=\int_{[Sp_{4mn}]} \varphi_{\pi}(h)\ol{FJ^{\phi}_{\psi^1_{n-1}}(\wt{\xi})(h)}
dh,
\end{equation}
which is nonzero for some data $\varphi_{\pi} \in \pi$,
$\wt{\xi} \in \wt{\CE}_{\tau, \wt{\sigma}_{4(m-1)n+2n}}$,
since $\pi$ is an irreducible component 
of $\CD_{2n, \psi^{1}}^{4mn+2n}
(\wt{\CE}_{\tau, \wt{\sigma}_{4(m-1)n+2n}})$.

Assume that $\wt{\xi} = \Res_{s=\frac{m+1}{2}} \wt{E}(\phi_s, \cdot)$,
then from \eqref{thm5equ2}, we know that the following integral
is also nonzero for some choice of data:
\begin{equation}\label{thm5equ3}
\langle \varphi_{\pi}, FJ^{\phi}_{\psi^1_{n-1}}(\wt{E}(\phi_s, \cdot)) \rangle
=\int_{[Sp_{4mn}]} \varphi_{\pi}(h)\ol{FJ^{\phi}_{\psi^1_{n-1}}(\wt{E}(\phi_s, \cdot))(h)}
dh.
\end{equation}
Then, by the unfolding in Theorem 3.3 of \cite{GJRS11} (take $m=2n$, $r=n$ there),
the non-vanishing of the integral in \eqref{thm5equ2}
implies the non-vanishing of $FJ_{\psi^{-1}_{n-1}}(\pi)$.

This finishes the proof of (2),
and completes the proof of the theorem.
\end{proof}

The next theorem implies that $\Psi$ is well-defined.
We will prove it in the next section.

\begin{thm}\label{thm6}
$\CD_{2n, \psi^{-1}}^{4mn}(\sigma_{4mn})$
is irreducible, and 
$$
\CD_{2n, \psi^{-1}}^{4mn}(\sigma_{4mn}) \in \CN'_{\wt{Sp}_{4(m-1)n+2n}}(\tau, \psi),
$$
for
any $\sigma_{4mn} \in \CN_{{Sp}_{4mn}}(\tau, \psi)$.
\end{thm}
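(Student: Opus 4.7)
The plan is to verify in order: non-vanishing and genuine cuspidality of $\CD^{4mn}_{2n,\psi^{-1}}(\sigma_{4mn})$; the partition identity $\ul{p}(\CD^{4mn}_{2n,\psi^{-1}}(\sigma_{4mn}))=[(2n)^{2(m-1)}(2n_1)^{s_1}\cdots(2n_k)^{s_k}]$; the near equivalence with $\wt{\CE}_{\Delta(\tau,m-1)\otimes\wt{\pi}}$; the vanishing of every $FJ_{\psi^1_{n-1}}$ Fourier coefficient; and finally irreducibility. Non-vanishing is immediate from $\sigma_{4mn}\in\CN_{Sp_{4mn}}(\tau,\psi)$, while cuspidality is the tower property (Theorem~7.10 of \cite{GRS11}), as remarked before the statement.

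For the partition identity, the lower bound follows by factoring a nonzero Fourier coefficient of $\sigma_{4mn}$ attached to $\ul{p}(\sigma_{4mn})=[(2n)^{2m-1}(2n_1)^{s_1}\cdots(2n_k)^{s_k}]$ through the descent, as the composite $[(2n)1^{4mn-2n}]\circ[(2n)^{2(m-1)}(2n_1)^{s_1}\cdots(2n_k)^{s_k}]$, and invoking Lemma~2.6 of \cite{GRS03} or Lemma~3.1 of \cite{JL13b}. The upper bound runs in the opposite direction: any partition $\mu$ strictly larger than $[(2n)^{2(m-1)}(2n_1)^{s_1}\cdots(2n_k)^{s_k}]$ providing a nonzero Fourier coefficient on the descent would, composed back with $[(2n)1^{4mn-2n}]$, yield a nonzero Fourier coefficient of $\sigma_{4mn}$ attached to a partition strictly dominating $\ul{p}(\sigma_{4mn})$, contradicting maximality.

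Near equivalence is an unramified local computation. At each unramified place $v$, the near-equivalence with $\CE_{\Delta(\tau,m)}$ defining $\CN_{Sp_{4mn}}(\tau,\psi)$ pins down $\sigma_{4mn,v}$ as the unique unramified constituent of $\Ind^{Sp_{4mn}(F_v)}_{P_{(2m)^n}(F_v)}\otimes_{i=1}^n\lvert\cdot\rvert^{\alpha_i}\chi_i(\det_{GL_{2m}})\otimes 1_{Sp_0}$, and Part (1) of Proposition~\ref{prop1} then identifies the corresponding component of $\CD^{4mn}_{2n,\psi^{-1}}(\sigma_{4mn})$ as the unique unramified constituent of $\Ind^{\wt{Sp}_{4(m-1)n+2n}(F_v)}_{\wt{P}_{(2m-1)^n}(F_v)}\mu_{\psi}\otimes_{i=1}^n\lvert\cdot\rvert^{\alpha_i}\chi_i(\det_{GL_{2m-1}})\otimes 1_{\wt{Sp}_0}$, matching the unramified component of $\wt{\CE}_{\Delta(\tau,m-1)\otimes\wt{\pi}}$. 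The vanishing of the $FJ_{\psi^1_{n-1}}$ coefficient is the most delicate of the defining conditions, and I would argue it by a further tower analysis, combining the partition identity just established with the global constraint imposed by the Arthur parameter $(\tau,2m-1)\boxplus(1_{GL_1},1)$ via Theorem~1.3, Proposition~6.4 and Remark~6.5 of \cite{JL13c}, together with a root-exchange Fourier expansion of the kind run in the proof of Theorem~\ref{main1part2}.

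The main obstacle is irreducibility. My plan is a bootstrapping argument through Theorem~\ref{thm7} combined with the unfolding technique used in Theorem~\ref{thm5}. Decompose $\CD^{4mn}_{2n,\psi^{-1}}(\sigma_{4mn})=\bigoplus_i\tau_i$ into irreducible pieces; the preceding steps applied summand-wise place each $\tau_i$ in $\CN'_{\wt{Sp}_{4(m-1)n+2n}}(\tau,\psi)$, so the residual Eisenstein series $\wt{\CE}_{\tau,\tau_i}$ is defined. Theorem~\ref{thm5} and Theorem~\ref{thm7}(2) jointly give $\CD^{4mn}_{2n,\psi^{-1}}\circ\CD^{4mn+2n}_{2n,\psi^1}(\wt{\CE}_{\tau,\tau_i})=\tau_i$ for every $i$. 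The unfolding underlying part (2-3) of the proof of Theorem~\ref{thm5}, now run with $\varphi_\pi=\varphi_{\sigma_{4mn}}$ paired against $FJ^{\phi}_{\psi^1_{n-1}}$ of $\wt{\CE}_{\tau,\tau_i}$, shows that $\sigma_{4mn}\subset\CD^{4mn+2n}_{2n,\psi^1}(\wt{\CE}_{\tau,\tau_i})$ for at least one index $i$. Since all $\tau_i$ share the same Arthur parameter $(\tau,2m-1)\boxplus(1_{GL_1},1)$ by the established near-equivalence, a uniqueness argument of the type used for analogous irreducibility statements, combined with the Theorem~\ref{thm7}(2) identity applied summand-wise, forces the direct sum to consist of a single summand, completing the proof.
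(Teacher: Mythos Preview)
Your treatment of cuspidality, the partition identity, and near equivalence matches the paper's (these are handled inside Theorem~\ref{thm9} by exactly the references you cite). For the vanishing of $FJ_{\psi^1_{n-1}}$ your sketch is too loose: the descent lives on the metaplectic group $\wt{Sp}_{4(m-1)n+2n}$, so invoking the Arthur parameter $(\tau,2m-1)\boxplus(1_{GL_1},1)$ and the bounds from \cite{JL13c} does not apply directly. The paper instead argues on $\sigma_{4mn}$ itself: by Theorem~5.2 of \cite{JL13b}, the cuspidal $\sigma_{4mn}$ admits no Fourier coefficient for the character $\psi_{[(2n)^{2m-1}(2n_1)^{s_1}\cdots],\,\{-1,1\}\cup\ul{a}'}$, and then Lemma~3.1 of \cite{JL13b} transports this to the vanishing of $FJ_{\psi^1_{n-1}}$ on the descent.

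The real gap is in your irreducibility argument. You assert that the unfolding behind part~(2-3) of Theorem~\ref{thm5}, run with $\varphi_{\sigma_{4mn}}$ against $FJ^{\phi}_{\psi^1_{n-1}}(\wt{\CE}_{\tau,\tau_i})$, gives $\sigma_{4mn}\subset\CD^{4mn+2n}_{2n,\psi^1}(\wt{\CE}_{\tau,\tau_i})$ for some $i$. But that unfolding only shows an implication in one direction: \emph{if} the $L^2$ pairing is nonzero, \emph{then} $FJ_{\psi^{-1}_{n-1}}(\sigma_{4mn})\neq 0$. You are trying to use the converse, and you have given no reason why the pairing $\langle\varphi_{\sigma_{4mn}},FJ^{\phi}_{\psi^1_{n-1}}(\wt{\xi})\rangle$ with $\wt{\xi}\in\wt{\CE}_{\tau,\tau_i}$ should be nonzero. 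Knowing only that $\tau_i$ sits inside $\CD^{4mn}_{2n,\psi^{-1}}(\sigma_{4mn})$ does not produce this; it is precisely the statement you are trying to prove.

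The paper circumvents this by going \emph{up} one level before coming down. Since $\sigma_{4mn}\in\CN_{Sp_{4mn}}(\tau,\psi)$ has a nonzero $FJ_{\psi^{-1}_{n-1}}$ coefficient, one forms the residual representation $\CE_{\tau,\sigma_{4mn}}$ on $Sp_{4(m+1)n}(\BA)$ and proves the analogue of Theorem~\ref{thm7} at that level (Theorem~\ref{thm8}):
\[
\CD^{4mn+2n}_{2n,\psi^1}\circ\CD^{4(m+1)n}_{2n,\psi^{-1}}(\CE_{\tau,\sigma_{4mn}})=\sigma_{4mn}.
\]
A constant-term computation (Theorem~\ref{thm9}) then shows that $\CD^{4(m+1)n}_{2n,\psi^{-1}}(\CE_{\tau,\sigma_{4mn}})$ is square-integrable and shares an irreducible constituent $\pi$ with $\wt{\CE}_{\tau,\wt{\sigma}}$ for some irreducible summand $\wt{\sigma}$ of $\CD^{4mn}_{2n,\psi^{-1}}(\sigma_{4mn})$. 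From the displayed identity one gets $\sigma_{4mn}=\CD^{4mn+2n}_{2n,\psi^1}(\pi)\subset\CD^{4mn+2n}_{2n,\psi^1}(\wt{\CE}_{\tau,\wt{\sigma}})$, and now applying $\CD^{4mn}_{2n,\psi^{-1}}$ and Theorem~\ref{thm7}(2) gives $\CD^{4mn}_{2n,\psi^{-1}}(\sigma_{4mn})\subset\wt{\sigma}$, hence equality and irreducibility. The step you were missing is this passage through $Sp_{4(m+1)n}$: it is what manufactures the containment $\sigma_{4mn}\subset\CD^{4mn+2n}_{2n,\psi^1}(\wt{\CE}_{\tau,\wt{\sigma}})$ that your unfolding could not deliver.
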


Now by Theorems \ref{thm7}, \ref{thm5}, \ref{thm6},
we are able to conclude the Part (1) of Theorem \ref{thm1}.

\begin{thm}[Part (1) of Theorem \ref{thm1}]\label{thm11}
There is a surjective map 
$$
\Psi: \CN_{{Sp}_{4mn}}(\tau, \psi) \rightarrow \CN'_{\wt{Sp}_{4(m-1)n+2n}}(\tau, \psi)
$$
$$
\sigma_{4mn} \mapsto \CD_{2n, \psi^{-1}}^{4mn}(\sigma_{4mn}).
$$
\end{thm}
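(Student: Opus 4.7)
The plan is to assemble the three preceding results (Theorems \ref{thm7}, \ref{thm5}, and \ref{thm6}), which together do essentially all the work. First I would verify well-definedness of $\Psi$ by invoking Theorem \ref{thm6} directly: for any $\sigma_{4mn} \in \CN_{Sp_{4mn}}(\tau, \psi)$, the descent $\CD_{2n, \psi^{-1}}^{4mn}(\sigma_{4mn})$ is irreducible and belongs to $\CN'_{\wt{Sp}_{4(m-1)n+2n}}(\tau, \psi)$, so $\Psi$ indeed maps into the stated target set.

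For surjectivity, I would fix an arbitrary $\wt{\sigma}_{4(m-1)n+2n} \in \CN'_{\wt{Sp}_{4(m-1)n+2n}}(\tau, \psi)$ and produce a preimage by running the construction backwards through the residual representation $\wt{\CE}_{\tau, \wt{\sigma}_{4(m-1)n+2n}}$ of $\wt{Sp}_{4mn+2n}(\BA)$. By Theorem \ref{thm5} its Fourier--Jacobi descent
$$\Sigma := \CD_{2n, \psi^{1}}^{4mn+2n}(\wt{\CE}_{\tau, \wt{\sigma}_{4(m-1)n+2n}})$$
is a cuspidal automorphic representation of $Sp_{4mn}(\BA)$ whose every irreducible constituent lies in $\CN_{Sp_{4mn}}(\tau, \psi)$. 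Since $\Sigma$ is cuspidal it is completely reducible; write $\Sigma \cong \oplus_j \sigma_j$. Applying $\CD_{2n, \psi^{-1}}^{4mn}$ termwise (the descent map commutes with direct sums since it is defined by a Fourier--Jacobi integral) and invoking Theorem \ref{thm7}(2) yields
$$\bigoplus_j \CD_{2n, \psi^{-1}}^{4mn}(\sigma_j) = \CD_{2n, \psi^{-1}}^{4mn}(\Sigma) = \wt{\sigma}_{4(m-1)n+2n}.$$
By Theorem \ref{thm6} each summand on the left is either zero or an irreducible representation in $\CN'_{\wt{Sp}_{4(m-1)n+2n}}(\tau, \psi)$. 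Since the right-hand side is irreducible, exactly one index $j_0$ must satisfy $\CD_{2n, \psi^{-1}}^{4mn}(\sigma_{j_0}) = \wt{\sigma}_{4(m-1)n+2n}$. This $\sigma_{j_0}$ is the desired preimage, and $\Psi$ is surjective.

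The hard part of the entire argument is not in this assembly but is packaged into Theorem \ref{thm6}, whose proof is the subject of the next section: one must show both that $\CD_{2n, \psi^{-1}}^{4mn}(\sigma_{4mn})$ is irreducible and that its $FJ_{\psi^{1}_{n-1}}$-Fourier coefficient vanishes, so that the image genuinely lands in the primed set $\CN'_{\wt{Sp}_{4(m-1)n+2n}}(\tau, \psi)$ rather than in the unprimed counterpart. Granting Theorem \ref{thm6}, the theorem above reduces to the clean two-step bookkeeping argument sketched here.
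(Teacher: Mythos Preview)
Your assembly is correct and follows the same route as the paper: well-definedness from Theorem~\ref{thm6}, then surjectivity by descending $\wt{\CE}_{\tau,\wt{\sigma}_{4(m-1)n+2n}}$, invoking Theorem~\ref{thm5} to land in $\CN_{Sp_{4mn}}(\tau,\psi)$, and Theorem~\ref{thm7}(2) to recover $\wt{\sigma}_{4(m-1)n+2n}$. One small imprecision: the descent of a direct sum is only the \emph{sum} of the descents as spaces of automorphic forms, not a priori a direct sum, and in fact Theorem~\ref{thm6} forces each $\CD_{2n,\psi^{-1}}^{4mn}(\sigma_j)$ to be irreducible (never zero), hence equal to $\wt{\sigma}_{4(m-1)n+2n}$; so every component, not exactly one, is a preimage---but either way surjectivity follows.
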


\begin{proof}
Theorem \ref{thm6} implies that $\Psi$ is well-defined.
Theorem \ref{thm7} and Theorem \ref{thm5} imply that
for any $ \wt{\sigma}_{4(m-1)n+2n} \in \CN'_{\wt{Sp}_{4(m-1)n+2n}}(\tau, \psi)$,
take any irreducible component $\pi$ of
$\CD_{2n, \psi^{1}}^{4mn+2n}
(\wt{\CE}_{\tau, \wt{\sigma}_{4(m-1)n+2n}})$,
which is inside $\CN_{{Sp}_{4mn}}(\tau, \psi)$,
then 
\begin{align*}
& \Psi(\pi)\\
= & \CD_{2n, \psi^{-1}}^{4mn} (\pi)\\
\subseteq & \CD_{2n, \psi^{-1}}^{4mn} \circ \CD_{2n, \psi^{1}}^{4mn+2n}
(\wt{\CE}_{\tau, \wt{\sigma}_{4(m-1)n+2n}})\\
= & \wt{\sigma}_{4(m-1)n+2n}.
\end{align*}
Theorem \ref{thm5} also implies that 
$\CD_{2n, \psi^{-1}}^{4mn} (\pi) \neq 0$.
Since
$\wt{\sigma}_{4(m-1)n+2n}$ is irreducible, 
actually we have 
$\CD_{2n, \psi^{-1}}^{4mn} (\pi)
= \wt{\sigma}_{4(m-1)n+2n}$.

Hence $\Psi$ is surjective.
\end{proof}

\section{Proof of Theorem \ref{thm6}}

For any $\sigma_{4mn} \in \CN_{{Sp}_{4mn}}(\tau, \psi)$, 
we know that the Eisenstein
series corresponding to 
$$
\Ind_{P_{2n}(\BA)}^{Sp_{4(m+1)n}(\BA)} \tau \lvert \cdot \rvert^{s} \otimes \sigma_{4mn}
$$
has a simple pole at $s=\frac{m+1}{2}$. Let $\CE_{\tau, \sigma_{4mn}}$ be the residual
representation of $Sp_{4(m+1)n}(\BA)$ generated by
the corresponding residues.

First, we have a similar result as that in Theorem \ref{thm7}.

\begin{thm}\label{thm8}
(1)
$$
\CD_{2n, \psi^{1}}^{4mn+2n} \circ \CD_{2n, \psi^{-1}}^{4(m+1)n}
(\CE_{\tau, \sigma_{4mn}}) \neq 0.
$$

(2)
$$
\CD_{2n, \psi^{1}}^{4mn+2n} \circ \CD_{2n, \psi^{-1}}^{4(m+1)n}
(\CE_{\tau, \sigma_{4mn}})=\sigma_{4mn}.
$$
\end{thm}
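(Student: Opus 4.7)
The plan is to mirror the proof of Theorem \ref{thm7} verbatim, with the roles of $\psi^{1}$ and $\psi^{-1}$ interchanged and the roles of symplectic and metaplectic groups swapped. The residual representation $\CE_{\tau,\sigma_{4mn}}$ on $Sp_{4(m+1)n}(\BA)$ plays the role that $\wt{\CE}_{\tau,\wt{\sigma}_{4(m-1)n+2n}}$ played before, and the ``target'' $\sigma_{4mn}$ is now an irreducible cuspidal representation of $Sp_{4mn}(\BA)$ rather than of $\wt{Sp}_{4(m-1)n+2n}(\BA)$. Before starting, I would establish the two preparatory facts that make the argument run: first, an analogue of Theorem \ref{thm10}, namely that $\ul{p}=[(2n)^{2m+1}(2n_1)^{s_1}\cdots(2n_k)^{s_k}]$ is a maximal partition providing nonzero Fourier coefficients for $\CE_{\tau,\sigma_{4mn}}$; and second, an analogue of Lemma \ref{constantterm} computing the constant term of $\CE_{\tau,\sigma_{4mn}}$ along $P_{2n}$, asserting that it lies in $A(N_{2n}(\BA)M_{2n}(F)\bs Sp_{4(m+1)n}(\BA))_{\tau|\cdot|^{-(m+1)/2}\otimes \sigma_{4mn}}$. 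Both follow by the same constant-term calculation as in Section~3 (only two double cosets survive by cuspidal support, and the surviving $\omega_1$-term gives the desired inducing data after taking the residue at $s=(m+1)/2$).

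For Part (1), I would start with $\CD_{2n,\psi^{1}}^{4mn+2n}\circ\CD_{2n,\psi^{-1}}^{4(m+1)n}(\CE_{\tau,\sigma_{4mn}})\neq 0$ expressed via Corollary 2.4 of \cite{JL13b} as a single integral of $\varphi\in\CE_{\tau,\sigma_{4mn}}$ against characters attached to the composite partition $[(2n)1^{4(m+1)n-2n}]\circ[(2n)1^{4mn}]$ with signs $\{-1,1\}$. Then I would carry out exactly the same chain of manipulations: conjugate by the Weyl element $\omega_1$ of \eqref{thm7equ2} (adapted to rank $4(m+1)n$), apply Lemma 2.3 of \cite{JL13b} to the pairs $(R^{1}_{i,j},C^{1}_{i,j})$ to fill in the strictly-lower-triangular block of $Z\in GL_{2n}$, conjugate by $\epsilon$ to pass from the ``two-block'' Whittaker-type character on $V_{2^n}$ to a character on $V_{2n}$, conjugate by $\omega_2$ of \eqref{thm7equ37}, apply Lemma 2.3 of \cite{JL13b} to the pairs $(R^{2}_i,C^{2}_i)$ for $1\le i\le n$, and for $n+1\le i\le 2n-1$ take Fourier expansion along $X_{e_i+e_i}$ (the nontrivial orbit contributes a Fourier coefficient attached to $[(2n+2)1^{\cdots}]$, killed by the upper bound established above), then apply Lemma 2.3 to $(R^2_i,C^2_i)$ and $(R^3_i,C^3_i)$. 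Finally, invoke Lemma \ref{lem2} to upgrade $\varphi^{\psi_{N_{1^{2n-1}}}}$ to $\varphi^{\widetilde{\psi}_{N_{1^{2n}}}}$, then use the constant-term formula along $P_{2n}$ to rewrite the inner integral as a Whittaker coefficient of a vector in $\tau$, which is nonzero since $\tau$ is generic.

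For Part (2), I would repeat the above calculation, keeping careful track of the two theta series $\theta^{\cdot,\phi_1}_{\psi^1}$ and $\theta^{\cdot,\phi_2}_{\psi^{-1}}$ as in \eqref{thm7equ23}--\eqref{thm7equ40}. The unfolding of $\theta^{\cdot,\phi}_{\psi^1}$ at \eqref{thm7equ24} collapses a sum over $F^n$ with the integration over a complementary unipotent piece; the identity \eqref{thm7equ31} (equivalently (5.9) of \cite{GJS12}) combines the two remaining theta series into a single theta on a doubled space, which is then unfolded to produce ``hatted'' translations $\widehat{\xi}$ of $\varphi$ (as in \eqref{thm7equ36}). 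Every application of Lemma 2.3 of \cite{JL13b} must be replaced by its theta-compatible version Lemma 2.5 of \cite{JL13b}, and I would use the element $t\in Sp_{4(m+1)n}(F)$ (analogous to the $t$ below \eqref{thm7equ39}) to conjugate the alternating sign character into the standard Whittaker character of $GL_{2n}$. The end result is that, as a function of $g\in Sp_{4mn}(\BA)$, the inner integral produces (via the constant-term formula along $P_{2n}$) a section in $\sigma_{4mn}$. This gives the containment $\CD_{2n,\psi^{1}}^{4mn+2n}\circ\CD_{2n,\psi^{-1}}^{4(m+1)n}(\CE_{\tau,\sigma_{4mn}})\subset\sigma_{4mn}$; combined with Part (1) and the irreducibility of $\sigma_{4mn}$, equality follows.

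The main obstacle I expect is purely bookkeeping: verifying that the indexing conventions for the unipotent subgroups $R^1_i$, $C^1_i$, $R^2_i$, $C^2_i$, $R^3_i$, $C^3_i$ used in \eqref{thm7equ5}--\eqref{thm7equ18} and \eqref{thm7equ28}--\eqref{thm7equ40} translate correctly to the larger ambient group $Sp_{4(m+1)n}$, and that at each Fourier-expansion step the ``bad'' orbits produce only Fourier coefficients attached to partitions that are excluded by the preparatory maximality result. No substantively new idea is required beyond those already deployed in Theorem \ref{thm7}, which is why the authors indicate that the proof proceeds by essentially the same argument.
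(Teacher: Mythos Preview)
Your proposal is correct and matches the paper's own approach exactly: the paper simply writes ``The proof is very similar to that of Theorem \ref{thm7}. We omit it here,'' and you have spelled out precisely that parallel argument. One small numerical point to double-check against the paper's later computation in Theorem \ref{thm9}: there the constant term along $P_{2n}$ carries the exponent $-\frac{2m+1}{2}$ rather than $-\frac{m+1}{2}$ (the paper itself is not entirely consistent on where the pole lies), but this does not affect the structure of your argument, since all that matters is that the constant term lands in $\tau\otimes\sigma_{4mn}$ up to a character twist.
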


\begin{proof}
The proof is very similar to that of Theorem \ref{thm7}.
We omit it here.
\end{proof}

Next, we have a similar result as that in Theorem 5.8 of \cite{GJS12}.

\begin{thm}\label{thm9}
For any $\sigma_{4mn} \in \CN_{{Sp}_{4mn}}(\tau, \psi)$, the automorphic 
representation $\CD_{2n, \psi^{-1}}^{4(m+1)n}
(\CE_{\tau, \sigma_{4mn}})$ is square-integrable. Moreover,
there is an irreducible representation $\wt{\sigma}_{4(m-1)+2n}$,
which is a component of
$$
\CD_{2n, \psi^{-1}}^{4mn}(\sigma_{4mn}) \subset \CN'_{\wt{Sp}_{4(m-1)n+2n}}(\tau, \psi),
$$
such that the representation space of $ \CD_{2n, \psi^{-1}}^{4(m+1)n}
(\CE_{\tau, \sigma_{4mn}})$ has a non-trivial intersection 
with the representation space of $\wt{\CE}_{\tau, \wt{\sigma}_{4(m-1)n+2n}}$.
\end{thm}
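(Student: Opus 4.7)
The plan is to follow the scheme of Theorem 5.8 of \cite{GJS12}, adapted to the higher-rank setting. Square-integrability is obtained by Jacquet's criterion after computing constant terms along every maximal parabolic of $\wt{Sp}_{4mn+2n}$, while the intersection is extracted from a Petersson pairing unfolded in the same style as the proofs of Theorems \ref{thm7} and \ref{thm8}.

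For square-integrability, I would compute, for each $1\le r\le 2mn+n$,
$$\CC_{\wt{N}_r^{4mn+2n}}\bigl(FJ^{\phi}_{\psi^{-1}_{n-1}}(\eta)\bigr),\qquad \eta\in\CE_{\tau,\sigma_{4mn}},$$
by swapping the descent and constant-term operators via Theorem 7.8 of \cite{GRS11} (exactly as in the proof of Theorem \ref{thm5}). This produces a double sum indexed by $0\le k\le r$ of Fourier-Jacobi coefficients of the constant terms $\CC_{N_{r-k}^{4(m+1)n}}(\eta)$. The cuspidal support of $\CE_{\tau,\sigma_{4mn}}$ annihilates every term with $r-k\notin\{0,2n\}$, and the $r=k$ terms vanish by the maximal-partition information of Theorem \ref{thm10}. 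The surviving $r-k=2n$ contributions lie in explicit parabolically induced representations whose inducing characters I read off via a Lemma \ref{constantterm}-style Bruhat analysis. A direct exponent check places each Mellin parameter strictly in the negative cone, and Jacquet's criterion then yields square-integrability.

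For the intersection, I fix an irreducible summand $\wt{\sigma}_{4(m-1)n+2n}$ of $\CD_{2n,\psi^{-1}}^{4mn}(\sigma_{4mn})$ --- the latter is nonzero because $\sigma_{4mn}\in\CN_{Sp_{4mn}}(\tau,\psi)$ carries a nonzero $FJ_{\psi^{-1}_{n-1}}$ --- and analyze the Petersson pairing
$$\bigl\langle\wt{\xi},\,\CD_{2n,\psi^{-1}}^{4(m+1)n}(\eta)\bigr\rangle,\qquad \wt{\xi}\in\wt{\CE}_{\tau,\wt{\sigma}_{4(m-1)n+2n}},\;\eta\in\CE_{\tau,\sigma_{4mn}}.$$
Writing $\wt{\xi}$ as a residue at $s=m$ of the Eisenstein series on the parabolic with Levi $\wt{GL}_{2n}\times\wt{Sp}_{4(m-1)n+2n}$, I would unfold the Eisenstein sum and interchange it with the inner Fourier-Jacobi descent in the manner of Theorems \ref{thm7} and \ref{thm8}. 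After the cross-terms are killed by cuspidality and by the vanishing statements of Theorems \ref{thm5} and \ref{thm10}, the pairing collapses to a nonzero Whittaker integral on $\tau$ times the Petersson pairing of $\wt{\sigma}_{4(m-1)n+2n}$ against $\CD_{2n,\psi^{-1}}^{4mn}(\sigma_{4mn})$, which is nonzero by the choice of summand. Since both $\CD_{2n,\psi^{-1}}^{4(m+1)n}(\CE_{\tau,\sigma_{4mn}})$ and $\wt{\CE}_{\tau,\wt{\sigma}_{4(m-1)n+2n}}$ are square-integrable (the former by the first step), the non-vanishing of the pairing forces them to share an irreducible summand, which is the claimed non-trivial intersection.

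The principal obstacle lies in the second step: Theorem \ref{thm6} (irreducibility of $\CD_{2n,\psi^{-1}}^{4mn}(\sigma_{4mn})$) is exactly what the present theorem is supposed to help prove and cannot be invoked, so $\CD_{2n,\psi^{-1}}^{4mn}(\sigma_{4mn})$ is a priori reducible. One must therefore isolate a single irreducible summand $\wt{\sigma}_{4(m-1)n+2n}$ and verify that the unfolding really projects onto that summand, with no accidental cancellation coming from the other summands. In \cite{GJS12} this is handled essentially by hand when $m=1$; for general $m$ the required control of residual boundary terms rests on the maximal-partition result of Theorem \ref{thm10} and on the cuspidality analysis carried out in the proof of Theorem \ref{thm5}, and assembling these inputs into a single coherent unfolding is the technical heart of the argument.
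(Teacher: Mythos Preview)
Your square-integrability argument matches the paper's: both compute $\CC_{N^{4mn+2n}_r}(FJ^{\phi}_{\psi^{-1}_{n-1}}(\xi))$ via Theorem 7.8 of \cite{GRS11}, kill all terms except $r-k=2n$, $k=0$ by cuspidal support and Theorem \ref{thm10}, and read off the exponent $-m$ for Langlands' criterion.

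For the intersection, however, the paper takes a shorter route than your Petersson-pairing unfolding. The constant-term computation you already did for square-integrability yields, as a representation of $GL_{2n}(\BA)\times\wt{Sp}_{4(m-1)n+2n}(\BA)$,
\[
\CC_{N^{4mn+2n}_{2n}}\bigl(\CD_{2n,\psi^{-1}}^{4(m+1)n}(\CE_{\tau,\sigma_{4mn}})\bigr)
=\gamma_{\psi}\,\delta_{P^{4mn+2n}_{2n}}^{1/2}\,\lvert\det\rvert^{-m}\,\tau\otimes\CD_{2n,\psi^{-1}}^{4mn}(\sigma_{4mn}).
\]
Since $\sigma_{4mn}$ has a nonzero $FJ_{\psi^{-1}_{n-1}}$, this constant term is nonzero, so the descent is non-cuspidal; and since it is square-integrable (hence completely reducible in the discrete spectrum), any non-cuspidal irreducible summand has its cuspidal support read off directly from this formula as $\tau\otimes\wt{\sigma}$ for some irreducible summand $\wt{\sigma}$ of $\CD_{2n,\psi^{-1}}^{4mn}(\sigma_{4mn})$. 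That forces it to sit inside $\wt{\CE}_{\tau,\wt{\sigma}}$. This completely sidesteps the ``choose a summand and avoid cancellation'' obstacle you flagged: no pairing or Eisenstein unfolding against a non-cuspidal target is needed, and the irreducible $\wt{\sigma}_{4(m-1)n+2n}$ is produced \emph{a posteriori} rather than chosen in advance.

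One further point: the theorem asserts $\CD_{2n,\psi^{-1}}^{4mn}(\sigma_{4mn})\subset\CN'_{\wt{Sp}_{4(m-1)n+2n}}(\tau,\psi)$, and your proposal does not address this. The paper checks the four defining conditions separately: cuspidality via the tower property, the partition condition via Lemma 2.6 of \cite{GRS03}, near-equivalence to $\wt{\CE}_{\Delta(\tau,m-1)\otimes\wt{\pi}}$ via Proposition \ref{prop1}, and the vanishing of $FJ_{\psi^{1}_{n-1}}$ via Theorem 5.2 of \cite{JL13b}. You should supply these verifications as well.
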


\begin{proof}
We follow the constant term calculation in the proof of the Theorem \ref{thm5}.

Recall that $P^{4mn+2n}_r = M^{4mn+2n}_r N^{4mn+2n}_r$ (with $1 \leq r \leq 2mn+n$) is the standard parabolic subgroup
of ${Sp}_{4mn+2n}$ with Levi part $M^{4mn+2n}_r$ isomorphic to $GL_r \times Sp_{4mn+2n-2r}$,
$N^{4mn+2n}_r$ is the unipotent radical, and
$\wt{P}^{4mn+2n}_r$ is the pre-image of $P^{4mn+2n}_r$ in $\wt{Sp}_{4mn+2n}$.
Take any $\xi \in \CE_{\tau, \sigma_{4mn}}$,
we will calculate the constant term of $FJ^{\phi}_{\psi^{-1}_{n-1}}({\xi})$ along 
$\wt{P}^{4mn+2n}_r$, which is denoted by $\CC_{N^{4mn+2n}_r}(FJ^{\phi}_{\psi^{-1}_{n-1}}({\xi}))$.

By Theorem 7.8 of \cite{GRS11}, 
\begin{align}\label{thm9equ1}
\begin{split}
& \CC_{N^{4mn+2n}_r}(FJ^{\phi}_{\psi^{-1}_{n-1}}({\xi}))\\
= & \sum_{k=0}^r \sum_{\gamma \in P^1_{r-k, 1^k}(F) \bs GL_r(F)}
\int_{L(\BA)} \phi_1(i(\lambda)) FJ^{\phi_2}_{\psi^{-1}_{n-1+k}} 
(\CC_{N^{4mn+4n}_{r-k}} ({\xi}))(\hat{\gamma} \lambda \beta) d \lambda,
\end{split}
\end{align}
where $N^{4mn+4n}_{r-k}$ is the unipotent radical of the parabolic 
subgroup $P^{4mn+4n}_{r-k}$ of ${Sp}_{4mn+4n}$ with Levi isomorphic to 
$GL_{r-k} \times Sp_{4mn+4n-2r+2k}$; $P^1_{r-k, 1^k}$ is a subgroup of $GL_r$
consisting of matrices of the form 
$\begin{pmatrix}
g & x\\
0 & z
\end{pmatrix}$, with $z \in U_k$, the standard maximal unipotent subgroup of $GL_k$;
for $a \in GL_j$, $j \leq 2mn+2n$, $\hat{a}=\diag(a, I_{4mn+4n-2j}, a^*)$;
$L$ is a unipotent subgroup, consisting of matrices of the form
$\lambda = \begin{pmatrix}
I_r & 0\\
x & I_n
\end{pmatrix}^{\wedge}$, and $i(\lambda)$ is the last row of $x$;
$\beta=\begin{pmatrix}
0 & I_r\\
I_n & 0
\end{pmatrix}^{\wedge}$; $\phi = \phi_1 \otimes \phi_2$, with 
$\phi_1 \in \CS(\BA^r)$, $\phi_2 \in \CS(\BA^{2mn+n-r})$;
$$FJ^{\phi_2}_{\psi^{-1}_{n-1+k}} 
(\CC_{N^{4mn+4n}_{r-k}} ({\xi}))(\hat{\gamma} \lambda \beta):=
FJ^{\phi_2}_{\psi^{-1}_{n-1+k}} 
(\CC_{N^{4mn+4n}_{r-k}} (\rho(\hat{\gamma} \lambda \beta){\xi}))(I),$$
with $\rho(\hat{\gamma} \lambda \beta)$ denoting the right translation by $\hat{\gamma} \lambda \beta$;
$\CC_{N^{4mn+4n}_{r-k}} (\rho(\hat{\gamma} \lambda \beta){\xi})$ is restricted to ${Sp}_{4mn+4n-2r+2k}(\BA)$,
then we apply the Fourier-Jacobi coefficient $FJ^{\phi_2}_{\psi^{-1}_{n-1+k}}$, 
taking automorphic forms on ${Sp}_{4mn+4n-2r+2k}(\BA)$
to $\wt{Sp}_{4mn+2n-2r}(\BA)$.

By the cuspidal support of ${\xi}$, 
$\CC_{N^{4mn+4n}_{r-k}} ({\xi})$ is identically zero, unless $r=k$
or $r-k = 2n$. When $r=k$, the corresponding term is zero, because
$FJ^{\phi_2}_{\psi^{-1}_{n-1+r}} 
({\xi})$ is zero, by Theorem \ref{thm10}.
When $r-k=2n$, the restriction of $\CC_{N^{4mn+4n}_{2n}} ({\xi})$ to ${Sp}_{4mn}(\BA)$
is actually a vector inside ${\sigma}_{4mn}$.
Hence, $FJ^{\phi_2}_{\psi^{-1}_{n-1+k}} 
(\CC_{N^{4mn+4n}_{r-k}} ({\xi}))$ is not zero for $k=0$,
and is identically zero
for $1 \leq k \leq r$,
because ${\sigma}_{4mn}$ has a nonzero Fourier coefficient $FJ_{\psi^{-1}_{n-1}}$, and 
$\ul{p}({\sigma}_{4mn})=
[(2n)^{2m-1}(2n_1)^{s_1}(2n_2)^{s_2} \cdots (2n_k)^{s_k}]$.

Therefore,
\begin{align}\label{thm9equ2}
\begin{split}
& \CC_{N^{4mn+2n}_{2n}}(FJ^{\phi}_{\psi^{-1}_{n-1}}({\xi}))\\
= & \int_{L(\BA)} \phi_1(i(\lambda)) FJ^{\phi_2}_{\psi^{-1}_{n-1}} 
(\CC_{N^{4mn+4n}_{2n}} ({\xi}))(\lambda \beta) d \lambda.
\end{split}
\end{align}

By similar calculation as in the proof of Lemma \ref{constantterm},
when restricting to $GL_{2n}(\BA) \times {Sp}_{4mn}(\BA)$,
$$
\CC_{N^{4mn+4n}_{2n}} ({\xi}) \in \delta_{P^{4mn+4n}_{2n}}^{\frac{1}{2}} 
\lvert \det \rvert^{-\frac{2m+1}{2}} \tau \otimes \sigma_{4mn}.
$$

As in the proof of Theorem 2.5 \cite{GJS12}, we need to calculate 
the automorphic exponents attached to this non-trivial constant term
(for definition see \Rmnum{1}.3.3 \cite{MW95}). 
We consider the action of 
$$\hat{g}=\diag(g, I_{4mn-2n}, g^*) \in GL_{2n}(\BA) \times \wt{Sp}_{4mn-2n}(\BA).$$
Since $r=2n$, $\beta=\begin{pmatrix}
0 & I_{2n}\\
I_n & 0
\end{pmatrix}^{\wedge}$. 
$\beta \diag(I_n, \hat{g}, I_n) \beta^{-1} = \diag(g, I_{4mn}, g^*) =: \wt{g}$.
Then changing variables in
\eqref{thm9equ2} $\lambda \mapsto \wt{g} \lambda \wt{g}^{-1}$ will give 
a Jacobian $\lvert \det(g) \rvert^{-n}$. 
On the other hand, by Formula (1.4) \cite{GRS11}, 
the action of $\hat{g}$ on $\phi_1$ gives $\gamma_{\psi}(\det(g)) \lvert \det(g) \rvert^{\frac{1}{2}}$. Therefore, the $\hat{g}$ acts by $\tau(g)$ with 
character 
\begin{align*}
& \delta_{P^{4mn+4n}_{2n}}^{\frac{1}{2}} 
\lvert \det(g) \rvert^{-\frac{2m+1}{2}} \lvert \det(g) \rvert^{-n}
\gamma_{\psi}(\det(g)) \lvert \det(g) \rvert^{\frac{1}{2}}\\
= & \gamma_{\psi}(\det(g)) \delta_{P^{4mn+2n}_{2n}}^{\frac{1}{2}} 
\lvert \det(g) \rvert^{-m}.
\end{align*}
Hence, by Langlands square-integrability criterion (Lemma \Rmnum{1}.4.11 \cite{MW95}),
the automorphic 
representation $\CD_{2n, \psi^{-1}}^{4(m+1)n}
(\CE_{\tau, \sigma_{4mn}})$ is square-integrable.
And as a representation of $GL_{2n}(\BA) \times \wt{Sp}_{4mn-2n}(\BA)$,
\begin{equation}\label{thm9equ3}
\CC_{N^{4mn+4n}_{2n}}(\CD_{2n, \psi^{-1}}^{4(m+1)n}
(\CE_{\tau, \sigma_{4mn}})) = \gamma_{\psi} \delta_{P^{4mn+2n}_{2n}}^{\frac{1}{2}} 
\lvert \det \rvert^{-m} \tau \otimes \CD_{2n, \psi^{-1}}^{4mn}
(\sigma_{4mn}).
\end{equation}

By \eqref{thm9equ3}, it is easy to see that any non-cuspidal
irreducible subrepresentation of $\CD_{2n, \psi^{-1}}^{4(m+1)n}
(\CE_{\tau, \sigma_{4mn}})$ must be an irreducible subrepresentation of 
$\wt{\CE}_{\tau, \wt{\sigma}_{4(m-1)n+2n}}$,
for some irreducible subrepresentation 
$\wt{\sigma}_{4(m-1)+2n}$ of
$\CD_{2n, \psi^{-1}}^{4mn}(\sigma_{4mn})$.

To prove 
$\CD_{2n, \psi^{-1}}^{4mn}(\sigma_{4mn}) \subset \CN'_{\wt{Sp}_{4(m-1)n+2n}}(\tau, \psi)$,
we need to show that for every irreducible component 
$\sigma$ of $\CD_{2n, \psi^{-1}}^{4mn}(\sigma_{4mn})$,

(1) $\sigma$ is cuspidal;

(2) $\ul{p}(\sigma)=[(2n)^{2m-2}(2n_1)^{s_1}(2n_2)^{s_2} \cdots (2n_k)^{s_k}]$;

(3) $\sigma$ is nearly equivalent to
the residual representation $\wt{\CE}_{\Delta(\tau, m-1) \otimes \wt{\pi}}$;

(4) $\sigma$ has no nonzero Fourier coefficient $FJ_{\psi^{1}_{n-1}}$.

(1) follows easily from the tower property (Theorem 7.10 \cite{GRS11}).
(2) is implied by Lemma 2.6 \cite{GRS03} or Lemma 3.1 \cite{JL13b}.
(3) can be read out easily from the right hand side of 
\eqref{thm10equ2} by \eqref{prop1equ1} of Proposition \ref{prop1}.
Note that the right hand side of \eqref{thm10equ2} is an unramified component of
$\CE_{\Delta(\tau, m)}$, hence unramified component of
$\sigma_{4mn}$. By Theorem 5.2 of \cite{JL13b},
as a cuspidal representation, $\sigma_{4mn}$ has no nonzero Fourier coefficient
with respect to character $\psi_{[(2n)^{2m-1}(2n_1)^{s_1}(2n_2)^{s_2} \cdots (2n_k)^{s_k}],
\ul{a}}$, where $\ul{a}=\{-1, 1\} \cup \ul{a}'$. Now (4) follows easily 
from Lemma 3.1 \cite{JL13b}.

Therefore, the representation space of $ \CD_{2n, \psi^{-1}}^{4(m+1)n}
(\CE_{\tau, \sigma_{4mn}})$ has a non-trivial intersection 
with the representation space of $\wt{\CE}_{\tau, \wt{\sigma}_{4(m-1)n+2n}}$,
for some component $\wt{\sigma}_{4(m-1)n+2n}$ of
$$
\CD_{2n, \psi^{-1}}^{4mn}(\sigma_{4mn}) \subset \CN'_{\wt{Sp}_{4(m-1)n+2n}}(\tau, \psi).
$$

This completes the proof of the theorem.
\end{proof}

{\textbf{\it Proof of Theorem \ref{thm6}.}}

For any $\sigma_{4mn} \in \CN_{{Sp}_{4mn}}(\tau, \psi)$,
by Theorem \ref{thm8}, 
\begin{equation}\label{thm6equ1}
\CD_{2n, \psi^{1}}^{4mn+2n} \circ \CD_{2n, \psi^{-1}}^{4(m+1)n}
(\CE_{\tau, \sigma_{4mn}})=\sigma_{4mn}.
\end{equation}
By Theorem \ref{thm9}, there is an irreducible representation 
$\wt{\sigma}_{4(m-1)+2n}$,
which is a component of
$\CD_{2n, \psi^{-1}}^{4mn}(\sigma_{4mn}) \subset \CN'_{\wt{Sp}_{4(m-1)n+2n}}(\tau, \psi),$
such that the representation space of $ \CD_{2n, \psi^{-1}}^{4(m+1)n}
(\CE_{\tau, \sigma_{4mn}})$ contains an irreducible
subrepresentation $\pi$ of $\wt{\CE}_{\tau, \wt{\sigma}_{4(m-1)n+2n}}$.
Since $\sigma_{4mn}$ is irreducible,
by \eqref{thm6equ1}, 
\begin{equation}\label{thm6equ2}
\sigma_{4mn} = \CD_{2n, \psi^{1}}^{4mn+2n}(\pi) \subset 
\CD_{2n, \psi^{1}}^{4mn+2n}(\wt{\CE}_{\tau, \wt{\sigma}_{4(m-1)n+2n}}).
\end{equation}
Therefore,
\begin{align*}
\CD_{2n, \psi^{-1}}^{4mn}(\sigma_{4mn}) & \subset
\CD_{2n, \psi^{-1}}^{4mn} \circ \CD_{2n, \psi^{+1}}^{4mn+2n}(\wt{\CE}_{\tau, \wt{\sigma}_{4(m-1)n+2n}})\\
& = \wt{\sigma}_{4(m-1)n+2n},
\end{align*}
by Theorem \ref{thm7}.
Hence, $\CD_{2n, \psi^{-1}}^{4mn}(\sigma_{4mn}) = \wt{\sigma}_{4(m-1)n+2n}$,
irreducible as an element in $\CN'_{\wt{Sp}_{4(m-1)n+2n}}(\tau, \psi)$.

This completes the proof of Theorem \ref{thm6},
showing that $\Psi$ is well-defined. \hfill $\square$

\section{Proof of Part (2) of Theorem \ref{thm1}}

In this section, we will prove that $\Psi$ is injective.
For this, we need to assume that for any $\wt{\sigma}_{4(m-1)n+2n} \in \CN'_{\wt{Sp}_{4(m-1)n+2n}}(\tau, \psi)$,
$\wt{\CE}_{\tau, \wt{\sigma}_{4(m-1)n+2n}}$ is irreducible.

For any $\sigma_{4mn} \in \CN_{{Sp}_{4mn}}(\tau, \psi)$,
by Theorem \ref{thm6}, 
$\CD_{2n, \psi^{-1}}^{4mn}(\sigma_{4mn}) = \wt{\sigma}_{4(m-1)n+2n}
\in \CN'_{\wt{Sp}_{4(m-1)n+2n}}(\tau, \psi)$,
which is irreducible. To show $\Psi$ is injective, we only
need to show that $\sigma_{4mn}$ is uniquely determined by 
$\wt{\sigma}_{4(m-1)n+2n}$.

By \eqref{thm6equ2}, $\sigma_{4mn} = \CD_{2n, \psi^{1}}^{4mn+2n}(\pi) \subset 
\CD_{2n, \psi^{1}}^{4mn+2n}(\wt{\CE}_{\tau, \wt{\sigma}_{4(m-1)n+2n}})$,
where $\pi$ is an irreducible
subrepresentation of $\wt{\CE}_{\tau, \wt{\sigma}_{4(m-1)n+2n}}$.
Since we assume that $\wt{\CE}_{\tau, \wt{\sigma}_{4(m-1)n+2n}}$ is irreducible,
we have that $\pi = \wt{\CE}_{\tau, \wt{\sigma}_{4(m-1)n+2n}}$.
Hence $\sigma_{4mn} = 
\CD_{2n, \psi^{1}}^{4mn+2n}(\wt{\CE}_{\tau, \wt{\sigma}_{4(m-1)n+2n}})$,
which means that $\sigma_{4mn}$ is uniquely determined by 
$\wt{\sigma}_{4(m-1)n+2n}$.

This completes the proof of Part (2) of Theorem \ref{thm1}.

\section{Irreducibility of Certain Descent Representations}

In Theorem \ref{main1part2}, for the residual representation 
$\CE_{\Delta(\tau, m)}$, we have proved that 
$\mathfrak{p}^m(\CE_{\Delta(\tau, m)}) = [(2n)^{2m}]$.
From the proof, and by Lemma 2.6 \cite{GRS03} or Lemma 3.1 \cite{JL13b},
we can see that it has a nonzero Fourier coefficient
attached to the partition $[(2n)1^{4mn-2n}]$ with respect
to the character $\psi_{[(2n)1^{4mn-2n}], -1}$. 
In this section, for any number field $F$,
we show that both $\CE_{\Delta(\tau, m)}$
and $\CD^{4mn}_{2n, \psi^{-1}}(\CE_{\Delta(\tau, m)})$
are irreducible. The result can be stated as follows.

\begin{thm}\label{irre}
Assume that $F$ is any number field.

(1) $\CD^{4mn}_{2n, \psi^{-1}}(\CE_{\Delta(\tau, m)})$ is square-integrable and is in the 
discrete spectrum.

(2) Both $\CE_{\Delta(\tau, m)}$ and $\CD^{4mn}_{2n, \psi^{-1}}(\CE_{\Delta(\tau, m)})$
are irreducible.
\end{thm}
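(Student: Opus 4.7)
The plan is to prove the theorem by induction on $m$, taking the base case $m=1$ from Theorem 4.1 of \cite{GJS12} and Theorem 1.1 of \cite{JS03}. The inductive step for Part (1) will be a constant term computation parallel to the one carried out in Theorem \ref{thm9}, followed by Langlands' square-integrability criterion (Lemma \Rmnum{1}.4.11 of \cite{MW95}). Part (2) will then be deduced by matching the resulting constant term with the inducing datum of a specific residual representation of $\wt{Sp}_{4mn-2n}(\BA)$, namely $\wt{\CE}_{\Delta(\tau,m-1)\otimes \wt{\pi}}$, and invoking the uniqueness of residues at the right-most pole.

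For Part (1), take $\varphi \in \CE_{\Delta(\tau,m)}$ and, for $1 \leq r \leq 2mn-n$, expand
\begin{align*}
& \CC_{N_r^{4mn-2n}}(FJ^{\phi}_{\psi^{-1}_{n-1}}(\varphi))\\
=& \sum_{k=0}^r \sum_{\gamma} \int_{L(\BA)} \phi_1(i(\lambda))\, FJ^{\phi_2}_{\psi^{-1}_{n-1+k}}\bigl(\CC_{N_{r-k}^{4mn}}(\varphi)\bigr)(\hat\gamma \lambda \beta)\, d\lambda,
\end{align*}
using Theorem 7.8 of \cite{GRS11}. The cuspidal support of $\CE_{\Delta(\tau,m)}$ kills every term except those with $r-k=2ni$, $1\leq i\leq m$. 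Lemma \ref{constantterm} then identifies the inner constant term as living in $\Delta(\tau,i)|\cdot|^{-(2m-i)/2}\otimes \CE_{\Delta(\tau,m-i)}$. Since $\Delta(\tau,i)$ is non-generic for $i\geq 2$ (Proposition 2.1 of \cite{JL13a}), and since Theorem \ref{main1part2} together with Proposition 6.4 and Remark 6.5 of \cite{JL13c} controls the Fourier coefficient partitions of $\CE_{\Delta(\tau,m-i)}$, the only surviving term has $i=1$ and $k=0$, at $r=2n$. A Jacobian computation combined with the Weil representation formula (1.4) of \cite{GRS11}, exactly as in Theorem \ref{thm9}, shows that this surviving constant term equals
\begin{equation*}
\gamma_\psi\, \delta_{P_{2n}^{4mn-2n}}^{1/2}\, |\det|^{-\tfrac{2m-1}{2}}\, \tau \otimes \CD^{4(m-1)n}_{2n,\psi^{-1}}(\CE_{\Delta(\tau,m-1)}).
\end{equation*}
The exponent $-\tfrac{2m-1}{2}$ satisfies Langlands' strict inequality, yielding square-integrability and membership in the discrete spectrum.

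For Part (2), irreducibility of $\CE_{\Delta(\tau,m)}$ is part of its standard construction as the unique irreducible residual representation at the right-most simple pole (see \cite{MW95, JLZ12}). For the descent, by the inductive hypothesis we may assume $\CD^{4(m-1)n}_{2n,\psi^{-1}}(\CE_{\Delta(\tau,m-1)}) = \wt{\CE}_{\Delta(\tau,m-2)\otimes \wt{\pi}}$ is irreducible (with the base case $m=2$ giving $\wt{\pi}$ itself). The constant term computation above then shows that any irreducible subrepresentation of $\CD^{4mn}_{2n,\psi^{-1}}(\CE_{\Delta(\tau,m)})$ must be a square-integrable automorphic representation whose constant term along $P_{2n}^{4mn-2n}$ matches the inducing datum $\gamma_\psi \tau |\det|^{-(2m-1)/2}\otimes \wt{\CE}_{\Delta(\tau,m-2)\otimes\wt{\pi}}$. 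Combined with the unramified analysis in the proof of Theorem \ref{thm5}, this forces the descent to coincide with the residual representation $\wt{\CE}_{\Delta(\tau,m-1)\otimes \wt{\pi}}$, which has precisely this constant term at its pole $s=m/2$, and which is then seen to be irreducible by the same uniqueness-of-residue argument applied inductively. The hard part will be rigorously pinning down the descent as this specific residual representation rather than merely a nearly equivalent square-integrable form: this requires extending the uniqueness-of-residues argument to the metaplectic setting and carefully matching the normalizations of the Weil factor $\gamma_\psi$ and the modular characters appearing in the two constant terms, a delicate analysis that mirrors but goes beyond the $m=1$ treatment in \cite{GJS12, JS03} because of the nested residual structure.
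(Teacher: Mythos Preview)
Your Part (1) contains a genuine error. The claim that ``the only surviving term has $i=1$ and $k=0$, at $r=2n$'' is false: the non-genericity of $\Delta(\tau,i)$ for $i\geq 2$ is irrelevant, because in the expansion $FJ^{\phi_2}_{\psi^{-1}_{n-1+k}}(\CC_{N_{2ni}^{4mn}}(\varphi))$ the Fourier--Jacobi coefficient is applied to the symplectic factor $\CE_{\Delta(\tau,m-i)}$, not to the $GL$-factor $\Delta(\tau,i)$. In fact the constant term along $P_r^{4mn-2n}$ is nonzero for every $r=2ln$, $1\leq l\leq m-1$. More importantly, even your $r=2n$ term lands in $\tau\otimes \CD^{4(m-1)n}_{2n,\psi^{-1}}(\CE_{\Delta(\tau,m-1)})$, whose second factor is \emph{not cuspidal} for $m\geq 2$; a single exponent $-\tfrac{2m-1}{2}$ is therefore not a cuspidal exponent and cannot be fed directly into Langlands' criterion. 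The paper instead computes the constant term along $P_{2(m-1)n}^{4mn-2n}$, reaching $\gamma_\psi\,\delta^{1/2}|\det|^{-m/2}\,\Delta(\tau,m-1)\otimes \CD^{4n}_{2n,\psi^{-1}}(\CE_{\Delta(\tau,1)})$ with cuspidal second factor $\wt\pi$, and then reads off the full cuspidal exponent $(\tfrac{2-2m}{2},\ldots,-1)$ from $\Delta(\tau,m-1)$.

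Your Part (2) strategy has a more serious gap. First, irreducibility of $\CE_{\Delta(\tau,m)}$ is not simply ``part of its standard construction'': the paper proves it by a local Langlands-quotient argument, writing $\tau_v$ in terms of tempered data with exponents in $(-\tfrac12,\tfrac12)$ and checking the induced exponents are strictly decreasing and positive. Second, and crucially, your inductive scheme for the descent requires identifying $\CD^{4mn}_{2n,\psi^{-1}}(\CE_{\Delta(\tau,m)})$ with the residual representation $\wt{\CE}_{\Delta(\tau,m-1)\otimes\wt\pi}$, and this identification hinges on knowing the latter is irreducible. That irreducibility is \emph{not} established anywhere---the paper itself flags it as a conditional hypothesis in Remark \ref{rmk4}. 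The paper circumvents this entirely by a different mechanism: it goes \emph{up} to $Sp_{4(m+1)n}$, proves the double-descent identities $\CD_{2n,\psi^1}^{4mn+2n}\circ\CD_{2n,\psi^{-1}}^{4(m+1)n}(\CE_{\Delta(\tau,m+1)})=\CE_{\Delta(\tau,m)}$ and $\CD_{2n,\psi^{-1}}^{4mn}\circ\CD_{2n,\psi^1}^{4mn+2n}(\wt{\CE}_{\tau,\pi})=\pi$ for any irreducible component $\pi$ of the descent, and then sandwiches: since $\CD^{4(m+1)n}_{2n,\psi^{-1}}(\CE_{\Delta(\tau,m+1)})$ meets $\wt{\CE}_{\tau,\pi}$, one gets $\CE_{\Delta(\tau,m)}\subseteq \CD_{2n,\psi^1}^{4mn+2n}(\wt{\CE}_{\tau,\pi})$, hence $\CD^{4mn}_{2n,\psi^{-1}}(\CE_{\Delta(\tau,m)})\subseteq\pi$. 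This uses only the already-proved irreducibility of $\CE_{\Delta(\tau,m)}$, not of any metaplectic residue.
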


\begin{proof}
\textbf{Proof of Part (1)}.
As in Theorem \ref{thm9}, we follow the constant term calculation in the proof of the Theorem \ref{thm5}.

Recall that $P^{4mn-2n}_r = M^{4mn-2n}_r N^{4mn-2n}_r$ (with $1 \leq r \leq 2mn-n$) is the standard parabolic subgroup
of ${Sp}_{4mn-2n}$ with Levi part $M^{4mn-2n}_r$ isomorphic to $GL_r \times Sp_{4mn-2n-2r}$,
$N^{4mn-2n}_r$ is the unipotent radical, and
$\wt{P}^{4mn-2n}_r$ is the pre-image of $P^{4mn-2n}_r$ in $\wt{Sp}_{4mn-2n}$.
Take any $\xi \in \CE_{\Delta(\tau, m)}$,
we will calculate the constant term of $FJ^{\phi}_{\psi^{-1}_{n-1}}({\xi})$ along 
$\wt{P}^{4mn-2n}_r$, which is denoted by $\CC_{N^{4mn-2n}_r}(FJ^{\phi}_{\psi^{-1}_{n-1}}({\xi}))$.

By Theorem 7.8 of \cite{GRS11}, 
\begin{align}\label{irreequ3}
\begin{split}
& \CC_{N^{4mn-2n}_r}(FJ^{\phi}_{\psi^{-1}_{n-1}}({\xi}))\\
= & \sum_{k=0}^r \sum_{\gamma \in P^1_{r-k, 1^k}(F) \bs GL_r(F)}
\int_{L(\BA)} \phi_1(i(\lambda)) FJ^{\phi_2}_{\psi^{-1}_{n-1+k}} 
(\CC_{N^{4mn}_{r-k}} ({\xi}))(\hat{\gamma} \lambda \beta) d \lambda,
\end{split}
\end{align}
where $N^{4mn}_{r-k}$ is the unipotent radical of the parabolic 
subgroup $P^{4mn}_{r-k}$ of ${Sp}_{4mn}$ with Levi isomorphic to 
$GL_{r-k} \times Sp_{4mn-2r+2k}$; $P^1_{r-k, 1^k}$ is a subgroup of $GL_r$
consisting of matrices of the form 
$\begin{pmatrix}
g & x\\
0 & z
\end{pmatrix}$, with $z \in U_k$, the standard maximal unipotent subgroup of $GL_k$;
for $a \in GL_j$, $j \leq 2mn$, $\hat{a}=\diag(a, I_{4mn-2j}, a^*)$;
$L$ is a unipotent subgroup, consisting of matrices of the form
$\lambda = \begin{pmatrix}
I_r & 0\\
x & I_n
\end{pmatrix}^{\wedge}$, and $i(\lambda)$ is the last row of $x$;
$\beta=\begin{pmatrix}
0 & I_r\\
I_n & 0
\end{pmatrix}^{\wedge}$; $\phi = \phi_1 \otimes \phi_2$, with 
$\phi_1 \in \CS(\BA^r)$, $\phi_2 \in \CS(\BA^{2mn-n-r})$;
$$FJ^{\phi_2}_{\psi^{-1}_{n-1+k}} 
(\CC_{N^{4mn}_{r-k}} ({\xi}))(\hat{\gamma} \lambda \beta):=
FJ^{\phi_2}_{\psi^{-1}_{n-1+k}} 
(\CC_{N^{4mn}_{r-k}} (\rho(\hat{\gamma} \lambda \beta){\xi}))(I),$$
with $\rho(\hat{\gamma} \lambda \beta)$ denoting the right translation by $\hat{\gamma} \lambda \beta$;
$\CC_{N^{4mn}_{r-k}} (\rho(\hat{\gamma} \lambda \beta){\xi})$ is restricted to ${Sp}_{4mn-2r+2k}(\BA)$,
then we apply the Fourier-Jacobi coefficient $FJ^{\phi_2}_{\psi^{-1}_{n-1+k}}$, 
taking automorphic forms on ${Sp}_{4mn-2r+2k}(\BA)$
to $\wt{Sp}_{4mn-2n-2r}(\BA)$.

By the cuspidal support of ${\xi}$, 
$\CC_{N^{4mn}_{r-k}} ({\xi})$ is identically zero, unless $r=k$
or $r-k = 2ln$, $1 \leq l \leq m-1$. When $r=k$, the corresponding term is zero, because
$FJ^{\phi_2}_{\psi^{-1}_{n-1+r}} 
({\xi})$ is zero, by Theorem \ref{thm10}.
When $r-k=2ln$, $1 \leq l \leq m-1$, $FJ^{\phi_2}_{\psi^{-1}_{n-1+k}} 
(\CC_{N^{4mn}_{r-k}} ({\xi}))$ is not zero for $k=0$,
and is identically zero
for $1 \leq k \leq r$,
because $\mathfrak{p}^m(\CE_{\Delta(\tau, m)}) = [(2n)^{2m}]$.

Therefore, $\CC_{N^{4mn-2n}_r}(FJ^{\phi}_{\psi^{-1}_{n-1}}({\xi})) \neq 0$,
only for $r = 2ln$, $1 \leq l \leq m-1$.
And for $1 \leq l \leq m-1$,
\begin{align}\label{irreequ4}
\begin{split}
& \CC_{N^{4mn-2n}_{2ln}}(FJ^{\phi}_{\psi^{-1}_{n-1}}({\xi}))\\
= & \int_{L(\BA)} \phi_1(i(\lambda)) FJ^{\phi_2}_{\psi^{-1}_{n-1}} 
(\CC_{N^{4mn}_{2ln}} ({\xi}))(\lambda \beta) d \lambda.
\end{split}
\end{align}

To prove square-integrability of $\CD^{4mn}_{2n, \psi^{-1}}(\CE_{\Delta(\tau, m)})$, it turns out we only need to consider 
$r=2(m-1)n$, which will be clear from the following discussion.

For $r = 2(m-1)n$,
\begin{align}\label{irreequ5}
\begin{split}
& \CC_{N^{4mn-2n}_{2(m-1)n}}(FJ^{\phi}_{\psi^{-1}_{n-1}}({\xi}))\\
= & \int_{L(\BA)} \phi_1(i(\lambda)) FJ^{\phi_2}_{\psi^{-1}_{n-1}} 
(\CC_{N^{4mn}_{2(m-1)n}} ({\xi}))(\lambda \beta) d \lambda.
\end{split}
\end{align}
By Lemma \ref{constantterm}, 
when restricted to $GL_{2(m-1)n}(\BA) \times Sp_{4n}(\BA)$, 
$$
\CC_{N^{4mn}_{2(m-1)n}} ({\xi}) \in \delta_{P^{4mn}_{2(m-1)n}}^{\frac{1}{2}}
\lvert \det \rvert^{-\frac{m+1}{2}} \Delta(\tau,m-1) \otimes \CE_{\Delta(\tau,1)}.
$$

As in the proof of Theorem 2.5 \cite{GJS12}, to calculate 
the automorphic exponent attached to this non-trivial constant term
(for definition see \Rmnum{1}.3.3 \cite{MW95}),
we need to consider the action of 
$$\hat{g}=\diag(g, I_{2n}, g^*) \in GL_{2(m-1)n}(\BA) \times \wt{Sp}_{2n}(\BA).$$
Since $r=2(m-1)n$, $\beta=\begin{pmatrix}
0 & I_{2(m-1)n}\\
I_n & 0
\end{pmatrix}^{\wedge}$. 
$\beta \diag(I_n, \hat{g}, I_n) \beta^{-1} = \diag(g, I_{4n}, g^*) =: \wt{g}$.
Then changing variables in
\eqref{irreequ5} $\lambda \mapsto \wt{g} \lambda \wt{g}^{-1}$ will give 
a Jacobian $\lvert \det(g) \rvert^{-n}$. 
On the other hand, by Formula (1.4) \cite{GRS11}, 
the action of $\hat{g}$ on $\phi_1$ gives $\gamma_{\psi}(\det(g)) \lvert \det(g) \rvert^{\frac{1}{2}}$. Therefore, the $\hat{g}$ acts by $\tau(g)$ with 
character 
\begin{align*}
& \delta_{P^{4mn}_{2(m-1)n}}^{\frac{1}{2}}
\lvert \det(g) \rvert^{-\frac{m+1}{2}} \lvert \det(g) \rvert^{-n}
\gamma_{\psi}(\det(g)) \lvert \det(g) \rvert^{\frac{1}{2}}\\
= & \gamma_{\psi}(\det(g)) \delta_{P^{4mn-2n}_{2(m-1)n}}^{\frac{1}{2}} 
\lvert \det(g) \rvert^{-\frac{m}{2}}.
\end{align*}

Therefore, as a function on $GL_{2(m-1)n}(\BA) \times \wt{Sp}_{2n}(\BA)$,
\begin{align}\label{irreequ6}
\begin{split}
& \CC_{N^{4mn-2n}_{2(m-1)n}}(FJ^{\phi}_{\psi^{-1}_{n-1}}({\xi}))\\
\in &
\gamma_{\psi} \delta_{P^{4mn-2n}_{2(m-1)n}}^{\frac{1}{2}} 
\lvert \det \rvert^{-\frac{m}{2}} \Delta(\tau,m-1) \otimes \CD^{4n}_{2n,\psi^{-1}}(\CE_{\Delta(\tau,1)}).
\end{split}
\end{align}
By Theorem 2.3 \cite{GJS12}, we know that 
$\CD^{4n}_{2n,\psi^{-1}}(\CE_{\Delta(\tau,1)})$ is an irreducible, genuine, $\psi$-generic, cuspidal automorphic representation of $\wt{Sp}_{2n}(\BA)$, which lifts to $\tau$ with respect to $\psi$.
Hence, as a representation of $GL_{2(m-1)n}(\BA) \times \wt{Sp}_{2n}(\BA)$,
\begin{align}\label{irreequ7}
\begin{split}
& \CC_{N^{4mn-2n}_{2(m-1)n}}(\CD^{4mn}_{2n, \psi^{-1}}(\CE_{\Delta(\tau, m)}))\\
= &
\gamma_{\psi} \delta_{P^{4mn-2n}_{2(m-1)n}}^{\frac{1}{2}} 
\lvert \det \rvert^{-\frac{m}{2}} \Delta(\tau,m-1) \otimes \CD^{4n}_{2n,\psi^{-1}}(\CE_{\Delta(\tau,1)}).
\end{split}
\end{align}

Since, the cuspidal exponent of $\Delta(\tau,m-1)$ is 
$\{(\frac{2-m}{2}, \frac{4-m}{2}, \ldots, \frac{m-2}{2})\}$,
the cuspidal exponent of 
$\CC_{N^{4mn-2n}_{2(m-1)n}}(FJ^{\phi}_{\psi^{-1}_{n-1}}({\xi}))$
is 
$\{(\frac{2-2m}{2}, \frac{4-2m}{2}, \ldots, -1)\}$.
Hence, by Langlands square-integrability criterion \cite[Lemma \Rmnum{1}.4.11]{MW95},
the automorphic 
representation $\CD^{4mn}_{2n, \psi^{-1}}(\CE_{\Delta(\tau, m)})$ is square-integrable
and is in the discrete spectrum.

This completes the proof of Part (1).

\textbf{Proof of Part (2)}.

The proof of irreducibility of $\CE_{\Delta(\tau, m)}$ is similar to that of $\CE_{\Delta(\tau, 1)}$ which is given on Page 982 of \cite{GJS12} and in the proof of Theorem 2.1 of \cite{GRS11}. To show the square-integrable residual representation $\CE_{\Delta(\tau, m)}$ is irreducible, it suffices to show that at each local place $v$,
\begin{equation} \label{equirr1}
\Ind_{P_{2mn}(F_v)}^{Sp_{2mn}(F_v)} \Delta(\tau_v, m) \lvert \cdot \rvert^{\frac{m}{2}}
\end{equation}
has a unique quotient.
Since $\Delta(\tau_v, m)$ is the unique quotient of the following induced representation
\begin{equation*}
\Ind_{Q_{(2n)^m}(F_v)}^{GL_{2mn}(F_v)}\tau_v \lvert \cdot \rvert^{\frac{m-1}{2}} \otimes \tau_v \lvert \cdot \rvert^{\frac{m-3}{2}} \otimes \cdots \otimes \tau_v \lvert \cdot \rvert^{\frac{1-m}{2}},
\end{equation*}
where $Q_{(2n)^m}$ is the parabolic subgroup of $GL_{2mn}$ with Levi subgroup isomorphic to $GL_{2n}^m$.
We just have to show that the following induced representation has a unique quotient
\begin{equation}\label{equirr2}
\Ind_{P_{(2n)^m}(F_v)}^{Sp_{4mn}(F_v)} \tau_v \lvert \cdot \rvert^{\frac{2m-1}{2}} \otimes \tau_v \lvert \cdot \rvert^{\frac{2m-3}{2}} \otimes \cdots \otimes \tau_v \lvert \cdot \rvert^{\frac{1}{2}},
\end{equation}
where $P_{(2n)^m}$ is the parabolic subgroup of $Sp_{4mn}$ with Levi subgroup isomorphic to $GL_{2n}^m$.

Since $\tau_v$ is generic and unitary, by \cite{T86} and \cite{V86}, $\tau_v$ is full parabolic induction from its Langlands data with exponents in the open interval $(-\frac{1}{2}, \frac{1}{2})$. Explicitly, we can assume that 
$$\tau_v \cong \rho_1 \lvert \cdot \rvert^{\alpha_1} \times \rho_2 \lvert \cdot \rvert^{\alpha_2} \times \cdots \times \rho_r \lvert \cdot \rvert^{\alpha_r},$$
where $\rho_i$'s are tempered representations, $\alpha_i \in \BR$ and $\frac{1}{2} > \alpha_1 > \alpha_2 > \cdots > \alpha_r > -\frac{1}{2}$.
Therefore, the induced representation in \eqref{equirr2} can be written as follows:
\begin{align*}
& \rho_1 \lvert \cdot \rvert^{\frac{2m-1}{2}+ \alpha_1} \times \rho_2 \lvert \cdot \rvert^{\frac{2m-1}{2}+\alpha_2} \times \cdots \times \rho_r \lvert \cdot \rvert^{\frac{2m-1}{2}+\alpha_r}\\
\times & \rho_1 \lvert \cdot \rvert^{\frac{2m-3}{2}+ \alpha_1} \times \rho_2 \lvert \cdot \rvert^{\frac{2m-3}{2}+\alpha_2} \times \cdots \times \rho_r \lvert \cdot \rvert^{\frac{2m-3}{2}+\alpha_r}\\
\times & \cdots\\
\times & \rho_1 \lvert \cdot \rvert^{\frac{1}{2}+ \alpha_1} \times \rho_2 \lvert \cdot \rvert^{\frac{1}{2}+\alpha_2} \times \cdots \times \rho_r \lvert \cdot \rvert^{\frac{1}{2}+\alpha_r} \rtimes 1_{Sp_0}.
\end{align*}
Since $\alpha_i \in \BR$ and $\frac{1}{2} > \alpha_1 > \alpha_2 > \cdots > \alpha_r > -\frac{1}{2}$, we can easily see that 
the exponents satisfy
\begin{align*}
& \frac{2m-1}{2}+ \alpha_1 > \frac{2m-1}{2}+\alpha_2 > \cdots > \frac{2m-1}{2}+\alpha_r \\
> & \frac{2m-3}{2}+ \alpha_1 > \frac{2m-3}{2}+\alpha_2 > \cdots > \frac{2m-3}{2}+\alpha_r \\
> & \cdots \\
> & \frac{1}{2}+ \alpha_1 > \frac{1}{2}+\alpha_2 > \cdots > \frac{1}{2}+\alpha_r > 0.
\end{align*}
By Langlands classification, it is easy to see that the induced representation in \eqref{equirr2} has a unique quotient which is the Langlands quotient. 

This completes the proof of irreducibility of $\CE_{\Delta(\tau, m)}$.

The proof of irreducibility of $\CD^{4mn}_{2n, \psi^{-1}}(\CE_{\Delta(\tau, m)})$ is similar to that in Theorem \ref{thm6}. We just sketch all the steps needed.

Recall that $P^{4mn+4n}_{2n} = M^{4mn+4n}_{2n}N^{4mn+4n}_{2n}$ is the parabolic subgroup of $Sp_{4mn+4n}$ with Levi subgroup $M^{4mn+4n}_{2n}$ isomorphic to
$GL_{2n} \times Sp_{4mn}$. 
For any $\phi \in A(N^{4mn+4n}_{2n}(BA)M^{4mn+4n}_{2n}(F) \bs Sp_{4mn+4n}(\BA)_{\tau \otimes \CE_{\Delta(\tau, m)}}$,
the corresponding Eisenstein series defined as follows has a pole at $s=\frac{m+1}{2}$:
$$
E(\phi,s)(g)=\sum_{\gamma\in P^{4mn+4n}_{2n}(F)\bks Sp_{4mn+4n}(F)}\lambda_s \phi(\gamma g).
$$
The resulting residual representation generated by all the residues is actually $\CE_{\Delta(\tau, m+1)}$.

Then, by a similar argument as in the proof of Theorem \ref{thm7}, we get that 
\begin{align}\label{irreequ1}
\begin{split}
\CD_{2n, \psi^{1}}^{4mn+2n} \circ \CD_{2n, \psi^{-1}}^{4(m+1)n}
(\CE_{\Delta(\tau, m+1)}) & \neq 0,\\
\CD_{2n, \psi^{1}}^{4mn+2n} \circ \CD_{2n, \psi^{-1}}^{4(m+1)n}
(\CE_{\Delta(\tau, m+1)})& =\CE_{\Delta(\tau, m)}.
\end{split}
\end{align}

Note that, as indicated at the end of the proof of Theorem \ref{thm7}, the irreducibility of $\CE_{\Delta(\tau, m)}$ plays an essential role in proving the equality in \eqref{irreequ1}.

From Part (1), we see that $\CD^{4mn}_{2n, \psi^1}(\CE_{\Delta(\tau, m)})$ is square-integrable and is in the 
discrete spectrum. For any irreducible component $\pi$ of $\CD^{4mn}_{2n, \psi^1}(\CE_{\Delta(\tau, m)})$,
for any $\phi \in A(N^{4mn+2n}_{2n}(\BA)\wt{M}^{4mn+2n}_{2n}(F) \bs \wt{Sp}_{4mn+2n}(\BA))_{\mu_{\psi}\tau \otimes \pi}$,
the corresponding Eisenstein series defined as follows has a pole at $s=m$. Denote the residual representation generated by all the residues by $\wt{\CE}_{\tau, \pi}$.

Since $\pi$ is irreducible,
also by a similar argument as in the proof of Theorem \ref{thm7}, we get that 
\begin{align}\label{irreequ2}
\begin{split}
\CD_{2n, \psi^{-1}}^{4mn} \circ \CD_{2n, \psi^{1}}^{4mn+2n}
(\wt{\CE}_{\tau, \pi}) & \neq 0,\\
\CD_{2n, \psi^{-1}}^{4mn} \circ \CD_{2n, \psi^{1}}^{4mn+2n}
(\wt{\CE}_{\tau, \pi})& =\pi.
\end{split}
\end{align}

Then, using a similar argument as in the proof of Theorem \ref{thm9}, we have that
there is an irreducible component $\pi$ of
$\CD^{4mn}_{2n, \psi^1}(\CE_{\Delta(\tau, m)})$,
such that the representation space of $\CD^{4mn+4n}_{2n, \psi^1}(\CE_{\Delta(\tau, m+1)})$ has a non-trivial intersection 
with the representation space of $\wt{\CE}_{\tau, \pi}$.
Let $\pi'$ be an irreducible subrepresentation of $\wt{\CE}_{\tau,\pi}$ which is in this intersection.

Since $\CE_{\Delta(\tau, m)}$ is irreducible, by the identity in \eqref{irreequ1} we have
$\CE_{\Delta(\tau, m)} = \CD_{2n, \psi^{1}}^{4mn+2n} (\pi') \subseteq \CD_{2n, \psi^{1}}^{4mn+2n} (\wt{\CE}_{\tau, \pi})$.
Therefore, 
$$\CD_{2n, \psi^{-1}}^{4mn}(\CE_{\Delta(\tau, m)}) \subseteq
\CD_{2n, \psi^{-1}}^{4mn} \circ \CD_{2n, \psi^{1}}^{4mn+2n}
(\wt{\CE}_{\tau, \pi})=\pi,$$
by \eqref{irreequ2}.
Hence, $\CD_{2n, \psi^{-1}}^{4mn}(\CE_{\Delta(\tau, m)}) = \pi$, irreducible.

This completes the proof of the theorem.
\end{proof}

\begin{rmk}\label{rmk4}
Write $\wt{\pi} = \CD^{4n}_{2n,\psi^{-1}}(\CE_{\Delta(\tau,1)})$.
For 
$$\wt{\phi} \in A(N^{4mn-2n}_{2(m-1)n}(\BA) \wt{M}^{4mn-2n}_{2(m-1)n}\bs \wt{Sp}_{4mn-2n}(\BA))_{\mu_{\psi} \Delta(\tau,m-1) \otimes \wt{\pi}},$$
it is easy to see that
the corresponding Eisenstein series has a simple pole at $\frac{m}{2}$. 
Denote the residual representation by 
$\wt{\CE}_{\Delta(\tau,m-1), \wt{\pi}}$.

From the proof of Part (1) of Theorem \ref{irre}
(in particular, \eqref{irreequ7}), it is easy to see that if 
the residual representation 
$\wt{\CE}_{\Delta(\tau,m-1), \wt{\pi}}$ is 
irreducible, then actually we have proved that
$\CD^{4mn}_{2n, \psi^1}(\CE_{\Delta(\tau, m)}) = \wt{\CE}_{\Delta(\tau,m-1), \wt{\pi}}$.
And, with the assumption that $\wt{\CE}_{\Delta(\tau,m-1), \wt{\pi}}$ is 
irreducible, using similar argument as that in Theorem \ref{irre}, we can also prove that 
$\CD^{4mn-2n}_{2n, \psi^{-1}}(\wt{\CE}_{\Delta(\tau,m-1), \wt{\pi}})$ is irreducible, square-integrable and is in the discrete spectrum. Furthermore, since $\CE_{\Delta(\tau, m-1)}$
is also irreducible by Theorem \ref{irre}, we actually have
$$\CD^{4mn-2n}_{2n, \psi^{-1}}(\wt{\CE}_{\Delta(\tau,m-1), \wt{\pi}}) = \CE_{\Delta(\tau, m-1)}.$$
\end{rmk}

\end{document}